\numberwithin{equation}{section}
\theoremstyle{plain}
\newtheorem{thm}{Theorem}[section]
\newtheorem{lem}{Lemma}[section]
\newtheorem{dfn}{Definition}[section]
\newtheorem{prp}{Proposition}[section]
\newtheorem{rmk}{Remark}[section]
\newtheorem{cor}{Corollary}[section]
\begin{document}
\newcommand{\Q}{\mathbb{Q}}
\newcommand{\R}{\mathbb{R}}
\newcommand{\I}{\mathbb{I}}
\newcommand{\Z}{\mathbb{Z}}
\newcommand{\N}{\mathbb{N}}
\newcommand{\F}{\mathcal{F}}
\newcommand{\p}{\mathbf{P}}
\newcommand{\B}{\mathcal{B}}
\newcommand{\h}{\mathcal{H}}
\newcommand{\M}{\mathcal{M}}
\newcommand{\E}{\mathbf{E}}
\newcommand{\Cf}{\mathrm{C}}
\newcommand{\En}{\mathrm{E}^n}
\newcommand{\eps}{\varepsilon}
\newcommand{\supp}{\mathrm{supp\,}}
\newcommand{\law}{\mathrm{Law}}
\newcommand{\Leb}{\mathrm{Leb}}
\newcommand{\sgn}{\mathrm{sgn\,}}
\newcommand{\inter}{\mathrm{int\,}}

\newcommand{\col}{}
\newcommand{\cob}{}
\newcommand{\cog}{}

\begin{frontmatter}

\begin{aug}
\title{A system of coalescing heavy diffusion particles on the real line}
\runtitle{A system of heavy diffusion particles}

\author{\fnms{Vitalii} \snm{Konarovskyi}\corref{}\ead[label=e1]{konarovskiy@gmail.com}}
\address{Max Planck Institut f\"{u}r Mathematik\\
in den Naturwissenschaften,\\
Inselstraße 22,\\
04103 Leipzig,\\
Germany\\
and\\
Department of Mathematics and Informatics,\\
Yuriy Fedkovych Chernivtsi
National University,\\
Kotsubinsky Str. 2,\\
58012 Chernivtsi,\\
Ukraine\\
\printead{e1}}
\affiliation{Max Planck Institut f\"{u}r Mathematik in den Naturwissenschaften and\\
Yuriy Fedkovych Chernivtsi National University}

\runauthor{V. Konarovskyi}

\end{aug}

\begin{abstract}
We construct a modified Arratia flow with mass and energy conservation. We suppose that particles have a mass obeying the conservation law, and their diffusion is inversely proportional to the mass. Our main result asserts that such a system exists under the assumption of the uniform mass distribution on an interval at the starting moment. We introduce a stochastic integral with respect to such a flow and obtain the total local time as the density of the occupation measure for all particles.
\end{abstract}

\begin{keyword}[class=MSC]
\kwd[Primary ]{60K35}
\kwd{82B21}
\kwd[; secondary ]{60H05}
\kwd{60J55}
\end{keyword}

\begin{keyword}
\kwd{Interacting particle system}
\kwd{Arratia flow}
\kwd{coalescing}
\kwd{stochastic integral with respect to flow}
\kwd{Ito formula}
\kwd{local time}
\end{keyword}

\end{frontmatter}


\section{Introduction and statement of main results}

The paper is devoted to a model of interacting diffusion particles on the real line. {\cog Intuitively the new model can be understood as follows.} The particles start from all points of a fixed interval (for convenience, we consider the interval $[0,1]$), move independently up to the {\cog meeting time then coalesce and stay together.} {\cog Each particle carries a mass and when two particles coalesce the resulting particle carries the mass of both. This implies the mass conservation, as for example, in~\cite{Smoluchowski:1916,Lang:1980,Weinan:1996}.} In addition, we suppose that the diffusion {\cog rate of each} particle is inversely proportional to its mass. {\cog This is a new feature of our model which is not present in the classical ones. While this new mechanism makes the model physically more realistic it also makes its investigation more complicated. It should be noted that the dependence of the diffusion on the mass distinguishes our model from those that are actively investigated such as the well-known Arratia flow~\cite{Arratia:1979,Arratia:1981,Dorogovtsev:2004,Norris:2015}, where every subsystem can be described as a separate system. This fact facilitates the study of fine properties of the Arratia flow as in~\cite{Le_Jan:2004,Evans:2013,Dorogovtsev:2006,Dorogovtsev:2007:en,Dorogovtsev:2005,Malovichko:2009,Chernega:2012,Shamov:2011,Vovchanskii:2012}.}

Systems of interacting particles with a mass or measure-valued processes corresponding to them arise in statistical mechanics, where particles are interpreted as molecules of gas or liquid, in genetics, where the phase space is a space of possible genotypes and the mass of a particle corresponds to the share of individuals of a population that have some genotype, in hydrodynamics and cosmology, where the mass is interpreted as a naturally physical mass of molecules of liquid or gas, in turbulence theory, where particles are interpreted as curls and the mass corresponds to circulation. Such models of particles with masses were studied by M.~V.~Smoluchowski~\cite{Smoluchowski:1916}, R.~Lang~\cite{Lang:1980}, D.~A.~Dawson~\cite{Dawson:2001,Dawson:2004}, H.~Wang~\cite{Wang:1997,Wang:1998}, W.~H.~Fleming~\cite{Fleming:1979}, L.~G.~Gorostiza~\cite{Gorostiza:1990,Gorostiza:1991}, J.~R.~Norris~\cite{Norris:1999}, Ya.~G.~Sinai~\cite{Weinan:1996}, A.~A.~Dorogovtsev~\cite{Kotelenez:1997,Dorogovtsev:2002,Dorogovtsev:2003}, M.~P.~Karlikova~\cite{Karlikova:2005}, Konarovskyi~V.~V.~\cite{Konarovskiy:2010:TVP:en,Konarovskiy:2010:UMJ:en,Konarovskiy:2011:TSP,Konarovskyi:2013:COSA,Konarovskyi:2014:TSP} and others. {\cog  In some models, such as those studied in~\cite{Smoluchowski:1916,Lang:1980,Weinan:1996} the mass influences the motion of the particles, but in many other cases this does not happen~\cite{Dawson:2001,Dawson:2004,Wang:1997,Wang:1998}.}

{\cog The author has already investigated in~\cite{Konarovskiy:2010:TVP:en,Konarovskiy:2010:UMJ:en,Konarovskyi:2013:COSA} simpler discrete models where the diffusion rate is inversely proportional to the mass.} {\col In these papers a countable system of particles started with positive mass and the mass of the whole system was infinite. The difference in this work is the assumption that the particles start from all points of an interval with ``infinitesimal mass''.} {\cog We prove existence for such model of particles, which is a delicate issue due to the fact that particles start with zero mass and therefore infinite diffusion rate. The main reason for existence is the fact that particles coalesce immediately to a finite set of points. Consequently, the particles have a finite diffusion rate at any positive time.} Since we must simultaneously consider an uncountable number of particles, another important question is the method of defining the system of processes describing the evolution of the particles. To do this, we use a martingale approach. We construct a continuum of martingales that satisfy certain properties characterizing our model. {\col Let $\Leb$ denote the Lebesgue measure on $[0,1]$.} {\cog The following theorem is the main result of the paper.}

\begin{thm}\label{theorem_main_result}
There exists an random element $\{y(u,t),\ u\in[0,1],\ t\in[0,T]\}$ in the Skorohod space $D([0,1],C[0,T])$ such that
\begin{enumerate}
\item[(C1)] for all $u\in[0,1]$, the process $y(u,\cdot)$ is a continuous square integrable martingale with respect to the filtration
\begin{equation}\label{f_filtration_y}
\mathcal{F}_t=\sigma(y(u,s),\ u\in[0,1],\ s\leq t),\quad t\in[0,T];
\end{equation}

\item[(C2)] for all $u\in[0,1]$, $y(u,0)=u$;

\item[(C3)] for all $u<v$ from $[0,1]$ and $t\in[0,T]$, $y(u,t)\leq y(v,t)$;

\item[(C4)] for all $u\in[0,1]$, the {\col quadratic variation} has the form
$$
\langle y(u,\cdot)\rangle_t=\int_0^t\frac{ds}{m(u,s)},
$$
where $m(u,t)=\Leb\{v:\ \exists s\leq t\ y(v,s)=y(u,s)\}$, $t\in[0,T]$;

\item[(C5)]  for all $u,v\in[0,1]$ and $t\in[0,T]$,
$$
\langle y(u,\cdot),y(v,\cdot)\rangle_{t\wedge\tau_{u,v}}=0,
$$
where $\tau_{u,v}=\inf\{t:\ y(u,t)=y(v,t)\}\wedge T$.
\end{enumerate}
\end{thm}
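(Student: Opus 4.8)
\section*{Proof proposal}

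The plan is to obtain $y$ as a subsequential weak limit of finite particle systems. For $n\in\N$ let $y_n(\cdot,t)$ describe $n$ particles placed on a uniform grid of $[0,1]$, each carrying mass $\tfrac1n$, each performing, before meeting, a Brownian motion with diffusion rate $n$, and coalescing on contact with additive mass; thus a cluster consisting of $j$ of the original particles has mass $\tfrac jn$ and moves as a Brownian motion with rate $\tfrac nj$. Such a coalescing system is well defined --- it is the discrete model already studied in \cite{Konarovskiy:2010:TVP:en,Konarovskiy:2010:UMJ:en,Konarovskyi:2013:COSA}, and it can be built directly by induction over the successive coalescence times. Setting $y_n(u,t)$ to be the position at time $t$ of the particle started at $\lceil nu\rceil/n$, we get $y_n\in D([0,1],C[0,T])$, and the discrete analogues of (C1)--(C5) hold with $m(u,t)$ replaced by $m_n(u,t)=\Leb\{v:\ \exists s\le t\ y_n(v,s)=y_n(u,s)\}$.

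Next I would establish the uniform-in-$n$ a priori estimates on which everything rests. The governing heuristic is that when about $N$ clusters of mass $\tfrac1N$ remain, they diffuse at rate $N$ over the unit interval and therefore meet in time of order $N^{-3}$; hence at time $t$ there are of order $t^{-1/3}$ clusters and $m_n(u,t)$ is, typically, of order at least $t^{1/3}$. Made rigorous, this should give a bound $\E\big[1/m_n(u,t)\big]\le\phi(t)$ with $\phi$ integrable near $0$ (one expects $\phi(t)=Ct^{-1/3}$), uniformly in $n$ and $u$, together with a non-concentration estimate: $\sup_u m_n(u,t)\to 0$ as $t\to0$, uniformly in $n$ and in probability. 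From $\langle y_n(u,\cdot)\rangle_t=\int_0^t \frac{ds}{m_n(u,s)}$ one then gets $\sup_n\E\langle y_n(u,\cdot)\rangle_T\le\int_0^T\phi(s)\,ds<\infty$, hence uniform $L^2$ bounds and uniform integrability of $\{y_n(u,t)\}_n$ and $\{y_n(u,t)^2\}_n$; the same bound yields, via the Kolmogorov/Aldous criterion, tightness of $\{y_n(u,\cdot)\}_n$ in $C[0,T]$ for each $u$, while the non-concentration estimate, together with the monotonicity of $y_n(\cdot,t)$, controls the Skorohod modulus in the $u$-variable and rules out macroscopic jumps. Combining these, $\{y_n\}_n$ is tight in $D([0,1],C[0,T])$; passing to a subsequence and invoking the Skorohod representation theorem, realise $y_n\to y$ almost surely on a common probability space.

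It remains to verify (C1)--(C5) for $y$. Properties (C2) and (C3) pass to the limit at once ($\lceil nu\rceil/n\to u$ and the ordering is a closed condition). For (C1), given bounded continuous $\Phi$ measurable with respect to the data up to time $s$, one passes to the limit in $\E[(y_n(u,t)-y_n(u,s))\Phi]=0$ using the uniform integrability above, obtaining that $y(u,\cdot)$ is an $\F_t$-martingale, square integrable by the $L^2$ bound. The heart of the matter is (C4) and (C5): one must show that the discrete mass functions converge, $m_n(u,s)\to m(u,s)$, in a sense strong enough that $\int_0^t \frac{ds}{m_n(u,s)}\to\int_0^t \frac{ds}{m(u,s)}$, and then identify the brackets by passing to the limit in the martingale property of $y_n(u,t)^2-\int_0^t \frac{ds}{m_n(u,s)}$, and --- exploiting the $n$-level independence of distinct clusters up to their meeting time --- in $y_n(u,t)y_n(v,t)$ stopped at $\tau_{u,v}$ for (C5).

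The principal obstacle is precisely this last identification: controlling the coalescence structure of the limit. Lower semicontinuity of the mass along the limit (clusters can only merge, not split, in the limit) is comparatively easy; the difficulty is to exclude spurious extra coalescence in the limit and any loss of mass, i.e.\ to prove the matching bound $\limsup_n m_n(u,t)\le m(u,t)$. This is where the non-concentration estimate $\sup_u m_n(u,t)\to0$ as $t\to0$ and the (C5)-type independence before meeting must be used carefully: one has to rule out that two families of particles which stay macroscopically separated in the approximations are forced together in the limit, and that a set of labels of positive Lebesgue measure collapses onto a Lebesgue-null set of trajectories. Carrying this through, while keeping all the martingale limit transitions legitimate via the uniform $L^2$ and uniform integrability bounds, is the main work of the proof.
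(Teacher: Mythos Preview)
Your overall strategy --- finite coalescing systems with mass $1/n$, tightness, Skorohod representation, then identify the bracket --- is the paper's strategy. However, several of the technical steps you outline diverge from what the paper actually does, and at least two of them are genuine gaps as stated.

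First, you aim for tightness directly in $D([0,1],C[0,T])$, resting everything on a uniform-in-$n$ bound $\E[1/m_n(u,t)]\le\phi(t)$ with $\phi$ integrable near $0$. The paper explicitly does \emph{not} attempt this: because $m_n(u,0)=1/n$ the diffusion rate at $t=0$ is $n$, and the author states that one ``cannot talk about tightness on the whole time interval $[0,T]$''. Instead the paper proves tightness only in $D([0,1],C(0,T])$ (Proposition~\ref{prop_tightness_in_D_eps} for each $[\varepsilon,T]$, then Proposition~\ref{prop_tightness_apend}), and afterwards, by a completely separate argument (Proposition~\ref{prop_tight_xi} plus Lemma~\ref{lemma_expectation_of_z}), shows that the limit extends continuously to $t=0$ by proving that $\int_0^1\varphi(y(u,t))\,du\to\int_0^1\varphi(u)\,du$. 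The mass estimate you want (Lemma~\ref{lemma_estim_exspect_m}, with exponent $t^{-1/2}$ rather than your conjectured $t^{-1/3}$) is proved only for the \emph{limit} $y$, after it has been constructed, using (C2)--(C5); it is not used to obtain tightness. Your proposal leaves ``made rigorous, this should give\ldots'' precisely at the point the paper found hard enough to circumvent.

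Second, your mechanism for the Skorohod modulus in $u$ --- a ``non-concentration'' bound $\sup_u m_n(u,t)\to 0$ plus monotonicity --- does not do the job: small masses mean many clusters, but say nothing about the spatial gaps between neighbouring clusters, which are what produce jumps of $y_n(\cdot,t)$. The paper controls the $u$-modulus by a direct three-point supermartingale estimate (Lemma~\ref{lemma_three_points}): for $u-h<u<u+h$ the product $(y_n(u+h,\cdot)-y_n(u,\cdot))(y_n(u,\cdot)-y_n(u-h,\cdot))$, suitably stopped, is a nonnegative supermartingale, yielding $\p\{\|y_n(u\pm h,\cdot)-y_n(u,\cdot)\|\ge\lambda\ \text{on both sides}\}\le Ch^2/\lambda^2$. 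This, together with Lemma~\ref{lemma_first_points}, is what feeds into the Ethier--Kurtz tightness criterion.

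Finally, for (C4)--(C5) you correctly isolate the obstacle (ruling out spurious coalescence in the limit), but your plan of proving $\limsup_n m_n(u,t)\le m(u,t)$ directly is harder than what the paper does. The paper instead shows convergence of the pairwise meeting times, $\tau_{u,v}^n\to\tau_{u,v}$ in probability (Lemmas~\ref{lemm_conv}--\ref{lemma_conv_of_moment_of_meeting}), via a time-change to Brownian motion and a hitting-time continuity argument; since $m_n(u,t)=\int_0^1\I_{\{\tau_{u,v}^n\le t\}}\,dv$, this gives $m_n\to m$ and hence both (C4) and (C5) by passing to the limit in the corresponding $y_n$-martingales. That route avoids the upper-semicontinuity problem you flag.
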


Here $y(u,t)$ will be interpreted as the position of the particle starting from $u$ at a time $t$. Let us briefly explain conditions $(C1)$--$(C5)$. Conditions $(C1)$ and $(C2)$ are responsible for the fact that we have a set of {\cog diffusing} particles starting from all points of $[0,1]$. {\col Condition $(C3)$ reflects the coalescing behaviour of the particles. $(C4)$ and $(C5)$ give the diffusion rate, and a sort of ``independence'' of the motions up to their collision time.} It should be noted, that since the diffusion of every particle depends on how many particles coalesce to it, we cannot talk about the usual independence of the movement of particles up to the moment of meeting {\col as for the Arratia flow (in our case the motion depends on the mass).} {\cog However, in between two collision times particles move independently.}

{\cog At the moment, we do not know whether $(C1)$--$(C5)$ imply uniqueness. This remains an interesting open problem. It seems reasonable to conjecture that uniqueness holds. However, as it turns out, this approach is very useful for obtaining qualitative properties of the system.} For example, using $(C1)$--$(C5)$ in the paper we construct the stochastic integral with respect to the flow denoted by
$$
\int_0^1\int_0^t\varphi(y(u,s))dy(u,s)du
$$
(see Proposition~\ref{prop_fundam}), which is different from the integral with respect to the Arratia flow introduced by A.~A.~Dorogovtsev~\cite{Dorogovtsev:2007:en}. Namely the integral with respect to the Arratia flow is the sum of integrals over all pieces of trajectories up to the moment of coalescing. In our case, we integrate, roughly speaking, over ``the measure'' $d_sy(u,s)du$. {\col Using this integral, we obtain the analog of Ito's formula for functionals of the form $\int_0^1\varphi(y(u,t))du$.} Next, {\cog using this analog of Ito's formula, we establish the existence of the local time $\{L(a,t),\ a\in\R,\ t\in[0,T]\}$. This is the density} of the occupation measure
$$
\mu(A)=\int_0^1\int_0^{\tau(u)\wedge t}\I_A(y(u,s))ds
$$
{\cog (we refer to~Chapter~7~\cite{Dorogovtsev:2007:en} for the precise meaning of this object).}
{\col We also establish a Tanaka formula for the local time:}
\begin{align*}
L(a,t)&=\int_0^1(y(u,t)-a)^+du-\int_0^1(u-a)^+du\\
&-\int_0^1\int_0^t\I_{(a,+\infty)}(y(u,s))dy(u,s)du.
\end{align*}
Here the definition of the local time exactly coincides with one which was introduced for the Arratia flow in~\cite{Chernega:2012}.

Let us briefly describe the idea of the proof of Theorem~\ref{theorem_main_result} and the structure of the article. To build a system of particles starting from all points of the interval, we use the thermodynamic limit {\cog as in}~\cite{Ruelle:1969}, that is, we approximate our system by the system of particles starting from $\frac{k}{n}$, $k=1,\ldots,n$, with the mass $\frac{1}{n}$. {\cog We construct this approximate system in Proposition~\ref{prop_finite_syst}}. Next we pass to the limit as the number of particles tends to infinity. {\col Since the diffusion rate of the particles at the start} tends to infinity, we will pass to the limit in two steps. First, in Section~\ref{section_tightness} we show {\col that a sequence approximating} the continuum particle system is tight in the space $D([0,1],C(0,T])$, and {\col hence a subsequence is weakly convergent} to an element $\{y(u,t),\ u\in[0,1],\ t\in(0,T]\}$ in $D([0,1],C(0,T])$. In order to show this, we use some ideas of the paper~\cite{Piterbarg:1998}, in which the author checks the convergence rescaling homeomorphic isotropic stochastic flows {\col to the Arratia flow.} Next, we extend $\{y(u,t),\ u\in[0,1],\ t\in(0,T]\}$ to $t=0$ in Section~\ref{section_extension}. To do this, first we establish a property of the sequence (Proposition~\ref{prop_tight_xi}) and using it we show that
$$
\int_0^1\varphi(y(u,t))du\to\int_0^1\varphi(u)du\ \ \mbox{in probability},\ \ t\to 0.
$$
{\cog Thanks to this} property, the monotonicity of $y(u,\cdot),\ u\in[0,1]$, and the fact that $y(u,\cdot)$ is a continuous martingale for each $u$ {\cob (see Lemmas~\ref{lemma_converg_a_s} and~\ref{lemma_cont})}, we establish the possibility of extending $\{y(u,t),\ u\in[0,1],\ t\in(0,T]\}$ to the whole interval $[0,T]$. Section~\ref{section_C4_C5} is devoted to checking of conditions $(C4)$ and $(C5)$. {\cog Using conditions $(C2)$--$(C5)$, we derive in Section~\ref{section_C1} some estimates for the expectation of the diffusion rate of the} particles and show that $y(u,\cdot)$ is a continuous square integrable martingale, for each $u$. In Section~\ref{section_Ito_formula} we introduce the definition of a stochastic integral with respect to the flow of heavy diffusion particles, as a limit of partial sums and obtain an analog of Ito's formula. In Section~\ref{section_local_time}, {\cog we establish existence of the local time using the Ito formula. From Section~\ref{section_C1} on, we do not assume that the set of processes $\{y(u,t),\ u\in[0,1],\ t\in[0,T]\}$ is the limit of a finite systems, but we only assume that it is some process satisfying $(C1)$--$(C5)$.}

\section{Construction of the system}

\subsection{A finite system of particles}\label{section_finite_system}

In this section we construct a system of processes that describes an evolution of diffusion particles on the real line. We suppose that particles start from a finite number of points, move independently up to the moment of the meeting and coalesce, {\col and change their diffusion rates accordingly.} Since we approximate a system of particles starting from all points of the interval $[0,1]$ by a finite system, it is enough to consider the case where particles start from the points $\frac{k}{n}$, $k=1,\ldots,n$, with the mass $\frac{1}{n}$. So, let $n\in\mathbb{N}$ be fixed. Denote $[n]=\{1,2,\ldots,n\}$.

\begin{dfn}
A set $\pi=\{\pi_1,\ldots,\pi_p\}$ of non-intersecting subsets of $[n]$ is called {\col an order partition} of $[n]$ if
\begin{enumerate}
\item[1)] $\bigcup\limits_{i=1}^p\pi_i=[n]$;

\item[2)] if $l,k\in\pi_i$ and $l<j<k$ then $j\in\pi_i$, for all $i\in[p]$.
\end{enumerate}
\end{dfn}

The set of all order {\col partitions} of $[n]$ is denoted by~$\Pi^n$.

Every element $\pi=\{\pi_1,\ldots,\pi_p\}\in\Pi^n$ generates an equivalence relation between $[n]$ elements. We write $i\sim_{\pi}j$ provided there exists a number $k$ such that $i,j\in\pi_k$. Denote the equivalence class that contains the element $i\in[n]$ by $\widehat{i}_{\pi}$, i.e.
$$
\widehat{i}_{\pi}=\{j\in[n]:\ j\sim_{\pi}i\}.
$$

Using a system of independent Wiener processes $\{w_k(t),\ t\in[0,T],\ k\in[n]\}$ we construct the required system. Denote
$$
\tau^0=0,\quad\pi^0=\{\{k\},\ k\in[n]\}\in\Pi^n
$$
and
$$
w_k^0(t)=\frac{k}{n}+\frac{1}{\sqrt{n}}w_k(t),\quad t\in[0,T],\ \ k\in[n].
$$
Define by induction for $p\in[n-1]$
$$
\tau^p=\inf\{t>\tau^{p-1}:\ \exists i,j\in[n],\ i\not\sim_{\pi^{p-1}}j,\ w^{p-1}_i(t)=w^{p-1}_j(t)\}\wedge T.
$$
Take $\pi^p\in\Pi^n$ such that
$$
i\sim_{\pi^p}j\Leftrightarrow w^{p-1}_i(\tau^p)=w^{p-1}_j(\tau^p)
$$
and set for $k\in[n]$
$$
w_k^p(t)=\left\{\begin{array}{ll}
w_k^{p-1}(t), & t\leq\tau^p,\\
\sum_{i\in\widehat{k}_{\pi^p}}\frac{w_i^0(t)}{|\widehat{k}_{\pi^p}|}& t>\tau^p.
\end{array}\right.
$$

Denote for convenience $x_k^n(t)=w_k^{n-1}(t),\ t\in[0,1],\ k\in[n]$.

\begin{prp}\label{prop_finite_syst}
The set of the processes $\{x_k^n(t),\ k\in[n],\ t\in[0,T]\}$ satisfies the following conditions
\begin{enumerate}
\item[(F1)] for each $k\in[n]$, $x_k^n(\cdot)$ is a continuous square integrable martingale with respect to the filtration
$$
\F_t^n=\sigma(x_l^n(s),\ s\leq t,\ l\in[n]);
$$

\item[(F2)] for all $k\in[n]$, $x_k^n(0)=\frac{k}{n}$;

\item[(F3)] for all $k<l$ and $t\in[0,T]$, $x_k^n(t)\leq x_l^n(t)$;

\item[(F4)] for all $k\in[n]$, the quadratic variation has the form
$$
\langle x_k^n(\cdot)\rangle_t=\int_0^t\frac{ds}{m_k^n(s)},
$$
where $m_k^n(t)=\frac{1}{n}|\{j:\ \exists s\leq t\ x_j^n(s)=x_k^n(s)\}|$, $t\in[0,T]$;

\item[(F5)] for all $k,l\in[n]$ and $t\in[0,T]$,
$$
\langle x_k^n(\cdot),x_l^n(\cdot)\rangle_{t\wedge\tau_{k,l}}=0,
$$
where $\tau_{k,l}^n=\inf\{t:\ x_k^n(t)=x_l^n(t)\}\wedge T$.
\end{enumerate}
\end{prp}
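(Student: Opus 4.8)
The plan is to extract from the inductive construction an explicit Ito integral representation of each $x_k^n$ and then read (F1)--(F5) off it; the only genuinely delicate points are that the construction is well posed and that the averaging rule is compatible with the ordering and with path continuity at the collision times.

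First I would check, by induction on $p$, that the construction makes sense: $\tau^p$ is a stopping time of the Brownian filtration $\F_t^w=\sigma(w_i(s),\ s\le t,\ i\in[n])$ --- it is the first time after $\tau^{p-1}$ at which the continuous adapted process $s\mapsto\min_{i\not\sim_{\pi^{p-1}}j}|w_i^{p-1}(s)-w_j^{p-1}(s)|$ vanishes, truncated at $T$ --- the partition $\pi^p$ is $\F_{\tau^p}^w$-measurable, and, crucially, $\pi^p\in\Pi^n$. This last point forces (F3) to be proved simultaneously: I would show, along with the construction, that $x_1^n(t)\le\dots\le x_n^n(t)$ for every $t$ and that distinct blocks of $\pi^p$ occupy distinct positions at $\tau^p$. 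The inductive step is a first-passage argument: were the ordering to fail at some $t_0\in(\tau^p,\tau^{p+1}]$, let $t_1$ be the largest time in $[\tau^p,t_0]$ with $x_k^n\le x_{k+1}^n$; by continuity $x_k^n(t_1)=x_{k+1}^n(t_1)$, so two particles meet at $t_1$. If they lie in the same $\pi^p$-block they coincide identically thereafter, contradicting $x_k^n(t_0)>x_{k+1}^n(t_0)$; if not, then $t_1>\tau^p$ since distinct $\pi^p$-blocks are separated at $\tau^p$, so by the definition of $\tau^{p+1}$ we would need $t_1\ge\tau^{p+1}$, whereas $t_1<t_0\le\tau^{p+1}$. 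Hence the ordering persists on $(\tau^p,\tau^{p+1}]$; the particles coinciding at $\tau^{p+1}$ then form contiguous unions of $\pi^p$-blocks, so $\pi^{p+1}\in\Pi^n$ and its distinct blocks are again separated, whence $\tau^{p+1}>\tau^p$ on $\{\tau^p<T\}$. Since each collision merges at least two blocks, the recursion stabilises in at most $n-1$ steps and $x_k^n=w_k^{n-1}$ is well defined; this proves (F3), while (F2) is immediate from $w_k^0(0)=k/n$ and $\tau^1>0$.

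Next I would establish path continuity of $x_k^n$ and the representation
$$
x_k^n(t)=\frac{k}{n}+\sqrt{n}\sum_{i=1}^n\int_0^t\frac{\I\{i\in\widehat{k}_{\pi(s)}\}}{|\widehat{k}_{\pi(s)}|}\,dw_i(s),
$$
where $\pi(s)$ is the partition governing the dynamics at time $s$ (left-continuous by construction), so that the integrand is bounded, adapted and predictable. On each interval $(\tau^p,\tau^{p+1}]$ the recursion gives $x_k^n=|\widehat{k}_{\pi^p}|^{-1}\sum_{i\in\widehat{k}_{\pi^p}}w_i^0$, which is continuous and has the stated differential; continuity across a collision time $\tau^{p+1}$ reduces to the identity that the mean of $w_i^0(\tau^{p+1})$ over the newly merged block equals its mean over the old block, which holds because at $\tau^{p+1}$ the block means of the merging $\pi^p$-blocks all coincide, so a weighted average of equal numbers collapses to that common value. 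Given the representation, (F1) is immediate: $x_k^n$ is $k/n$ plus a finite sum of Ito integrals of bounded predictable integrands, hence a continuous square integrable $\F^w$-martingale, and being $\F_\cdot^n$-adapted with $\F_t^n\subseteq\F_t^w$ it is also a martingale for $(\F_t^n)$.

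Finally, (F4) and (F5) come out of the (cross-)variations of the representation. Since partitions only coarsen and every meeting triggers a merge, for $s\in(\tau^p,\tau^{p+1}]$ the set $\{j:\exists r\le s,\ x_j^n(r)=x_k^n(r)\}$ equals $\widehat{k}_{\pi^p}$, so $m_k^n(s)=|\widehat{k}_{\pi(s)}|/n$; hence $d\langle x_k^n\rangle_s=n\,|\widehat{k}_{\pi(s)}|^{-1}\,ds=ds/m_k^n(s)$, which integrates to (F4). For (F5), on $\{s<\tau_{k,l}^n\}$ the particles $k$ and $l$ have not yet met, so $\widehat{k}_{\pi(s)}$ and $\widehat{l}_{\pi(s)}$ are disjoint; the integrands defining $x_k^n$ and $x_l^n$ then have disjoint supports, the cross-variation vanishes, and $\langle x_k^n,x_l^n\rangle_{t\wedge\tau_{k,l}^n}=0$. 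I expect the main obstacle to be the joint induction in the second step --- carrying the ordering, the order-partition property of each $\pi^p$ and the separation of distinct blocks simultaneously, and then feeding this into the continuity check --- with the delicate case being several blocks colliding at the same instant.
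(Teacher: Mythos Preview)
Your proposal is correct. The paper offers no proof, stating only that (F1)--(F5) ``can be easily derived from the above construction,'' so your detailed plan --- the joint induction carrying the ordering and the order-partition property of each $\pi^p$, the explicit It\^o representation, and the computation of the (cross-)variations from it --- is exactly what is needed to justify that one-line assertion, and every step you sketch goes through. One incidental remark: the paper's displayed formula $w_k^0(t)=\tfrac{k}{n}+\tfrac{1}{\sqrt n}\,w_k(t)$ is a misprint incompatible with (F4) (initially $m_k^n=1/n$, so the quadratic variation should grow like $nt$, not $t/n$); the factor $\sqrt n$ in your It\^o representation is the correct normalisation.
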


The proof of this proposition can be easily derived from the above construction of the processes $x_k^n(\cdot),\ k\in[n]$. In~\cite{Konarovskiy:2010:TVP:en} the author proved that conditions $(F1)$--$(F5)$ of Proposition~\ref{prop_finite_syst} uniquely determine the distribution of $(x_k^n(\cdot))_{k\in[n]}$ in the space of continuous functions from $[0,T]$ to $\R^n$. In other words, if a set of processes $\{\xi_k(t),\ k\in[n],\ t\in[0,T]\}$ satisfies $(F1)$--$(F5)$ then the distributions of $(x_k^n(\cdot))_{k\in[n]}$ and $(\xi_k(\cdot))_{k\in[n]}$ coincide.

Let us prove a property of the constructed system which will be used.

\begin{lem}\label{lemma_expectation}
{\col $(i)$ If $C=\sqrt{T}+1$, then for all $n\in\N$, $k\in[n]$} and $t\in[0,T]$
$$
\E |x_k^n(t)|\leq C.
$$
{\col $(ii)$ For each $0<p_1<p_2<1$ there exists a constant $C(p_1,p_2)$ such that
$$
\E\max_{t\in[0,T]}(x_k^n(t))^2\leq C(p_1,p_2),
$$
for all $n\geq\frac{1}{p_1}$ and $k\in[n]$ satisfying $\frac{k}{n}\in(p_1,p_2)$.}
\end{lem}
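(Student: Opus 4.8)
The plan is to treat the two parts separately, deriving (i) from the martingale property plus a crude bound on the quadratic variation, and (ii) from Doob's maximal inequality applied to a stopped version of the process. For part (i), since $x_k^n(\cdot)$ is a continuous square integrable martingale with $x_k^n(0)=k/n\in[0,1]$, we have $\E|x_k^n(t)|\le |k/n| + \E|x_k^n(t)-k/n|$, and by the $L^2$-bound this is at most $1 + (\E\langle x_k^n\rangle_t)^{1/2}$. The quadratic variation is $\int_0^t m_k^n(s)^{-1}\,ds$ by (F4), and since $m_k^n(s)\ge 1/n$ is too weak to be useful directly, the key observation is that the mass only increases and that the total mass is $1$; more to the point, one can bound $m_k^n(s)^{-1}$ crudely. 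Actually the cleanest route: $\langle x_k^n\rangle_T \le$ something deterministic is false in general, so instead I would note $\E\langle x_k^n\rangle_t = \E\int_0^t m_k^n(s)^{-1}ds$ and use that $x_k^n$ is a martingale to get $\E (x_k^n(t))^2 = (k/n)^2 + \E\langle x_k^n\rangle_t$. To close (i) I would argue that $\E\langle x_k^n\rangle_t\le t$: since $m_k^n(s)\ge 1/n$ only gives $\le nt$, the honest argument must use that particles coalesce quickly — but in fact a direct comparison works, because before any coalescence the particle $x_k^n$ has diffusion rate exactly $n$ only on a time interval whose expected length is $O(1/n)$ (two independent Brownian motions at distance $1/n$ meet in expected time $O(1/n^2)$ scaled appropriately), so $\E\int_0^t m_k^n(s)^{-1}\,ds$ telescopes over the coalescence levels and is bounded by a constant times $t$, giving $\E|x_k^n(t)|\le 1 + \sqrt{t} \le C$.

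For part (ii), the point is to get a maximal bound that is uniform in $n$ and $k$ as long as $k/n$ stays in a compact subinterval $(p_1,p_2)$ of $(0,1)$, which is exactly the kind of statement one needs for tightness away from the boundary. Here the martingale $x_k^n$ is bounded in probability by the monotonicity (F3): $x_k^n(t)$ is squeezed between the trajectories of particles starting near $0$ and near $1$, but those too can wander. The real mechanism is that to leave a neighbourhood of $k/n$ a particle must either coalesce with a macroscopic chunk of mass (which slows it down — good for us) or travel a macroscopic distance while still light (which is a large-deviation event for Brownian motion). I would introduce the exit time $\sigma = \inf\{t: x_k^n(t)\notin(p_1/2, (1+p_2)/2)\}\wedge T$ — or better, bound $x_k^n(t)$ above by $x_{\lceil np_2\rceil}^n(t)$ which is itself dominated by $1 + n^{-1/2}w_{\lceil np_2\rceil}$ until its first coalescence, and symmetrically from below. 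Then apply Doob's $L^2$ inequality to the stopped martingale: $\E\max_{t\le T}(x_k^n(t))^2 \le 4\E(x_k^n(T))^2 = 4(k/n)^2 + 4\E\langle x_k^n\rangle_T$, and the last term is controlled by part (i)'s estimate together with the fact that for $k/n$ bounded away from $0$ the mass $m_k^n(s)$ grows at a rate controlled uniformly — roughly, the particle quickly accumulates mass proportional to $\min(k,n-k)/n\ge$ const, after which its diffusion rate is $O(1)$.

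The main obstacle is the bound on $\E\langle x_k^n\rangle_T = \E\int_0^T m_k^n(s)^{-1}\,ds$, uniformly in $n$: one needs that a tagged particle, although starting with mass $1/n$ and hence diffusion rate $n$, spends only a negligible (expected) amount of time at each low mass level before merging. I expect to handle this by writing $\int_0^T m_k^n(s)^{-1}\,ds = \sum_{j\ge 1} \frac{1}{j/n}\cdot(\text{time during which } n\,m_k^n(s)=j)$ and estimating $\E(\text{time at mass level } j/n)$ via the hitting-time structure of the coalescing system — equivalently, by noting that the total quadratic variation accumulated by the cluster containing $k$ is dominated by that of a single Brownian motion run at the (decreasing) rate, and that the number of particles in the cluster dominates a pure-death-type comparison. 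This is essentially the estimate that appears in the author's earlier work \cite{Konarovskiy:2010:TVP:en}, and invoking it (or re-deriving it from the explicit construction via the Wiener processes $w_k$) is what makes both parts go through; the restriction $k/n\in(p_1,p_2)$ in (ii) is precisely what guarantees the cluster mass is bounded below by a positive constant with enough probability for the maximal inequality to give an $n$-free bound.
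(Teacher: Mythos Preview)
Your proposal has a genuine gap in both parts: you reduce everything to a uniform-in-$n$ bound on $\E\langle x_k^n\rangle_T = \E\int_0^T m_k^n(s)^{-1}\,ds$, but you never actually prove this bound. The sentence ``telescopes over the coalescence levels and is bounded by a constant times $t$'' is not an argument, and in fact the linear-in-$t$ bound you claim is not correct (the right order is $\sqrt{t}$, cf.\ Lemma~\ref{lemma_estim_expect_char} later in the paper). For part~(ii) you again fall back on this same unproved estimate, together with a vague ``the particle quickly accumulates mass proportional to $\min(k,n-k)/n$''. Appealing to \cite{Konarovskiy:2010:TVP:en} does not close the gap either: you would be invoking a result at least as delicate as the lemma you are trying to prove, and in the logical flow of this paper the control on $\E\langle x_k^n\rangle_T$ is established only \emph{after} Lemma~\ref{lemma_expectation} has already been used for tightness.

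The paper's proof avoids single-particle quadratic variations entirely by passing to \emph{averages}. For (i) one takes the full center of mass $\eta_n(t)=\frac{1}{n}\sum_k x_k^n(t)$; the identity $\sum_{l}\I_{\{\tau_{k,l}^n\le s\}}=n\,m_k^n(s)$ collapses its quadratic variation to exactly $t$, so $\eta_n$ is a genuine Brownian motion, and then monotonicity plus the martingale property give $\E|x_k^n(t)-\eta_n(t)|\le \E(x_n^n(t)-x_1^n(t))=1-\tfrac{1}{n}\le 1$, yielding the sharp constant $\sqrt{T}+1$. For (ii) one sandwiches $x_k^n$ between the two partial averages $M_i(t)=|A_i|^{-1}\sum_{l\in A_i}x_l^n(t)$ with $A_1=\{l:l/n\le p_1\}$, $A_2=\{l:l/n\ge p_2\}$; the same counting identity gives the \emph{deterministic} bound $\langle M_i\rangle_T\le nT/|A_i|$, which is uniformly bounded because $|A_i|\asymp n$. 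This averaging trick is the missing idea: it converts the hard estimate on a single $m_k^n$ into a trivial algebraic cancellation.
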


\begin{proof}
Consider the process
$$
\eta_n(t)=\frac{1}{n}\sum_{k=1}^nx_k^n(t),\quad t\in[0,T].
$$
Note that by condition $(F1)$, $\eta_n(\cdot)$ is a continuous square integrable martingale. Using Ito's formula and condition $(F4)$ we obtain
$$
\eta_n^2(t)=\mbox{mart.}+\frac{1}{n^2}\sum_{k=1}^n\sum_{l=1}^n\int_0^t\frac{\I_{\{\tau^n_{k,l}\leq s\}}ds}{m_k^n(s)}=\mbox{mart.}+t.
$$
So, $\eta_n(\cdot)$ is a continuous square integrable martingale with the quadratic variation $\langle\eta_n(\cdot)\rangle_t=t$, $t\in[0,T]$. {\col By the martingale characterization} of Brownian motion (see Theorem~2.6.1~\cite{Watanabe:1981:en}), $\eta_n(\cdot)$ is a Wiener process. {\col To bound the expectation of $|x_k^n(t)|$, write}
\begin{align*}
\E |x_k^n(t)| & \leq\E |x_k^n(t)-\eta_n(t)|+\E|\eta_n(t)|\\
              &\leq\E(x_n^n(t)-x_1^n(t))+\sqrt{T}=\sqrt{T}+1.
\end{align*}
The latter inequality follows from conditions $(F1)$ and $(F2)$.

{\col Next we prove the second part of the lemma. Let $n\geq\frac{1}{p_1}$ be fixed. Set
$$
 A_1=\left\{l\in[n]:\ \frac{l}{n}\leq p_1\right\},\quad
 A_2=\left\{l\in[n]:\ \frac{l}{n}\geq p_2\right\}.
$$
Note that $A_1$ and $A_2$ is non-empty, by the choice of $n$. By $(F1)$, the processes
$$
M_i(t)=\frac{1}{|A_i|}\sum_{l\in A_i}x_l^n(t),\quad t\in[0,T],\ \ i=1,2,
$$
are continuous square integrable martingales. Using $(F3)$, we have for all $t\in[0,T]$
$$
M_1(t)\leq x_k^n(t)\leq M_2(t),\quad\mbox{if }\ \frac{k}{n}\in(p_1,p_2).
$$
Thus
\begin{align*}
 \E\max_{t\in[0,T]}(x_k^n(t))^2&\leq\E\max_{t\in[0,T]}(M_1^2(t)\vee M_2^2(t))\\
 &\leq\E\max_{t\in[0,T]}M_1^2(t)+ \E\max_{t\in[0,T]}M_2^2(t).
\end{align*}
Hence by The Burkholder-Davis-Gundy inequality
$$
\E\max_{t\in[0,T]}M_i^2(t)\leq \E\langle M_i\rangle_T,\quad i=1,2.
$$
Let us estimate the quadratic variation of $M_i$. By conditions $(F4)$ and $(F5)$,
$$
\langle M_i\rangle_T=\frac{1}{|A_i|^2}\sum_{l\in A_i}\sum_{j\in A_i}\int_0^T\frac{\I_{\{\tau_{j,l}^n\leq s\}}}{m_l^n(s)}ds.
$$
Using the relation $\sum_{j\in A_i}\I_{\{\tau_{j,l}^n\leq s\}}=|\{j:\ \exists r\leq s\ x_j^n(r)=x_l^n(r)\}\cap A_i|\leq n m_l^n(s)$, we obtain
\begin{align*}
\langle M_i\rangle_T\leq \frac{nT}{|A_i|^2} \sum_{l\in A_i}1=\frac{nT}{|A_i|},
\end{align*}
where $|A_1|=\lfloor np_1\rfloor$ and $|A_2|=\lfloor n(1-p_2)\rfloor+1$. It finishes the proof of the lemma.
}
\end{proof}

\subsection{Tightness in Skorohod space $D([0,1],C(0,T])$}\label{section_tightness}

Let $C[a,b]$ denote the metric space of continuous functions from $[a,b]$ to $\R$ with the uniform distance, and $C(0,T]$ denote the metric space of continuous functions from $(0,T]$ to $\R$ with the metric generated by the uniform convergence on compact subsets of $(0,T]$. Denote by $D([0,1],E)$ the space of right continuous functions from $[0,1]$ to a metric space $E$ with left limits, equipped with the standard Skorohod topology.

Let us set
$$
y_n(u,\cdot)=\begin{cases}
x_k^n(\cdot),\quad \frac{k-1}{n}\leq u<\frac{k}{n},\ \ k\in[n],\\
x_n^n(\cdot),\quad u=1,
\end{cases}
$$
and note that $y_n=\{y_n(u,t),\ u\in[0,1],\ t\in[0,T]\}$ is a random element of the space $D([0,1],C[0,T])$. We are going to show that the sequence $\{y_n\}_{n\geq 1}$ is tight. But from Condition $(F4)$, we can see that for large enough $n$ the mass of each particle is small for small time. It means that the fluctuations of the particles grow, so we cannot talk about tightness on the whole time interval $[0,T]$. For this reason, first we consider an evolution of the particles on the time interval $[\eps,T]$, where $\eps>0$, and using the fact that the particles coalesce quickly we prove the tightness of our system in $D([0,1],C[\eps,T])$. Then we conclude that the tightness in $D([0,1],C(0,T])$ holds.

\begin{prp}\label{prop_tightness_in_D_eps}
For all $\eps>0$ the sequence $\{y_n(u,t),\ u\in[0,1],\ t\in[\eps,T]\}_{n\geq 1}$ is tight in $D([0,1],C[\eps,T])$.
\end{prp}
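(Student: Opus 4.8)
The plan is to verify tightness in $D([0,1],C[\eps,T])$ by separately controlling the modulus of continuity in the spatial variable $u$ and in the time variable $t$, using the two standard ingredients: a uniform bound on the processes together with control of the oscillations. Since the coordinate processes $x_k^n$ are monotone in $k$ (condition $(F3)$), the map $u\mapsto y_n(u,\cdot)$ is monotone, so the ``jumps'' of the Skorohod path are well-behaved and the relevant modulus is essentially governed by the total spatial spread $y_n(1,t)-y_n(0,t)=x_n^n(t)-x_1^n(t)$. First I would establish that for fixed $\eps>0$ the family $\{x_k^n(t):\ \eps\le t\le T\}$ is bounded in probability uniformly in $n,k$; this follows from Lemma~\ref{lemma_expectation}$(i)$ together with a maximal inequality, using that the extreme particles $x_1^n$ and $x_n^n$ are continuous square integrable martingales whose quadratic variations are controlled by the total mass (condition $(F4)$): on $[\eps,T]$ the mass of any particle that has coalesced is at least of order depending only on $\eps$, not on $n$. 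The key point, exactly as in Piterbarg's rescaling argument~\cite{Piterbarg:1998}, is that after a short time the particles have coalesced into finitely many clusters, so the diffusion rates are bounded, uniformly in $n$, on $[\eps,T]$.

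The central estimate I would prove is a bound of the form
$$
\E\left[\bigl(y_n(v,t)-y_n(u,s)\bigr)^2\wedge\text{(spatial spread)}\right]\le C(\eps)\bigl(|v-u|+|t-s|\bigr)^{1+\delta}
$$
for some $\delta>0$ and $s,t\in[\eps,T]$, or rather the appropriate Skorohod-space analogue involving the quantity
$$
\min\bigl(|y_n(v,t)-y_n(u,t)|,\ \text{something}\bigr),
$$
controlled via the expected quadratic variation. For the time oscillation of a single $x_k^n$, Burkholder--Davis--Gundy plus $(F4)$ gives $\E\bigl(x_k^n(t)-x_k^n(s)\bigr)^2\le \E\int_s^t (m_k^n(r))^{-1}dr$, and on $[\eps,T]$ one bounds $(m_k^n(r))^{-1}$ using that the number of surviving clusters is controlled. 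For the spatial increment, monotonicity reduces everything to estimating $\E\bigl(x_{k+j}^n(t)-x_k^n(t)\bigr)$ in terms of $j/n$; here one uses the martingale property to reduce to $t=0$, where the increment is exactly $j/n$, plus a correction from the coalescing dynamics that one shows is of the right order. Summing these over the partition points and invoking the Kolmogorov--Chentsov-type tightness criterion for $D([0,1],C[\eps,T])$ (e.g.\ via Billingsley's modulus-of-continuity conditions adapted to Skorohod space for monotone paths) yields tightness.

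The main obstacle is obtaining the uniform-in-$n$ lower bound on the mass $m_k^n(r)$ for $r\ge\eps$, equivalently the uniform-in-$n$ upper bound on the number of distinct clusters present at time $\eps$. This is the genuinely probabilistic input: one must show that coalescence is fast enough that, with overwhelming probability, by time $\eps$ the $n$ initial particles have merged into $O(1/\sqrt{\eps})$ or at least $o(n)$ clusters, so that each surviving particle carries mass bounded below by a constant depending only on $\eps$. The standard way is to compare, for two neighbouring clusters, the difference process (which by $(F5)$ evolves as a time-changed Brownian motion until they meet) and estimate the probability that it has not hit zero by time $\eps$; a chaining/dyadic argument over the $n$ gaps, exploiting that the difference of two clusters is a martingale with quadratic variation at least $t$ on the relevant time scale (since their combined mass is at most $1$), shows the expected number of gaps surviving to time $\eps$ is $O(1/\sqrt{\eps})$ uniformly in $n$. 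Once this is in hand, all the diffusion rates on $[\eps,T]$ are bounded by $1/(c\eps)$-type constants and the moment estimates above go through routinely, giving tightness in $D([0,1],C[\eps,T])$; the passage to $D([0,1],C(0,T])$ is then immediate since tightness on each $[\eps,T]$, $\eps=1/m$, combined with the projective-limit structure of $C(0,T]$, implies tightness in $D([0,1],C(0,T])$.
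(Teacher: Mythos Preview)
Your proposal has a genuine gap in the spatial variable. The moment bound you aim for,
\[
\E\bigl[(y_n(v,t)-y_n(u,t))^2\bigr]\le C(\eps)\,|v-u|^{1+\delta},
\]
cannot hold for any $\delta>0$. The difference $Z_s=y_n(v,s)-y_n(u,s)$ is a non-negative martingale started at (approximately) $h=|v-u|$ and absorbed at~$0$; comparing with Brownian motion absorbed at~$0$ gives $\E[Z_t^2]=h^2+\E[t\wedge\tau]\asymp h\sqrt{t}$, and the same \emph{linear} dependence on $h$ persists for every higher moment $\E[Z_t^p]$ and for the supremum over~$t$. Linear single-increment bounds are never enough for Skorohod tightness, and no mass control on $[\eps,T]$ repairs this, because the bad behaviour occurs on $[0,\eps]$ before masses have built up. Your parenthetical mention of ``the appropriate Skorohod-space analogue involving $\min(\cdot,\cdot)$'' points in the right direction, but you do not say what estimate would replace the failed one or how to prove it.

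The paper's proof avoids single-increment moments entirely. The key input is a three-point bound (Lemma~\ref{lemma_three_points})
\[
\p\bigl\{\|y_n(u+h,\cdot)-y_n(u,\cdot)\|\ge\lambda,\ \|y_n(u,\cdot)-y_n(u-h,\cdot)\|\ge\lambda\bigr\}\le \frac{9h^2}{\lambda^2},
\]
valid on the whole interval $[0,T]$, obtained by showing that the \emph{product} of the two gap processes (stopped at suitable hitting times) is a supermartingale starting from $O(h^2)$; the cross terms vanish by~$(F5)$. This $O(h^2)$ joint-exceedance bound is exactly what the Ethier--Kurtz criterion (Theorems~3.8.6 and~3.8.8 of~\cite{Ethier:1986}) requires, and it is the idea your plan is missing. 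For the time variable the paper uses the Aldous criterion at each fixed~$u$ (Lemma~\ref{lemma_tightness_in_C}), based on the distributional estimate $\p\{m_n(u,\eps)<\gamma\}\le C\gamma$; your stronger claim that ``each surviving particle carries mass bounded below by a constant depending only on~$\eps$'' does not follow from a bound on the expected number of clusters and is in fact false.
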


First we prove several auxiliary lemmas.

\begin{lem}\label{lemma_three_points}
For all $n\in\N$, $u\in[0,2]$, $h\in[0,u]$ and $\lambda>0${\cob
$$
\p\{\|y_n(u+h,\cdot)-y_n(u,\cdot)\|\geq\lambda,\ \|y_n(u,\cdot)-y_n(u-h,\cdot)\|\geq\lambda\}\leq\frac{9h^2}{\lambda^2}.
$$  }
Here $y_n(u,\cdot)=y_n(1,\cdot)$, $u\in[1,2]$, and $\|\cdot\|$ is the uniform norm on $[0,T]$.
\end{lem}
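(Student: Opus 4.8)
The plan is to reduce the statement to a genuine three‑particle estimate and to exploit that the two gaps around the middle particle are negatively correlated. Set $a=y_n(u-h,\cdot)$, $b=y_n(u,\cdot)$, $c=y_n(u+h,\cdot)$; each of these coincides with one of $x_1^n,\dots,x_n^n$, so by $(F1)$ and $(F3)$ the processes $M:=c-b$ and $N:=b-a$ are nonnegative continuous square‑integrable $\F^n$‑martingales, and the event in question is $E_1\cap E_2$ with $E_1=\{\|M\|\ge\lambda\}$, $E_2=\{\|N\|\ge\lambda\}$. Write $\delta_1=M(0)$, $\delta_2=N(0)$. We may assume the starting points $a(0)\le b(0)\le c(0)$ are pairwise distinct, since otherwise $M$ or $N$ is identically zero and $\p(E_1\cap E_2)=0$. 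As $a(0),b(0),c(0)\in\tfrac1n\Z$, distinctness gives $c(0)-a(0)\ge\tfrac2n$, whereas a short computation with the grid shows $c(0)-a(0)\le 2h+\tfrac1n$ and $\delta_1,\delta_2\le h+\tfrac1n$; hence $\tfrac1n\le 2h$ and therefore $\delta_1,\delta_2\le 3h$, so $\delta_1\delta_2\le 9h^2$. Finally we may assume $0<\delta_1,\delta_2<\lambda$, since if, say, $\delta_1\ge\lambda$ then $3h\ge\lambda$ and $9h^2/\lambda^2\ge 1\ge\p(E_1\cap E_2)$.

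The heart of the matter, which I expect to be the main point of the proof, is the claim
$$\langle M,N\rangle_t\le 0,\qquad t\in[0,T].$$
Expanding $\langle M,N\rangle=\langle c,b\rangle-\langle c,a\rangle-\langle b\rangle+\langle b,a\rangle$ and using $(F4)$, $(F5)$ together with the fact that once two particles meet they stay merged, one checks that the cross variation of any two of $a,b,c$ is zero up to their meeting time and afterwards accumulates at rate $1/m_b(t)$ (the inverse of the current mass of the cluster containing $b$), so that
$$d\langle M,N\rangle_t=\Big(\I_{\{t>\tau_{b,c}\}}+\I_{\{t>\tau_{a,b}\}}-\I_{\{t>\tau_{a,c}\}}-1\Big)\frac{dt}{m_b(t)},$$
where $\tau_{\cdot,\cdot}$ are the pairwise meeting times; since $a\le b\le c$ forces $\tau_{a,c}=\tau_{a,b}\vee\tau_{b,c}$, the bracket never exceeds $0$. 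Consequently $P:=MN-\langle M,N\rangle$ is a continuous martingale (a linear combination of martingales of the form $X^2-\langle X\rangle$) with $P\ge MN\ge 0$, and optional stopping at any bounded stopping time $\sigma\le T$ gives $\E[M(\sigma)N(\sigma)]=P(0)+\E\langle M,N\rangle_\sigma\le\delta_1\delta_2$.

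Now put $\sigma_1=\inf\{t:M(t)\ge\lambda\}\wedge T$, $\sigma_2=\inf\{t:N(t)\ge\lambda\}\wedge T$ and $\sigma=\sigma_1\wedge\sigma_2$; by path continuity and $\delta_i<\lambda$ one has $M(\sigma_1)=\lambda$ on $E_1$ and $N(\sigma_2)=\lambda$ on $E_2$. On $\{\sigma_1\le\sigma_2\}\cap E_1\cap E_2$ we have $\sigma=\sigma_1$, hence $M(\sigma)=\lambda$ and $N(\sigma)=N(\sigma_1)$, and since $N$ has not reached $\lambda$ before $\sigma_1$ the event $E_2$ forces $\sup_{t\in[\sigma_1,T]}N(t)\ge\lambda$. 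As $\I_{\{\sigma_1\le\sigma_2\}\cap E_1}$ is $\F^n_{\sigma_1}$‑measurable, the conditional form of Doob's maximal inequality applied to $N$ on $[\sigma_1,T]$ yields
$$\p\big(\{\sigma_1\le\sigma_2\}\cap E_1\cap E_2\big)\le\E\Big[\I_{\{\sigma_1\le\sigma_2\}\cap E_1}\,\frac{N(\sigma_1)}{\lambda}\Big]\le\frac{1}{\lambda^2}\,\E\big[\I_{\{\sigma_1\le\sigma_2\}}M(\sigma)N(\sigma)\big],$$
where we used $N(\sigma_1)=M(\sigma)N(\sigma)/\lambda$ on that event and $M(\sigma)N(\sigma)\ge 0$. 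The symmetric bound holds on $\{\sigma_2<\sigma_1\}$ with $M$ and $N$ interchanged; adding the two and using the previous paragraph,
$$\p(E_1\cap E_2)\le\frac{1}{\lambda^2}\,\E[M(\sigma)N(\sigma)]\le\frac{\delta_1\delta_2}{\lambda^2}\le\frac{9h^2}{\lambda^2}.$$

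The main obstacle is the sign of $\langle M,N\rangle$. A single application of Doob's inequality only gives $\p(E_i)\le\delta_i/\lambda$, which is linear in $h$ and misses the claimed quadratic decay, and for general nonnegative martingales the product inequality $\p(\|M\|\ge\lambda,\|N\|\ge\lambda)\le M(0)N(0)/\lambda^2$ is false; it is precisely the bookkeeping of meeting times encoded in $(F4)$–$(F5)$ that forces $\langle M,N\rangle\le 0$ and makes the two gaps behave as if independent. The remaining ingredients — optional stopping for $P$ and the conditional Doob inequality — are routine.
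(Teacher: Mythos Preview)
Your argument is correct and rests on the same key observation as the paper: the cross variation $\langle M,N\rangle$ of the two gaps is nonpositive. Your explicit formula
\[
d\langle M,N\rangle_t=\bigl(\I_{\{t>\tau_{b,c}\}}+\I_{\{t>\tau_{a,b}\}}-\I_{\{t>\tau_{a,c}\}}-1\bigr)\frac{dt}{m_b(t)}\le 0
\]
together with $\tau_{a,c}=\tau_{a,b}\vee\tau_{b,c}$ makes this completely transparent, and your grid computation justifying $\delta_1\delta_2\le 9h^2$ fills in a step the paper leaves implicit.

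Where you diverge from the paper is in the finishing step. The paper stops each gap at its own level-$\lambda$ hitting time, i.e.\ considers the product
\[
\widetilde M(t)=\bigl(y_n(u+h,t\wedge\sigma^+)-y_n(u,t\wedge\sigma^+)\bigr)\bigl(y_n(u,t\wedge\sigma^-)-y_n(u-h,t\wedge\sigma^-)\bigr),
\]
shows directly that $\widetilde M$ is a supermartingale (precisely because the cross variation is $\le 0$), and concludes in one stroke from $\widetilde M(T)\ge\lambda^2\I_{E_1\cap E_2}$ and $\E\widetilde M(T)\le\widetilde M(0)=\delta_1\delta_2$. You instead stop at $\sigma=\sigma_1\wedge\sigma_2$, bound $\E[M(\sigma)N(\sigma)]\le\delta_1\delta_2$ by optional stopping, and then need a second step (conditional Doob for the gap that has not yet hit $\lambda$). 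Both routes are valid; the paper's is shorter and avoids the measurability bookkeeping around $\{\sigma_1\le\sigma_2\}\cap E_1\in\F_{\sigma_1}$, while yours has the advantage of making the sign of $\langle M,N\rangle$ fully explicit rather than going through the paper's term-by-term decomposition of the product.
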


\begin{proof}
Let $(\F_t^{y_n})_{t\in[0,T]}$ be the filtration generated by $y_n$, i.e.
\begin{equation}\label{f_filtration}
\F_t^{y_n}=\sigma(y_n(u,s),\ s\leq t,\ u\in[0,1]),\quad t\in[0,T].
\end{equation}

Consider the $(\F_t^{y_n})$-stopping times
\begin{align*}
\sigma^+&=\inf\{t:\ y_n(u+h,t)-y_n(u,t)\geq\lambda\}\wedge T,\\
\sigma^-&=\inf\{t:\ y_n(u,t)-y_n(u-h,t)\geq\lambda\}\wedge T,\\
\tau&=\inf\{t:\ y_n(u+h,t)-y_n(u,t)=0\ \mbox{or}\ y_n(u,t)-y_n(u-h,t)=0\}\wedge T
\end{align*}
and the process
\begin{align*}
M(t)=&(y_n(u+h,t\wedge\sigma^+)-y_n(u,t\wedge\sigma^+))\\
\cdot &(y_n(u,t\wedge\sigma^-)-y_n(u-h,t\wedge\sigma^-)),\quad t\in[0,T].
\end{align*}
We show that $M(\cdot)$ is a supermartingale. For this purpose we calculate the joint quadratic variation of $y_n(u_1,\cdot\wedge\sigma^+)$ and $y_n(u_2,\cdot\wedge\sigma^-)$, $u_1=u+h$ and $u$, $u_2=u$ and $u-h$.
\begin{align*}
\langle y_n(u+h,&\cdot\wedge\sigma^+),y_n(u,\cdot\wedge\sigma^-)\rangle_{t\wedge\tau}\\
&=\left\langle\int_0^{\cdot}\I_{\{s\leq\sigma^+\}}dy_n(u+h,s),\int_0^{\cdot} \I_{\{s\leq\sigma^-\}}dy_n(u,s)\right\rangle_{t\wedge\tau}\\
&=\int_0^{t\wedge\tau}\I_{\{s\leq\sigma^+\wedge\sigma^-\}}d\langle y_n(u+h,\cdot),y_n(u,\cdot)\rangle_s=0,
\end{align*}
since $\langle y_n(u+h,\cdot),y_n(u,\cdot)\rangle_t=0$, for all $t\leq\tau$. Similarly,
\begin{align*}
\langle y_n(u+h,\cdot\wedge\sigma^+),y_n(u-h,\cdot\wedge\sigma^-)\rangle_{t\wedge\tau}&=0,\\
\langle y_n(u,\cdot\wedge\sigma^+),y_n(u-h,\cdot\wedge\sigma^-)\rangle_{t\wedge\tau}&=0
\end{align*}
and
$$
\langle y_n(u,\cdot\wedge\sigma^+),y_n(u,\cdot\wedge\sigma^-)\rangle_{t\wedge\tau}= \int_0^{t\wedge\tau}\I_{\{s\leq\sigma^+\wedge\sigma^-\}}d\langle y_n(u,\cdot)\rangle_s=A(t).
$$

Since $y_n(u,\cdot\wedge\sigma^+)y_n(u,\cdot\wedge\sigma^-)-A(\cdot)$ is a martingale and the process $A(\cdot)$ does not decrease, $y_n(u,\cdot\wedge\sigma^+)y_n(u,\cdot\wedge\sigma^-)$ is a submartingale.
Write
\begin{align*}
M(t)&=M(t\wedge\tau)=y_n(u+h,t\wedge\sigma^+\wedge\tau)y_n(u,t\wedge\sigma^-\wedge\tau)\\
&-y_n(u+h,t\wedge\sigma^+\wedge\tau)y_n(u-h,t\wedge\sigma^-\wedge\tau)\\
&+y_n(u,t\wedge\sigma^+\wedge\tau)y_n(u-h,t\wedge\sigma^-\wedge\tau)\\
&-y_n(u,t\wedge\sigma^+\wedge\tau)y_n(u,t\wedge\sigma^-\wedge\tau).
\end{align*}
The first three terms are martingales and the last term is a submartingale, so $M(\cdot)$ is a supermartingale.

Note that $M(T)\geq\lambda^2\I_{\{\sigma^+\vee\sigma^-<T\}}$. Hence
\begin{align*}
\p\{\|y_n(u&+h,\cdot)-y_n(u,\cdot)\|\geq\lambda,\ \|y_n(u,\cdot)-y_n(u-h,\cdot)\|\geq\lambda\}\\
&\leq\p\{\sigma^+\vee\sigma^-<T\}\leq\frac{\E M(T)}{\lambda^2}\leq\frac{\E M(0)}{\lambda^2}\\
&=\frac{1}{\lambda^2}(y_n(u+h,0)-y_n(u,0))(y_n(u,0)-y_n(u-h,0))\leq\frac{9h^2}{\lambda^2}.
\end{align*}
\end{proof}

\begin{lem}\label{lemma_first_points}
For all $\beta>1$
$$
\lim_{\delta\to 0}\sup\limits_{n\geq 1}\E\left[\|y_n(\delta,\cdot)-y_n(0,\cdot)\|^{\beta}\wedge 1\right]=0.
$$
\end{lem}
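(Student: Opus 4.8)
The plan is to identify the left-hand side with the gap between the two extreme particles involved. For $\delta\in(0,1)$ the definition of $y_n$ gives $y_n(0,\cdot)=x_1^n(\cdot)$ and $y_n(\delta,\cdot)=x_{k_n}^n(\cdot)$ with $k_n=\lfloor n\delta\rfloor+1\in[n]$, so that
$$
y_n(\delta,\cdot)-y_n(0,\cdot)=x_{k_n}^n(\cdot)-x_1^n(\cdot)=:Z_n(\cdot).
$$
By $(F3)$ the process $Z_n$ is nonnegative, and by $(F1)$ it is a continuous $(\F_t^n)$-martingale, hence a nonnegative supermartingale; by $(F2)$ its starting point is small uniformly in $n$, namely $Z_n(0)=\frac{k_n-1}{n}=\frac{\lfloor n\delta\rfloor}{n}\le\delta$.

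The first step is to apply Doob's maximal inequality for nonnegative supermartingales: for every $\lambda>0$,
$$
\p\{\|Z_n\|\ge\lambda\}=\p\Bigl\{\sup_{t\in[0,T]}Z_n(t)\ge\lambda\Bigr\}\le\frac{\E Z_n(0)}{\lambda}\le\frac{\delta}{\lambda}.
$$
It is worth stressing why this suffices despite the diffusion rate of $x_1^n$ blowing up as $t\to0$: we never estimate a single particle, only the gap $Z_n$, and for a nonnegative supermartingale it is precisely this weak $L^1$-type bound (rather than an $L^\beta$ maximal inequality, which would be unavailable here) that we have — and, thanks to the truncation $\wedge\,1$, it is all we need.

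The second step turns this tail bound into the moment estimate. Writing the truncated $\beta$-th moment as a tail integral,
$$
\E\bigl[\|Z_n\|^\beta\wedge1\bigr]=\int_0^1\p\bigl\{\|Z_n\|>s^{1/\beta}\bigr\}\,ds\le\int_0^1\Bigl(1\wedge\frac{\delta}{s^{1/\beta}}\Bigr)\,ds=:g(\delta),
$$
a quantity that no longer depends on $n$. Splitting the last integral at $s=\delta^\beta$ and integrating gives $g(\delta)=\delta^\beta+\frac{\beta}{\beta-1}(\delta-\delta^\beta)$ for $\delta\in(0,1)$, so $g(\delta)\le\delta^\beta+\frac{\beta}{\beta-1}\,\delta\to0$ as $\delta\to0$ since $\beta>1$. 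Taking the supremum over $n\ge1$ and then letting $\delta\to0$ yields the claim.

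I do not expect a genuine obstacle. The only conceptual point is the one highlighted above: one must resist trying to control $x_1^n$ directly (impossible near $t=0$) and instead work with the nonnegative supermartingale $Z_n$, for which the cheap maximal inequality together with the built-in truncation does the job; the remaining estimates are routine.
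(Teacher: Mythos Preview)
Your proof is correct. Both arguments rest on the same observation: $Z_n=y_n(\delta,\cdot)-y_n(0,\cdot)$ is a nonnegative continuous martingale with $Z_n(0)\le\delta$, and a Doob-type maximal inequality then gives a bound uniform in $n$. The implementations differ slightly. The paper stops $Z_n$ at the first time it reaches $1$, applies Doob's $L^\beta$ maximal inequality to the stopped (hence bounded) martingale, uses $x^\beta\le x$ on $[0,1]$, and then optional stopping to recover the bound $C_\beta\delta$. You instead use the weak-type (supermartingale) maximal inequality $\p\{\|Z_n\|\ge\lambda\}\le\delta/\lambda$ directly and integrate the tail via the layer-cake formula, exploiting $\beta>1$ only through the integrability of $s^{-1/\beta}$ on $(0,1)$. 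Your route avoids the stopping-time device and yields the explicit bound $\tfrac{\beta}{\beta-1}\delta+\delta^\beta$; the paper's route is a touch shorter on the page but hides the constant in $C_\beta$. Substantively the two are equivalent.
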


\begin{proof}
Set
$$
\sigma_{\delta}=\{t:\ y_n(\delta,t)-y_n(0,t)=1\}\wedge T.
$$
The assertion of the lemma follows from the inequalities
\begin{align*}
\E\Bigg[\sup_{t\in[0,T]}(y_n(\delta,t)&-y_n(0,t))^{\beta}\wedge 1\Bigg] =\E\sup_{t\in[0,T]}(y_n(\delta,t\wedge\sigma_{\delta})-y_n(0,t\wedge\sigma_{\delta}))^{\beta}\\
&\leq C_{\beta}\E(y_n(\delta,T\wedge\sigma_{\delta})-y_n(0,T\wedge\sigma_{\delta}))^{\beta}\\
&\leq C_{\beta}\E(y_n(\delta,T\wedge\sigma_{\delta})-y_n(0,T\wedge\sigma_{\delta}))\leq C_{\beta}\delta.
\end{align*}
\end{proof}

\begin{lem}\label{lemma_enequality_for_stop_time}
Let $\xi(t),\ t\in[0,T]$, be a continuous local square integrable martingale starting from 0 and $w(t),\ t\geq 0$, be a Wiener process. Denote for a fixed $a\in\R$
$$
\tau=\inf\{t:\ \xi(t)=a\}\wedge T.
$$
If there exists a constant $b>0$ such that
$$
\langle\xi(\cdot)\rangle_t\geq bt,\ t\in[0,\tau],
$$
then
$$
\p\{\tau\geq t\}\leq\p\{\sigma\geq t\},\quad t\in[0,T],
$$
and
$$
\E\tau\leq\E\sigma,
$$
where $\sigma=\inf\{t:\ w(bt)=a\}\wedge T$.
\end{lem}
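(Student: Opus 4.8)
The plan is to use a time-change argument to reduce the stopped martingale $\xi$ to a Brownian motion and then compare hitting times. Since $\xi$ is a continuous local square integrable martingale starting from $0$, by the Dambis--Dubins--Schwarz theorem there is (possibly on an enlarged probability space) a Brownian motion $B$ with $\xi(t)=B(\langle\xi\rangle_t)$ for all $t$, and $\langle\xi\rangle$ is a continuous nondecreasing adapted process. The event $\{\tau\ge t\}$ is the event that $\xi$ has not yet hit $a$ by time $t$, i.e. that $B$ has not hit $a$ on the time interval $[0,\langle\xi\rangle_t]$. Writing $T_a^B=\inf\{s:\ B(s)=a\}$ for the hitting time of $a$ by $B$, we have $\{\tau\ge t\}=\{T_a^B\ge\langle\xi\rangle_t\}$ (on $\{t\le T\}$), so the hypothesis $\langle\xi\rangle_t\ge bt$ on $[0,\tau]$ gives $\{\tau\ge t\}\subseteq\{T_a^B\ge bt\}$. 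On the other hand $w(b\cdot)$ is itself a Brownian motion run at speed $b$, so $\{\sigma\ge t\}=\{\inf\{s:\ w(bs)=a\}\ge t\}=\{T_a^w\ge bt\}$ where $w(bs)=\tilde w(b s)$ hits $a$ for the first time at $bs$; equating in law, $\p\{T_a^B\ge bt\}=\p\{T_a^w\ge bt\}=\p\{\sigma\ge t\}$. Combining the inclusion with this identity in law yields $\p\{\tau\ge t\}\le\p\{\sigma\ge t\}$ for every $t\in[0,T]$.

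The tail bound then gives the expectation bound by integration: since both $\tau$ and $\sigma$ are nonnegative and bounded by $T$, we have $\E\tau=\int_0^T\p\{\tau\ge t\}\,dt\le\int_0^T\p\{\sigma\ge t\}\,dt=\E\sigma$.

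One technical point to handle carefully is that the hypothesis $\langle\xi\rangle_t\ge bt$ is only assumed on the stochastic interval $[0,\tau]$, so the comparison of $B$ with $w(b\cdot)$ must be made only up to the hitting time. Concretely, on $\{\tau\ge t\}$ one has $t\le\tau$, hence $\langle\xi\rangle_t\ge bt$ holds on this event, which is exactly what is needed for the inclusion $\{\tau\ge t\}\subseteq\{T_a^B\ge bt\}$; there is no need for any growth bound on $\langle\xi\rangle$ past $\tau$. A second minor point is that $\xi$ is only a \emph{local} square integrable martingale, so one should invoke the DDS representation in its general form (allowing the enlargement of the space when $\langle\xi\rangle_\infty<\infty$), or, alternatively, argue directly via optional stopping on $\xi(\cdot\wedge\tau)^2-\langle\xi\rangle_{\cdot\wedge\tau}$ and a comparison of the time-changed processes; either route is routine. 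The only genuine content is the observation that a lower bound on the quadratic variation slows down the hitting of level $a$, which the time change makes transparent; I expect no serious obstacle beyond bookkeeping of the stopped processes.
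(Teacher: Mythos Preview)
Your proposal is correct and follows essentially the same route as the paper: represent $\xi$ via the Dambis--Dubins--Schwarz time change as $\xi(t)=\widetilde w(\langle\xi\rangle_t)$, then use $\langle\xi\rangle_t\ge bt$ on $[0,\tau]$ to deduce the pointwise comparison $\tau\le(\widetilde\sigma)\wedge T$ (equivalently your inclusion $\{\tau\ge t\}\subseteq\{T_a^B\ge bt\}$), and conclude by equality in law of $\widetilde w$ and $w$. One small correction: your claimed \emph{equality} $\{\tau\ge t\}=\{T_a^B\ge\langle\xi\rangle_t\}$ is only guaranteed as the inclusion $\subseteq$ (the reverse can fail on a null-like boundary event when $\langle\xi\rangle$ is flat after $\tau$), but that inclusion is exactly what you use, so the argument stands.
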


\begin{proof}
{\col Since a continuous local martingale is necessarily a local square integrable martingale, there exists a Wiener process $\widetilde{w}(t),\ t\geq 0$, such that
$$
\xi(t)=\widetilde{w}(\langle\xi(\cdot)\rangle_t),
$$
by Theorem~2.7.2'~\cite{Watanabe:1981:en}.}
Denote
$$
\widetilde{\sigma}=\inf\{t:\ \widetilde{w}(bt)=a\}.
$$
It is easy to see that $b\widetilde{\sigma}\geq\langle\xi(\cdot)\rangle_{\tau}\geq b\tau$. Since $\tau\leq T$, $\tau\leq\widetilde{\sigma}\wedge T$. The latter inequality proves the lemma.
\end{proof}

\begin{lem}\label{lemma_tightness_in_C}
For all $\eps>0$ and $u\in[0,1]$ the sequence $\{y_n(u,t),\ t\in[\eps,T]\}_{n\geq 1}$ is tight in $C[\eps,T]$.
\end{lem}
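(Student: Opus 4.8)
The plan is to verify the two ingredients of the standard Arzel\`a--Ascoli-type tightness criterion in $C[\eps,T]$: tightness of the one-dimensional distributions at the left endpoint $t=\eps$, and a uniform-in-$n$ control of the modulus of continuity $w_{[\eps,T]}(f,\delta)=\sup\{|f(t)-f(s)|:\ s,t\in[\eps,T],\ |t-s|\le\delta\}$. The first ingredient is immediate: writing $k=k_n(u)$ for the index with $\frac{k-1}{n}\le u<\frac{k}{n}$ (and $k=n$ when $u=1$), we have $y_n(u,\eps)=x_k^n(\eps)$, so $\E|y_n(u,\eps)|\le C$ for all $n$ by Lemma~\ref{lemma_expectation}~$(i)$, and Markov's inequality gives tightness of the laws of $y_n(u,\eps)$ on $\R$.

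For the modulus, the idea is that after time $\eps$ the quadratic variation of $y_n(u,\cdot)$ accumulates at a controlled rate as soon as the cluster mass $m_k^n(\eps)$ is bounded below, while the event that it is \emph{not} bounded below has probability vanishing uniformly in $n$ because the particles coalesce quickly. Fix $\mu>0$ and let $\Omega_{n,\mu}=\{m_k^n(\eps)\ge\mu\}$. By $(F4)$ and monotonicity of $m_k^n$, on $\Omega_{n,\mu}$ one has $\langle y_n(u,\cdot)\rangle_t-\langle y_n(u,\cdot)\rangle_s=\int_s^t\frac{dr}{m_k^n(r)}\le\frac{t-s}{\mu}$ for all $\eps\le s\le t\le T$. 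Applying the Dambis--Dubins--Schwarz time change to the $(\F_{\eps+\cdot}^n)$-martingale $t\mapsto y_n(u,\eps+t)-y_n(u,\eps)$, we may write this process as $\beta_n(A_n(t))$ with $\beta_n$ a Wiener process and $A_n(t)=\langle y_n(u,\cdot)\rangle_{\eps+t}-\langle y_n(u,\cdot)\rangle_\eps$ a continuous nondecreasing process which, on $\Omega_{n,\mu}$, is $\frac1\mu$-Lipschitz with $A_n(0)=0$, hence maps $[0,T-\eps]$ into $[0,(T-\eps)/\mu]$. Therefore, on $\Omega_{n,\mu}$ the modulus $w_{[\eps,T]}(y_n(u,\cdot),\delta)$ is dominated by $w_{[0,(T-\eps)/\mu]}(\beta_n,\delta/\mu)$, and so for every $\rho>0$
$$
\p\{w_{[\eps,T]}(y_n(u,\cdot),\delta)>\rho\}\le\p\{w_{[0,(T-\eps)/\mu]}(\beta_n,\delta/\mu)>\rho\}+\p\{m_k^n(\eps)<\mu\}.
$$
The first term does not depend on $n$ and tends to $0$ as $\delta\to0$ by uniform continuity of Brownian paths.

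It thus remains to show $\sup_n\p\{m_k^n(\eps)<\mu\}\to0$ as $\mu\to0$, and this coalescence estimate is the heart of the proof and the main obstacle. The point is that clusters of the finite system are intervals of indices (by the ordering $(F3)$ together with the fact that particles stay together after meeting), so on $\{m_k^n(\eps)<\mu\}$ the particle $x_k^n$ has not yet coalesced at time $\eps$ with one of the particles of index $k\pm\lceil n\mu\rceil$ (at least one such index lies in $[n]$ once $\mu$ is small, e.g. $\mu<\frac15$, for $n$ not too small, and the bad event is empty when $n<\frac1\mu$ since $m_k^n\ge\frac1n$). For such an index $k'$, consider the difference of the two trajectories (larger minus smaller), which by $(F3)$ is a nonnegative continuous martingale $Z$ with $Z(0)=\frac{|k-k'|}{n}\le\mu+\frac1n$; by $(F5)$ the cross term vanishes up to $\tau_{k,k'}^n$, so $(F4)$ and $m_k^n\le1$ give $\langle Z\rangle_t\ge 2t$ for $t\le\tau_{k,k'}^n$. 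Lemma~\ref{lemma_enequality_for_stop_time} applied to $Z-Z(0)$ then bounds $\p\{\tau_{k,k'}^n\ge\eps\}$ by $\p\{\sigma\ge\eps\}$ with $\sigma$ the first time a time-changed Wiener process $w(2\cdot)$ hits $-Z(0)$, and the reflection principle yields $\p\{\sigma\ge\eps\}\le\p\{|w(2\eps)|<Z(0)\}\le Z(0)/\sqrt{\pi\eps}$. Hence $\sup_n\p\{m_k^n(\eps)<\mu\}\le 2\mu/\sqrt{\pi\eps}$ for $\mu$ small. Combining with the displayed inequality gives $\lim_{\delta\to0}\limsup_n\p\{w_{[\eps,T]}(y_n(u,\cdot),\delta)>\rho\}\le 2\mu/\sqrt{\pi\eps}$ for every $\mu>0$, hence $=0$; together with tightness of the marginals at $\eps$ this proves the lemma.
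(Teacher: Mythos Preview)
Your proof is correct and shares its key idea with the paper's: both arguments hinge on the same coalescence estimate $\p\{m_k^n(\eps)<\mu\}\le C\mu/\sqrt{\eps}$, obtained by applying Lemma~\ref{lemma_enequality_for_stop_time} to the difference of two nearby particles and then invoking the reflection principle. The difference lies in the tightness criterion and in how the ``good'' event $\{m_k^n(\eps)\ge\mu\}$ is exploited. The paper uses the Aldous criterion (Theorem~3.6.4 in~\cite{Dawson:1993}): for an arbitrary sequence of stopping times $\sigma_n$ and $\delta_n\searrow0$, it shows that $\xi_n(t)\I_{\{m_n(u,\eps)\ge\gamma\}}$ is an $(\F^{y_n}_{\sigma_n+t})$-martingale via optional sampling, and then bounds $\E[\xi_n(\delta_n)^2\I_{\{m_n(u,\eps)\ge\gamma\}}]$ directly by $\delta_n/\gamma$ through the quadratic variation. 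You instead use the classical modulus-of-continuity criterion and, on the good event, reduce to the Brownian modulus via the Dambis--Dubins--Schwarz time change. Your route avoids the optional-sampling manipulations with random stopping times at the cost of invoking DDS; the paper's route stays entirely within elementary martingale moment bounds and is perhaps more in keeping with the martingale framework used throughout. Both approaches are equally valid here because the quadratic variation is absolutely continuous with a density bounded by $1/\mu$ on the good event, which is exactly what makes either the second-moment bound or the Lipschitz time-change work.
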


\begin{proof}
To prove the lemma we use the Aldous tightness criterion (see e.g. Theorem~3.6.4.~\cite{Dawson:1993}). So, for the tightness of $\{y_n(u,t),\ t\in[\eps,T]\}_{n\geq 1}$ in the space $C[\eps,T]$ we have to check the following properties
\begin{enumerate}
\item[(A1)] for all $t\in[\eps,T]$ the sequence $\{y_n(u,t)\}_{n\geq 1}$ is tight in $\R$;

\item[(A2)] for all $r>0$, a set of stopping times $\{\sigma_n\}_{n\geq 1}$ taking values in $[\eps,T]$ and a sequence $\delta_n\searrow 0$
$$
\lim_{n\to\infty}\p\{|y_n(u,\sigma_n+\delta_n)-y_n(u,\sigma_n)|\geq r\}=0.
$$
\end{enumerate}

Note that property $(A1)$ follows from Lemma~\ref{lemma_expectation} and Chebyshev's inequality. In order to prove $(A2)$, we will first estimate the probability of the event $\{m_n(u,\eps)<\gamma\}$, for all $\gamma\in\left(0,\frac{1}{2}\right)$. We can assume, without loss of generality, that $u\in\left[0,\frac{1}{2}\right]$. {\col Set $\xi(t)=y_n(u+\gamma,t)-y_n(u,t)$, $t\in[0,T]$, and note that
$$
\langle\xi(\cdot)\rangle_t=\int_0^t\left(\frac{1}{m_n(u+\gamma,s)}+\frac{1}{m_n(u,s)}\right)ds\geq 2t,\ \ t\in[0,\tau^n_{u,u+\gamma}].
$$
So, using Lemma~\ref{lemma_enequality_for_stop_time} with $b=2$, we obtain
$$
\p\{m_n(u,\eps)<\gamma\}\leq\p\{\tau^n_{u,u+\gamma}\geq\eps\}\leq\p\{\widetilde{\tau}_{u,u+\gamma}\geq\eps\}\leq C\gamma,
$$
}
where
\begin{equation}\label{f_m_n}
m_n(u,\cdot)=\begin{cases}
m_k^n(\cdot),\quad \frac{k-1}{n}\leq u<\frac{k}{n},\ \ k\in[n],\\
m_n^n(\cdot),\quad u=1,
\end{cases}
\end{equation}
\begin{equation}\label{f_time_of_meeting_n}
\tau^n_{u,v}=\inf\{t:\ y_n(u,t)=y_n(v,t)\}\wedge T,\quad u,v\in[0,1],
\end{equation}
and $\widetilde{\tau}_{u,v}$ is a time of meeting of two independent Wiener processes starting from $u$ and $v$ respectively. Let $\eps_1>0$ be fixed. Choose $\gamma>0$ so that
$$
\p\{m_n(u,\eps)<\gamma\}<\frac{\eps_1}{2}.
$$
Next, set
$$
\xi_n(t)=y_n(u,\sigma_n+t)-y_n(u,\sigma_n)
$$
and estimate the following probability
$$
\p\{|\xi_n(t)|\geq r\}\leq\p\left\{|\xi_n(t)|\I_{\{m_n(u,\eps)\geq\gamma\}}\geq r\right\}+\p\{m_n(u,\eps)<\gamma\}.
$$
To prove the smallness of the first term for large $n$ we will show that $\xi_n(\cdot)\I_{\{m_n(u,\eps)\geq\gamma\}}$ is an $(\F_{\sigma_n+t}^{y_n})$-martingale. So, assume $s\leq t$ and consider
\begin{align*}
\E\big(\xi_n(&t)\I_{\{m_n(u,\eps)\geq\gamma\}}\big|\F_{\sigma_n+s}^{y_n}\big)\\ &=\E\left(\left.(y_n(u,\sigma_n+t)-y_n(u,\sigma_n))\I_{\{m_n(u,\eps)\geq\gamma\}}\right|\F_{\sigma_n+s}^{y_n}\right)\\
&=\I_{\{m_n(u,\eps)\geq\gamma\}}\E\left(\left.y_n(u,\sigma_n+t)-y_n(u,\sigma_n)\right|\F_{\sigma_n+s}^{y_n}\right)\\
&=\I_{\{m_n(u,\eps)\geq\gamma\}}(y_n(u,\sigma_n+s)-y_n(u,\sigma_n))=\xi_n(s)\I_{\{m_n(u,\eps)\geq\gamma\}}.
\end{align*}
Here we used the optional sampling theorem (see e.g. Theorem~1.6.11~\cite{Watanabe:1981:en}) and the inclusion $\F_{\eps}^{y_n}\subseteq\F_{\sigma_n+s}^{y_n}$. Now we are ready to estimate
\begin{align*}
\p\bigg\{|\xi_n(&\delta_n)|\I_{\{m_n(u,\eps)\geq\gamma\}}\geq r\bigg\}\leq\frac{1}{r^2}\E\left[\xi_n^2(\delta_n)\I_{\{m_n(u,\eps)\geq\gamma\}}\right]\\
&=\frac{1}{r^2}\E\left[\E\left(\left.(y_n(u,\sigma_n+\delta_n)-y_n(u,\sigma_n))^2\I_{\{m_n(u,\eps)\geq\gamma\}}\right| \F_{\sigma_n}^{y_n}\right)\right]\\
&=\frac{1}{r^2}\E\left[\I_{\{m_n(u,\eps)\geq\gamma\}}\E\left(\left.(y_n(u,\sigma_n+\delta_n)-y_n(u,\sigma_n))^2\right| \F_{\sigma_n}^{y_n}\right)\right]
\end{align*}
\begin{align*}
&=\frac{1}{r^2}\E\left[\I_{\{m_n(u,\eps)\geq\gamma\}}\E\left(\left.\int_{\sigma_n}^{\sigma_n+\delta_n}\frac{ds}{m_n(u,s)}\right| \F_{\sigma_n}^{y_n}\right)\right]\\
&=\frac{1}{r^2}\E\left[\I_{\{m_n(u,\eps)\geq\gamma\}}\int_{\sigma_n}^{\sigma_n+\delta_n}\frac{ds}{m_n(u,s)}\right]\\
&\leq\frac{4}{r^2}\E\left[\int_{\sigma_n}^{\sigma_n+\delta_n}\frac{ds}{\gamma\vee m_n(u,s)}\right]\leq\frac{\delta_n}{r^2\gamma}.
\end{align*}
Thus, there exists $N\in\N$ such that for all $n\geq N$ $\p\left\{|\xi_n(\delta_n)|\I_{\{m_n(u,\eps)\geq\gamma\}}\geq r\right\}<\frac{\eps_1}{2}$. This completes the proof.
\end{proof}

\begin{proof}[Proof of Proposition~\ref{prop_tightness_in_D_eps}]
By Lemmas~\ref{lemma_three_points},~\ref{lemma_first_points} and \ref{lemma_tightness_in_C}, Theorems~3.8.6 and~3.8.8~\cite{Ethier:1986}, Remark~3.8.9~\cite{Ethier:1986}, we obtain the assertion of the proposition.
\end{proof}

Proposition~\ref{prop_tightness_in_D_eps} and Proposition~\ref{prop_tightness_apend} immediately imply the tightness of $\{y_n(u,t),\ u\in[0,1],\ t\in(0,T]\}$.

\begin{prp}
The sequence $\{y_n(u,t),\ u\in[0,1],\ t\in(0,T]\}$ is tight in $D([0,1],C(0,T])$.
\end{prp}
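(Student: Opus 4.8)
The plan is to obtain the claim from Proposition~\ref{prop_tightness_in_D_eps} by a projective-limit argument, exploiting that the topology of $C(0,T]$ is generated by the countable increasing family of seminorms $f\mapsto\|f\|_{[1/k,T]}$, $k\ge k_0$, where $k_0$ is chosen so that $1/k_0<T$; equivalently $C(0,T]$ is metrized by $d(f,g)=\sum_{k\ge k_0}2^{-k}\big(\|f-g\|_{[1/k,T]}\wedge1\big)$. For each $\eps>0$ the restriction map $r_\eps\colon C(0,T]\to C[\eps,T]$ is continuous, so composing with it maps $D([0,1],C(0,T])$ continuously into $D([0,1],C[\eps,T])$: indeed the Skorohod time-changes act on the label $u\in[0,1]$ only, and $\|\cdot\|_{[\eps,T]}\le\|\cdot\|_{[1/k,T]}\le 2^{k}\,d(\cdot,\cdot)$ once $1/k\le\eps$, so the same deformation that witnesses Skorohod closeness in $D([0,1],C(0,T])$ witnesses it in $D([0,1],C[\eps,T])$. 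Under this identification the projection $R_\eps(y_n)$ of $y_n$ is exactly the sequence $\{y_n(u,t),\ u\in[0,1],\ t\in[\eps,T]\}_{n\ge1}$, which is tight in $D([0,1],C[\eps,T])$ for every $\eps>0$ by Proposition~\ref{prop_tightness_in_D_eps}.

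Next I would invoke the abstract tightness transfer given by Proposition~\ref{prop_tightness_apend}, which is precisely designed to glue tightness on the truncated intervals $[1/k,T]$ into tightness on $(0,T]$. The underlying fact is an Arzel\`a--Ascoli type characterisation of relative compactness in $D([0,1],C(0,T])$: a set is relatively compact iff for every $u$ its trace is relatively compact in $C(0,T]$ --- equivalently in $C[1/k,T]$ for every $k$ --- and the Skorohod oscillation modulus with respect to $d$ vanishes uniformly as the mesh $\delta\to0$; and by the displayed bound on $d$ the latter follows from the corresponding uniform control of the moduli in each $D([0,1],C[1/k,T])$ together with a geometric tail estimate in $k$. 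Feeding the sequences $\{R_{1/k}(y_n)\}_{n\ge1}$, $k\ge k_0$ --- all of them tight by Proposition~\ref{prop_tightness_in_D_eps} --- into Proposition~\ref{prop_tightness_apend} then yields tightness of $\{y_n(u,t),\ u\in[0,1],\ t\in(0,T]\}_{n\ge1}$ in $D([0,1],C(0,T])$.

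The one place where care is genuinely needed --- and the reason Proposition~\ref{prop_tightness_apend} is stated separately --- is the compatibility of the $J_1$ Skorohod structure with the projective limit in $\eps$: one must check that a diagonal extraction, subsequence by subsequence, of Skorohod-convergent projections in the $D([0,1],C[1/k,T])$ can be reassembled into a single Skorohod-convergent sequence in $D([0,1],C(0,T])$. The key simplification, already used above, is that the Skorohod reparametrisations here deform the spatial label $u$ and are common to all truncation levels, so there is no conflict between the deformations required at different $k$ and the diagonal procedure goes through. Since this is exactly what Proposition~\ref{prop_tightness_apend} delivers, the present proof consists in combining it with Proposition~\ref{prop_tightness_in_D_eps}, as claimed.
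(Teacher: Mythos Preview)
Your proposal is correct and follows essentially the same approach as the paper: the paper's proof is the single sentence that the result follows immediately from Proposition~\ref{prop_tightness_in_D_eps} together with Proposition~\ref{prop_tightness_apend}. You supply more commentary on why the projective-limit/restriction argument behind Proposition~\ref{prop_tightness_apend} works, but the logical content is the same combination of those two propositions.
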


\subsection{Extension to the space $D([0,1],C[0,T])$}\label{section_extension}

It should be noted that since the sequence $\{y_n(u,t),\ u\in[0,1],\ t\in(0,T]\}$ is tight in the separable metric space $D([0,1],C(0,T])$, it has limit points (in the weak topology), by Prokhorov's theorem. In this section we will show that every limit point of the sequence can be extended to the space $D([0,1],C[0,T])$. Denote by $C_b^2(\R)$ the set of twice continuously differentiable functions on $\R$ which are bounded together with their derivatives.

\begin{prp}\label{prop_tight_xi}
Let $\varphi\in C_b^2(\R)$ and
\begin{equation}\label{f_xi}
\xi_n(t)=\int_0^1\varphi(y_n(u,t))du,\quad t\in[0,T].
\end{equation}
Then the sequence $\{\xi_n(t),\ t\in[0,T]\}_{n\geq 1}$ is tight in $C[0,T]$.
\end{prp}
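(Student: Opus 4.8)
The plan is to apply the Aldous tightness criterion in $C[0,T]$ to the real-valued process $\xi_n$, exactly as was done in Lemma~\ref{lemma_tightness_in_C} for a single particle, but now exploiting the fact that $\xi_n$ is an integral over all particles and therefore has a quadratic variation that is controlled \emph{uniformly} in $n$ all the way down to $t=0$ — this is the whole point of the mass conservation. So I would check: (A1) for each fixed $t$, the family $\{\xi_n(t)\}_{n\ge1}$ is tight in $\R$; and (A2) for every $r>0$, every sequence of $(\F^{y_n}_t)$-stopping times $\sigma_n$ with values in $[0,T]$, and every $\delta_n\searrow0$, one has $\p\{|\xi_n(\sigma_n+\delta_n)-\xi_n(\sigma_n)|\ge r\}\to0$. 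Property (A1) is immediate: $|\varphi|\le\|\varphi\|_\infty$, so $|\xi_n(t)|\le\|\varphi\|_\infty$ deterministically, and the family is even bounded.

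The substantive step is (A2), and the key is to show that $\xi_n$ is a semimartingale whose martingale part has a quadratic variation bounded by $Ct$ and whose bounded-variation part has total variation bounded by $Ct$, with $C$ depending only on $\|\varphi'\|_\infty,\|\varphi''\|_\infty$ but not on $n$. First I would apply Itô's formula to $\varphi(y_n(u,\cdot))$ using (F1) and (F4):
\begin{equation*}
\varphi(y_n(u,t))=\varphi(u)+\int_0^t\varphi'(y_n(u,s))\,dy_n(u,s)+\frac12\int_0^t\varphi''(y_n(u,s))\,\frac{ds}{m_n(u,s)},
\end{equation*}
then integrate in $u$ over $[0,1]$ (a stochastic Fubini argument, which needs a word of justification but is routine here since everything is bounded and $y_n$ is piecewise constant in $u$). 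Write the result as $\xi_n(t)=\xi_n(0)+N_n(t)+V_n(t)$ with $N_n(t)=\int_0^1\int_0^t\varphi'(y_n(u,s))\,dy_n(u,s)\,du$ a continuous martingale and $V_n(t)=\frac12\int_0^1\int_0^t\varphi''(y_n(u,s))\,m_n(u,s)^{-1}\,ds\,du$. For the finite-variation part, $|V_n(t)|\le\frac12\|\varphi''\|_\infty\int_0^t\left(\int_0^1\frac{du}{m_n(u,s)}\right)ds$, and the inner integral equals $\sum_{k}\frac1n\cdot\frac1{m_k^n(s)}=\sum_{\text{clusters}}(\text{cluster mass})\cdot\frac1{(\text{cluster mass})}=(\#\text{clusters at time }s)\cdot 0\dots$ — more precisely it is the number of distinct cluster positions, which is $\le$ the number of clusters, but in fact $\int_0^1 m_n(u,s)^{-1}\,du=\sum_{\text{distinct clusters }C}\Leb(C)\cdot\Leb(C)^{-1}=\#\{\text{clusters}\}$; this is not bounded by a constant. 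The correct bound uses instead that $\int_0^1 m_n(u,s)^{-1}\,du$ is bounded by $n$ times the smallest cluster mass reciprocal — no: the right move, as in Lemma~\ref{lemma_expectation}(ii), is to bound $\E|V_n(t)|$ via $\E\int_0^t\int_0^1 m_n(u,s)^{-1}du\,ds$ and control the expected number of clusters by the meeting-time estimate $\p\{m_n(u,\eps)<\gamma\}\le C\gamma$ from Lemma~\ref{lemma_tightness_in_C}, which bounds $\E[m_n(u,s)^{-1}]$ uniformly for $s$ bounded away from $0$; near $t=0$ one integrates the bound $\E[m_n(u,s)^{-1}]\le C/\sqrt{s}$-type estimate, whose time integral is finite. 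This last point is where the real work lies.

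For the martingale part, by (F4) and (F5), $\langle N_n\rangle_t=\int_0^1\int_0^1\int_0^t\varphi'(y_n(u,s))\varphi'(y_n(v,s))\,\I_{\{\tau^n_{u,v}\le s\}}\,d\langle y_n(u,\cdot)\rangle_s\,du\,dv\le\|\varphi'\|_\infty^2\int_0^t\int_0^1\Big(\int_0^1\I_{\{\tau^n_{u,v}\le s\}}dv\Big)\frac{du}{m_n(u,s)}\,ds=\|\varphi'\|_\infty^2\int_0^t\int_0^1\frac{m_n(u,s)}{m_n(u,s)}\,du\,ds=\|\varphi'\|_\infty^2\,t$, using that $\int_0^1\I_{\{\tau^n_{u,v}\le s\}}dv=m_n(u,s)$. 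Thus $\langle N_n\rangle_t\le\|\varphi'\|_\infty^2 t$ \emph{uniformly in $n$ and down to $t=0$} — the cancellation is exact and is precisely the mass-conservation mechanism. Then (A2) follows by splitting $|\xi_n(\sigma_n+\delta_n)-\xi_n(\sigma_n)|\le|N_n(\sigma_n+\delta_n)-N_n(\sigma_n)|+|V_n(\sigma_n+\delta_n)-V_n(\sigma_n)|$; the martingale increment is handled by optional sampling plus Chebyshev and the bound $\E[(N_n(\sigma_n+\delta_n)-N_n(\sigma_n))^2\mid\F^{y_n}_{\sigma_n}]=\E[\langle N_n\rangle_{\sigma_n+\delta_n}-\langle N_n\rangle_{\sigma_n}\mid\F^{y_n}_{\sigma_n}]\le\|\varphi'\|_\infty^2\delta_n$, and the $V_n$ increment by the expected-cluster-count estimate above, giving a bound that $\to0$ as $\delta_n\to0$. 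The main obstacle, as indicated, is getting the finite-variation term $V_n$ under uniform control near $t=0$: one must show $\sup_n\E\int_0^\delta\int_0^1 m_n(u,s)^{-1}\,du\,ds\to0$ as $\delta\to0$, which requires the quantitative coalescence estimate from Lemma~\ref{lemma_tightness_in_C} integrated against $ds$, rather than just the martingale cancellation that trivializes $\langle N_n\rangle$.
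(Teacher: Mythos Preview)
Your overall strategy matches the paper's exactly: Aldous criterion, It\^o decomposition $\xi_n=\xi_n(0)+N_n+V_n$, and the mass-cancellation identity $\langle N_n\rangle_t\le\|\dot\varphi\|_\infty^2\,t$ for the martingale part. That part of your argument is correct and essentially identical to the paper's computation of $\E M^2(t)$.

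The gap is in the bounded-variation term $V_n$, which you yourself flag, and it is twofold.

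First, the estimate $\p\{m_n(u,\eps)<\gamma\}\le C\gamma$ you quote from Lemma~\ref{lemma_tightness_in_C} has $C=C(\eps)\sim 1/\sqrt{\eps}$ (it is a Brownian hitting-time tail), so it cannot be integrated in $s$ down to $0$. The ``$\E[m_n(u,s)^{-1}]\le C/\sqrt{s}$-type estimate'' you invoke is in fact Lemma~\ref{lemma_estim_exspect_m}, proved only later and only for the limit $y$; it is not what the paper uses at this stage.

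Second, and more seriously, even granting a uniform bound on $\E\int_0^{\delta}\chi_n(s)\,ds$ where $\chi_n(s)=\int_0^1 m_n(u,s)^{-1}\,du$ is the cluster count, the Aldous criterion needs control of $\E\int_{\sigma_n}^{\sigma_n+\delta_n}\chi_n(s)\,ds$ for a \emph{random} stopping time $\sigma_n$, and monotonicity of $\chi_n$ does not reduce this to a deterministic interval. The paper resolves this by invoking the Markov property of the finite system at $\sigma_n$ (from~\cite{Konarovskiy:2011:TSP}): the post-$\sigma_n$ meeting times $\gamma_k^n(t)=\inf\{s:x_k^n(\sigma_n+s)=x_{k-1}^n(\sigma_n+s)\}\wedge t$ are compared, via Lemma~\ref{lemma_enequality_for_stop_time}, to the corresponding meeting times $\widehat\gamma_k^n(t)$ of an ordinary Arratia flow started from the random configuration $x^n(\sigma_n)$. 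One then uses the known estimate $\E\sum_k\widehat\gamma_k^n(t)\le C\,(z_n^n(0)-z_1^n(0))\sqrt{t}$ from Section~7.1 of~\cite{Dorogovtsev:2007:en}, together with $\E(x_n^n(\sigma_n)-x_1^n(\sigma_n))\le 1$ by optional sampling, to get $\E|A(\delta_n)|\le C\|\ddot\varphi\|_\infty\sqrt{\delta_n}\to0$. This Markov-plus-Arratia-comparison step is the missing ingredient in your sketch.
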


\begin{proof}
We use the Aldous tightness criterion to prove the proposition. By the boundedness of $\varphi$, the sequence $\{\xi_n(t)\}_{n\geq 1}$ is bounded for each $t\in[0,T]$. Hence, it is enough to check that for all $\eps>0$, a set of stopping times $\{\sigma_n\}_{n\geq 1}$ on $[0,T]$ and a sequence $\delta_n\searrow 0$ one has
\begin{equation}\label{f_different}
\lim_{n\to\infty}\p\{|\xi_n(\sigma_n+\delta_n)-\xi_n(\sigma_n)|\geq\eps\}=0.
\end{equation}
To show~\eqref{f_different} we consider the difference $\xi_n(\sigma_n+t)-\xi_n(\sigma_n)$ and use Ito's formula. So, we obtain
\begin{align*}
\xi_n(\sigma_n&+t)-\xi_n(\sigma_n)=\frac{1}{n}\sum_{k=1}^n[\varphi(x_k^n(\sigma_n+t))-\varphi(x_k^n(\sigma_n))]\\
&=\frac{1}{n}\sum_{k=1}^n\int_0^t\dot{\varphi}(x_k^n(\sigma_n+s))dx_k^n(\sigma_n+s)
\end{align*}
$$
+\frac{1}{2n}\sum_{k=1}^n\int_0^t\frac{\ddot{\varphi}(x_k^n(\sigma_n+s))}{m_k^n(\sigma_n+s)}ds=M(t)+\frac{1}{2}A(t).
$$
Next, estimate $\E|A(t)|$ and $\E|M(t)|$.

Denote the number of distinct points $x_k^n(\sigma_n+t)$, $k\in[n]$, by $\chi_n(t)$, i.e.
$$
\chi_n(t)=|\{x_k^n(\sigma_n+t),\ k\in[n]\}|.
$$
So,
\begin{gather*}
\E|A(t)|\leq\frac{\|\ddot{\varphi}\|}{n}\sum_{k=1}^n\int_0^t\frac{ds}{m_k^n(\sigma_n+s)}= \|\ddot{\varphi}\|\E\int_0^t\chi_n(s)ds=\|\ddot{\varphi}\|\sum_{k=1}^n\E\gamma_k^n(t),
\end{gather*}
where
\begin{align*}
\gamma_1^n(t)&=t,\\
\gamma_k^n(t)&=\inf\{s:\ x_k^n(\sigma_n+s)=x_{k-1}^n(\sigma_n+s)\}\wedge t,\ \ k=2,\ldots,n.
\end{align*}

Let $\{z_k^n(t),\ t\in[0,T],\ k\in[n]\}$ be the set of coalescing Brownian particles starting from non-random points and possessing $z_k^n(0)\leq z_{k+1}^n(0)$, $k\in[n-1],\ t\in[0,T]$, and let $\{\widetilde{x}_k^n(t),\ t\in[0,T],\ k\in[n]\}$ be the set of processes which satisfy conditions $(F1)$, $(F3)$--$(F5)$. Define $\widehat{\gamma}^n_k(t),\ k\in[n]$, and $\widetilde{\gamma}^n_k(t),\ k\in[n]$, in the same way as $\gamma_k^n(t),\ k\in[n]$, replacing $x_k^n(\sigma_n+\cdot)$ with $z_k^n(\cdot)$, $k\in[n]$, and $x_k^n(\sigma_n+\cdot)$ with $\widetilde{x}_k^n(\cdot)$, $k\in[n]$, respectively.

Using Markov's property of $\{x_k^n(t),\ t\in[0,1],\ k\in[n]\}$~\cite{Konarovskiy:2011:TSP} and Lemma~\ref{lemma_enequality_for_stop_time} we have
$$
\E\gamma_k^n(t)=\E(\E(\gamma_k^n(t)|\F_{\sigma_n}^n))= \E(\E_{x^n(\sigma_n)}\widetilde{\gamma}_k^n(t))\leq\E(\E_{x^n(\sigma_n)}\widehat{\gamma}_k^n(t)).
$$
It is well known that there exists a constant $C$, which does not depend on $n$ and $z_k^n(0),\ k\in[n]$, such that for all $t\in[0,T]$
$$
\E\left(\sum_{k=1}^n\widehat{\gamma}_k^n(t)\right)\leq C(z_n^n(0)-z_1^n(0))\sqrt{t}.
$$
(see Section~7.1.~\cite{Dorogovtsev:2007:en}).
Thus
$$
\E|A(t)|\leq\|\ddot{\varphi}\|\sum_{k=1}^n\E(\E_{x^n(\sigma_n)}\widehat{\gamma}_k^n(t))\leq C\E(x_n^n(\sigma_n)-x_1^n(\sigma_n))\sqrt{t} \leq C\sqrt{t}.
$$

Next, consider
\begin{align*}
(\E|M(&t)|)^2\leq\E M^2(t)=\E\left(\frac{1}{n}\sum_{k=1}^n\int_0^t\dot{\varphi}(x_k^n(\sigma_n+s))dx_k^n(\sigma_n+s)\right)^2\\
&=\frac{1}{n^2}\sum_{k=1}^n\sum_{l=1}^n \E\int_0^t \frac{\dot{\varphi}(x_k^n(\sigma_n+s))\dot{\varphi}(x_l^n(\sigma_n+s))}{m_k^n(\sigma_n+s)}\I_{\{\tau^n_{k,l}\leq\sigma_n+s\}}ds\\
&=\frac{1}{n}\sum_{k=1}^n\E\int_0^t \dot{\varphi}^2(x_k^n(\sigma_n+s))\sum_{l=1}^n \frac{\I_{\{\tau^n_{k,l}\leq\sigma_n+s\}}}{nm_k^n(\sigma_n+s)}ds\\
&=\frac{1}{n}\sum_{k=1}^n\E\int_0^t\dot{\varphi}^2(x_k^n(\sigma_n+s))ds\leq \|\dot{\varphi}^2\|t.
\end{align*}

Now from the obtained estimations of  $\E|A(t)|$ and $\E|M(t)|$ and Chebyshev's inequality we have~\eqref{f_different}. The proposition is proved.
\end{proof}

Note that, since the space $D([0,1],C(0,T])$ is separable, there exists a sequence $\{n'\}$ and a random element $\{y(u,t),\ u\in[0,1],\ t\in(0,T]\}$ in this space such that $y_{n'}$ tends to $y$ in distribution in $D([0,1],C(0,T])$, by Prokhorov's theorem~\cite{Billingsley:1999}. Next, from Skorohod's theorem (see Theorem~3.1.8~\cite{Ethier:1986}) we have the following result.

\begin{lem}\label{lemma_converg_a_s}
There exists a probability space with random elements $\{\widetilde{y}_{n'}(u,t),\\ u\in[0,1],\ t\in(0,T]\}_{n'}$ and $\{\widetilde{y}(u,t),\ u\in[0,1],\ t\in(0,T]\}$ taking values in $D([0,1],C(0,T])$ such that
\begin{enumerate}
\item[1)] for all $n'$ $\mathrm{Law}(\widetilde{y}_{n'})=\mathrm{Law}(y_{n'})$

\item[2)] $\widetilde{y}_{n'}\to\widetilde{y}$ in $D([0,1],C(0,T])$ a.s.
\end{enumerate}
\end{lem}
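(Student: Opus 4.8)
The plan is to deduce Lemma~\ref{lemma_converg_a_s} as a direct application of Skorokhod's representation theorem, the two preceding ingredients being already in place. First I would recall that the proposition just before the statement asserts that $\{y_n(u,t),\ u\in[0,1],\ t\in(0,T]\}_{n\geq1}$ is tight in $D([0,1],C(0,T])$; hence, by Prokhorov's theorem (see~\cite{Billingsley:1999}), one may extract a subsequence $\{n'\}$ along which $y_{n'}$ converges in distribution in $D([0,1],C(0,T])$ to some random element $y=\{y(u,t),\ u\in[0,1],\ t\in(0,T]\}$ of that space. This is exactly the convergence already recorded in the paragraph immediately preceding the lemma, so nothing new is needed here. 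It then remains only to transfer this convergence in law to an almost sure convergence on a common probability space.

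To do this I would invoke Theorem~3.1.8 of~\cite{Ethier:1986}, whose hypothesis is that the state space be a separable metric space; here the state space is $D([0,1],C(0,T])$. Separability of $D([0,1],C(0,T])$ follows from separability of $C(0,T]$, since the Skorokhod space $D([0,1],E)$ over a separable metric space $E$ is itself separable. In turn, $C(0,T]$ is separable: its topology is metrized by a countable combination such as $\varrho(f,g)=\sum_{k\geq1}2^{-k}\big(\|f-g\|_{[1/k\wedge T,\,T]}\wedge1\big)$ of the uniform metrics on the compacts $[1/k\wedge T,\,T]\subset(0,T]$, and the countable family of piecewise-linear functions with rational breakpoints and rational values is $\varrho$-dense. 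Applying Theorem~3.1.8 of~\cite{Ethier:1986} to the convergence $y_{n'}\to y$ in distribution then produces a probability space carrying random elements $\{\widetilde{y}_{n'}\}_{n'}$ and $\widetilde{y}$ with values in $D([0,1],C(0,T])$ such that $\mathrm{Law}(\widetilde{y}_{n'})=\mathrm{Law}(y_{n'})$ for every $n'$ (this is assertion 1), $\mathrm{Law}(\widetilde{y})=\mathrm{Law}(y)$, and $\widetilde{y}_{n'}\to\widetilde{y}$ in $D([0,1],C(0,T])$ almost surely (this is assertion 2). In particular $\widetilde y$ indeed takes values in $D([0,1],C(0,T])$, since $y$ does.

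There is essentially no hard analytic step; the whole argument is a concatenation of two standard facts (Prokhorov and Skorokhod) applied to a convergence already established. The only point demanding a small verification is the separability of $D([0,1],C(0,T])$, i.e. of $C(0,T]$, which is the mild obstacle indicated above and is disposed of by exhibiting the explicit metric $\varrho$ together with an explicit countable dense set.
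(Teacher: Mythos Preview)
Your proposal is correct and follows essentially the same approach as the paper: the paper also first extracts a weakly convergent subsequence via Prokhorov's theorem (using separability of $D([0,1],C(0,T])$) and then applies Skorokhod's representation theorem (Theorem~3.1.8 in~\cite{Ethier:1986}) to obtain the almost sure convergence. Your write-up simply supplies a little more detail on the separability of $C(0,T]$ than the paper does.
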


\begin{rmk}
Since for every $n'$, $\{\widetilde{y}_{n'}(u,t),\ u\in[0,1],\ t\in(0,T]\}$ satisfies $1)$ of Lemma~\ref{lemma_converg_a_s}, we may suppose that $\widetilde{y}_{n'}$ is a random element in $D([0,1],C[0,T])$ considering $\widetilde{y}_{n'}(\cdot,0)=y_{n'}(\cdot,0)$.
\end{rmk}

\begin{rmk}
By Lemma~\ref{lemma_convergence_in_D}, property $2)$ is equivalent to
\begin{enumerate}
\item[2')] for all $\eps>0$, $\widetilde{y}_{n'}(\cdot,\eps\vee\cdot)\to\widetilde{y}(\cdot,\eps\vee\cdot)$ in $D([0,1],C[0,T])$ a.s.
\end{enumerate}
\end{rmk}

\begin{rmk}
Note that the index $n$ in $y_n$ or in $x_{\cdot}^n$ means that we have a system of particles which start from the set of the points $\frac{k}{n},\ k\in[n]$, with the mass $\frac{1}{n}$. Since we are not going to use this fact any more, for convenience we will suppose that $\widetilde{y}_{n'}=y_n$ and $\widetilde{y}=y$, namely we will suppose that for each $\eps>0$
$$
y_n(\cdot,\eps\vee\cdot)\to y(\cdot,\eps\vee\cdot)\ \ \mbox{in}\ D([0,1],C[0,T])\ \ \mbox{a.s.}
$$
\end{rmk}

We are going to extend $\{y(u,t),\ u\in[0,1],\ t\in(0,T]\}$ to $D([0,1],C[0,T])$. We will be able to do it due to the following lemmas.

\begin{lem}\label{lemma_prop_y_in_0}
Let $\varphi\in C_b^2(\R)$ and $y(u,0)=u$, $u\in[0,1]$. Then the random process $\xi(t)=\int_0^1\varphi(y(u,t))du,\ t\in[0,T]$,
is continuous a.s.
\end{lem}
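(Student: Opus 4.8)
The plan is to show continuity of $\xi$ on $(0,T]$ directly from the a.s. convergence $y_n(\cdot,\eps\vee\cdot)\to y(\cdot,\eps\vee\cdot)$ in $D([0,1],C[0,T])$, and then to establish continuity at $t=0$ separately, using the hypothesis $y(u,0)=u$ together with the tightness estimate from Proposition~\ref{prop_tight_xi}. For $t$ bounded away from $0$: on any interval $[\eps,T]$ the map $z\mapsto\int_0^1\varphi(z(u,\cdot))du$ is continuous from $D([0,1],C[\eps,T])$ into $C[\eps,T]$ when $\varphi$ is bounded and continuous, because a convergent sequence in $D([0,1],\cdot)$ converges at all but countably many $u$ and is uniformly bounded (the limit $y(u,\cdot)$ and the approximants $y_n(u,\cdot)$ take values in a bounded set of positions for $u$ in a compact subinterval, by Lemma~\ref{lemma_expectation}), so dominated convergence gives convergence of the integral uniformly in $t\in[\eps,T]$. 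Hence $\xi$ restricted to $[\eps,T]$ is a uniform limit of the continuous processes $\xi_n|_{[\eps,T]}$, so is continuous on $[\eps,T]$; letting $\eps\to0$ shows $\xi$ is continuous on $(0,T]$ a.s.

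The remaining and main point is continuity at $t=0$, i.e. that $\xi(t)\to\xi(0)=\int_0^1\varphi(u)du$ a.s. as $t\searrow0$. The idea is to combine two facts. First, Proposition~\ref{prop_tight_xi} shows $\{\xi_n\}$ is tight in $C[0,T]$, and since $\xi_n\to\xi$ on $(0,T]$ a.s. while $\xi_n(0)=\int_0^1\varphi(y_n(u,0))du\to\int_0^1\varphi(u)du$ (the starting configurations converge to the uniform one), any weak limit of $\{\xi_n\}$ in $C[0,T]$ agrees with $\xi$ on $(0,T]$ and equals $\int_0^1\varphi(u)du$ at $0$; hence (the distribution of) $\xi$ extends continuously to $[0,T]$ with $\xi(0)=\int_0^1\varphi(u)du$. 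Concretely one uses that $\xi$, as a limit point of a tight sequence in $C[0,T]$, is a.s. an element of $C[0,T]$, and its restriction to $(0,T]$ is the process already constructed. Second, to pin the value at $0$ one observes that the Aldous-type oscillation bound $\E|\xi_n(\sigma_n+\delta_n)-\xi_n(\sigma_n)|\le C(\sqrt{\delta_n}+\sqrt{\delta_n})\to0$, obtained in the proof of Proposition~\ref{prop_tight_xi}, passes to the limit to control the modulus of continuity of $\xi$ near $0$, while $\xi_n(0)\to\int_0^1\varphi(u)du$; together these force $\limsup_{t\to0}|\xi(t)-\int_0^1\varphi(u)du|=0$ a.s.

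I expect the main obstacle to be the interchange of limits at $t=0$: the convergence $y_n\to y$ is only known on $(0,T]$ (not uniformly down to $0$, since diffusion rates blow up near $0$), so one cannot directly read off $\xi(0)$ from $\xi_n(0)$ through the a.s. convergence. The way around this is exactly the quantitative oscillation estimate of Proposition~\ref{prop_tight_xi}, which is uniform in $n$ and hence survives the weak limit; one should state it as: for every $\eta>0$ there is $\delta>0$ with $\sup_n\p\{\sup_{0\le t\le\delta}|\xi_n(t)-\xi_n(0)|\ge\eta\}\le\eta$, deduce the same bound for $\xi$ (using that $\xi_n\to\xi$ uniformly on $[\delta',T]$ for each $\delta'>0$ and a diagonal/Portmanteau argument), and then combine with $\xi_n(0)\to\int_0^1\varphi(u)du$ to get continuity of $\xi$ at $0$ in probability; upgrading to a.s. continuity follows since $\xi$ already has continuous paths on $(0,T]$ and the one-sided limit at $0$ exists a.s. by monotone control of the oscillation. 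Routine measurability and the uniform-integrability bookkeeping needed to pass $\E|\cdot|$ estimates to the limit I will suppress.
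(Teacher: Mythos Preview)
Your approach is correct and essentially the same as the paper's: show $\xi_n\to\xi$ in $C(0,T]$ a.s.\ via continuity of the map $g\mapsto\int_0^1\varphi(g(u,\cdot))\,du$ from $D([0,1],C(0,T])$ to $C(0,T]$, then use tightness of $\{\xi_n\}$ in $C[0,T]$ (Proposition~\ref{prop_tight_xi}) to conclude $\xi\in C[0,T]$ a.s. The paper compresses this into four lines, leaving implicit the ``tightness plus a.s.\ convergence on $(0,T]$ implies continuity on $[0,T]$'' step that you spell out; note that your appeal to Lemma~\ref{lemma_expectation} for boundedness is unnecessary, since $\varphi$ is bounded and that alone suffices for the dominated-convergence argument.
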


\begin{proof}
Let us consider the map $F_{\varphi}:D([0,1],C(0,T])\to C(0,T]$
$$
F_{\varphi}(g)(t)=\int_0^1\varphi(g(u,t))du,\quad t\in(0,T],\ \ g\in D([0,1],C(0,T]).
$$
It is easy to see that $F_{\varphi}$ is continuous. Let $\xi_n(t)$, $t\in[0,T]$, be defined by~\eqref{f_xi}. Thus,
$$
\xi_n=F_{\varphi}(y_n)\to\xi\ \ \mbox{in}\ C(0,T]\ \ \mbox{a.s.}
$$
On the other hand, by Proposition~\ref{prop_tight_xi}, $\{\xi_n\}_{n\geq 1}$ is tight in $C[0,T]$. It implies that $\xi$ is continuous on $[0,T]$ a.s.
\end{proof}

{\col Let $C_K(\R)$ denote the class of continuous functions on $\R$ with compact support and $\h$ be a dense subset of $C_K(\R)$.

\begin{lem}\label{lemma_expectation_of_z}
  Let $\{z(u,t),\ u\in[0,1],\ t\in(0,T]\}\in D([0,1],C(0,T])$, $z(u,t)\leq z(v,t)$, $u<v$, $t\in(0,T]$, and for all $\varphi\in\h$
  \begin{equation}\label{f_converg_of_int}
  \int_0^1\varphi(z(u,t))du\to\int_0^1\varphi(u)du,\quad t\to 0.
  \end{equation}
  Then for each $u\in(0,1)$, $\lim_{t\to 0}z(u,t)=u$. Moreover, if $\lim_{t\to 0}z(0,t)=0$ and $\lim_{t\to 0}z(1,t)=1$, then the extension of $z$ on $[0,T]$
  $$
  z(u,t)=\begin{cases}
           z(u,t),& t\in(0,T],\\
           u& t=0,
         \end{cases}\quad u\in[0,1],
  $$
  belongs to $D([0,1],C[0,T])$.
\end{lem}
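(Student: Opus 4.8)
The plan is to deduce the pointwise convergence $\lim_{t\to 0}z(u,t)=u$ for interior $u$ from the convergence of integrals \eqref{f_converg_of_int} together with the monotonicity of $z(\cdot,t)$, and then to upgrade this to convergence in the Skorohod space $D([0,1],C[0,T])$ using a standard criterion for convergence of monotone functions. First I would observe that for fixed $u\in(0,1)$ the family $\{z(u,t)\}_{t\in(0,T]}$ is bounded: by monotonicity, $z(u,t)$ lies between $z(u',t)$ and $z(u'',t)$ for any $0<u'<u<u''<1$, and a simple argument using \eqref{f_converg_of_int} (applied to suitable $\varphi\in\h$ that separate a neighbourhood of $u'$ from one of $u''$) shows these cannot escape to $\pm\infty$ as $t\to 0$. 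Hence along any sequence $t_k\to 0$ we may extract a further subsequence along which $z(u,t)$ converges for every rational $u\in(0,1)$ simultaneously (diagonal argument), to a nondecreasing limit $\ell(u)$.

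Next I would identify $\ell$. Fix $\varphi\in\h$; then $\varphi(z(u,t_k))\to\varphi(\ell(u))$ for a.e.\ $u$ (all continuity points of $\ell$, which is all but countably many $u$), and since $\varphi$ is bounded, dominated convergence gives $\int_0^1\varphi(z(u,t_k))du\to\int_0^1\varphi(\ell(u))du$. Comparing with \eqref{f_converg_of_int} yields $\int_0^1\varphi(\ell(u))du=\int_0^1\varphi(u)du$ for all $\varphi\in\h$, hence for all $\varphi\in C_K(\R)$ by density, hence the pushforward of Lebesgue measure on $[0,1]$ under $\ell$ equals that under the identity; since $\ell$ is nondecreasing, this forces $\ell(u)=u$ at every continuity point, and then (using monotonicity and that the identity is continuous) at every $u\in(0,1)$. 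Since the subsequential limit is always the identity independent of the choice of subsequence, the full limit $\lim_{t\to 0}z(u,t)=u$ holds for each $u\in(0,1)$.

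For the second assertion, I would set $z(u,0):=u$ and show the extended map belongs to $D([0,1],C[0,T])$, i.e.\ that $t\mapsto z(\cdot,t)$ is right-continuous with left limits as a $C[0,T]$-valued... more precisely that $z$ as an element of $D([0,1],C[0,T])$ is genuinely càdlàg in $u$ uniformly on $[0,T]$. The only new issue compared with membership in $D([0,1],C(0,T])$ is the behaviour near $t=0$: I must check that for each $u\in[0,1]$ the trajectory $t\mapsto z(u,t)$ is continuous at $0$ (which is exactly $\lim_{t\to 0}z(u,t)=u$, established above for interior $u$ and assumed by hypothesis for $u=0$ and $u=1$), and that no ``Skorohod jump'' in the $u$-variable develops as $t\to 0$. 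Here I would use the characterization that $g\in D([0,1],C[0,T])$ iff $g\in D([0,1],C(0,T])$ and additionally $\sup_{u}\|g(u,\eps\vee\cdot)-g(u,\cdot)\|_{C[0,T]}\to 0$ as $\eps\to 0$ is compatible with the topology; concretely, because each fibre $z(u,\cdot)$ is continuous on $[0,T]$, one shows that the modulus controlling the Skorohod topology on $D([0,1],C[0,T])$ tends to $0$ using the monotonicity in $u$: a monotone family with continuous fibres and the correct values at $t=0$ and at $u=0,1$ cannot oscillate, so one can reuse the Skorohod modulus estimates that already hold on each $[\eps,T]$ and let $\eps\to0$, the boundary fibres $z(0,\cdot)$ and $z(1,\cdot)$ providing the needed uniform control.

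The main obstacle is the last step: passing from convergence of integrals plus monotonicity to convergence in the Skorohod $J_1$ topology on $D([0,1],C[0,T])$, because monotone convergence of a family of functions does not in general imply $J_1$-convergence — one needs that the jumps of the limit are approximated by jumps of the approximants and that no spurious oscillation appears near $t=0$. I expect this to be handled exactly as in the standard fact that monotone functions converging at a dense set converge in the Skorohod topology (cf.\ the argument for càdlàg monotone functions), combined with the continuity of every fibre $z(u,\cdot)$ on all of $[0,T]$, which rules out the pathological cases; the bookkeeping of the two-parameter Skorohod modulus is routine but must be done carefully.
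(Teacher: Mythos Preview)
Your argument for the first assertion is correct and essentially parallel to the paper's, though the paper proceeds slightly more directly via the distribution function $F_n(x)=\Leb\{u:z(u,t_n)\le x\}$ and its generalized inverse $z(u,t_n)=\sup\{x:F_n(x)\le u\}$, rather than your subsequence--identification route. Both amount to the same weak-convergence-of-pushforwards idea.

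The second assertion, however, has a genuine gap in your proposal, and you correctly flag it as ``the main obstacle.'' Your plan to control a two-parameter Skorohod modulus and let $\eps\to 0$ is far more complicated than necessary, and you do not actually carry it out. The paper's argument is a one-line application of Dini's theorem, which you are missing entirely. Here is the point: once you know each fibre $z(u,\cdot)$ is continuous on all of $[0,T]$ (this is the first part for $u\in(0,1)$ and the hypothesis for $u\in\{0,1\}$), take any $v_n\downarrow u$. By monotonicity in the first variable, $z(v_n,t)\downarrow z(u,t)$ for every $t\in[0,T]$ (pointwise convergence on $(0,T]$ comes from the assumed membership in $D([0,1],C(0,T])$, and at $t=0$ it is just $v_n\to u$). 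You now have a monotone sequence of continuous functions on the compact interval $[0,T]$ converging pointwise to a continuous limit; Dini's theorem upgrades this to uniform convergence, i.e.\ $z(v_n,\cdot)\to z(u,\cdot)$ in $C[0,T]$. This is precisely right-continuity of $u\mapsto z(u,\cdot)$ with values in $C[0,T]$. The same argument with increasing sequences gives existence of left limits. No modulus estimates or ``routine bookkeeping'' are required.
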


\begin{proof}
  Note that the density of $\h$ implies that~\eqref{f_converg_of_int} holds for all $\varphi\in C_K(\R)$. Let $\{t_n\}_{n\geq 1}\subset(0,T]$ be a sequence that converges to zero. Next we will consider $z(\cdot,t_n)$, $n\in\N$, as random elements in the probability space $([0,1],\B(\R),\Leb)$. For each $n\in\N$ we denote the distribution of $z(\cdot,t_n)$ in $\R$ by $\mu_n$, i.e. $\mu_n=\Leb\circ z(\cdot,t_n)^{-1}$. Then $\mu_n$ converges vaguely to $\mu$, where $\mu$ coincides with the Lebesgue measure on $[0,1]$ and $\mu([0,1]^c)=0$. Since $\Leb(\R)=1$, $\mu_n$ converges to $\Leb$ weakly, by Lemma~5.10~\cite{Kallenberg:2002}. Set
  $$
  F_n(x)=\Leb\{u:\ z(u,t_n)\leq x\},\quad x\in\R.
  $$
  Then using Theorem~2.1~\cite{Billingsley:1999}, we get
  \begin{equation}\label{f_conv_F}
  F_n(x)=\mu_n((-\infty,x])\to \mu((-\infty,x])=x\ \ \mbox{as}\ n\to\infty,
  \end{equation}
  for all $x\in[0,1]$.
  By the monotonicity and the right continuity of $z(\cdot,t_n)$, for all fixed $n$,
  $$
  z(u,t_n)=\sup\{x:\ F_n(x)\leq u\},\quad u\in(0,1).
  $$
  Next, let $u\in(0,1)$ and $\eps>0$ be fixed and $u+\eps,u-\eps\in(0,1)$. By~\eqref{f_conv_F}, there exists $N$ such that for all $n\geq N$,
  $F_n(u-\eps)\leq u< F_n(u+\eps)$. Hence
  $$
  u-\eps\leq z(u,t_n)\leq u+\eps,\quad \mbox{if }\ n\geq N.
  $$
  Since $\eps>0$ was arbitrary small, we have that $z(\cdot,t_n)\to u$, for all $u\in(0,1)$. Hence, for all $u\in(0,1)$,
  $$
  \lim_{t\to 0}z(u,t)=u,\quad u\in(0,1).
  $$
  This proves the first part of the lemma.

  Let $\lim_{t\to 0}z(0,t)=0$ and $\lim_{t\to 0}z(1,t)=1$. Then for each sequence $\{v_n\}_{n\geq 1}$ decreasing to $u$ and for all $t\in[0,T]$, $z(v_n,t)\downarrow z(u,t)$. By Dini's theorem $z(v_n,\cdot)\to z(u,\cdot)$ in $C[0,T]$. So, $z(u,\cdot)$, $u\in[0,1]$, is right continuous. Using Dini's theorem for an increasing sequence $\{v_n\}_{n\geq 1}$ again, we obtain that $z(u,\cdot)$, $u\in[0,1]$, has left limits.
\end{proof}
}
Let us prove the main result of this section.

\begin{prp}\label{prop_prolong}
Set
$$
y(u,0)=u,\ \ u\in[0,1].
$$
Then $\{y(u,t),\ u\in[0,1],\ t\in[0,T]\}$ is a random element in $D([0,1],C[0,T])$. Furthermore, for all $u\in[0,1]$ the process $y(u,\cdot)$ is a continuous $(\F_t)$-martingale, where
$$
\F_t=\sigma(y(u,s),\ u\in[0,1],\ s\leq t).
$$
\end{prp}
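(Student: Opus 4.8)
The plan is to verify the three hypotheses of Lemma~\ref{lemma_expectation_of_z} for $z=y$, which gives at once that the extension is a random element of $D([0,1],C[0,T])$, and then to establish the martingale property.

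\emph{Monotonicity and interior limits.} Each $y_n(\cdot,t)$ is nondecreasing by $(F3)$, and ``nondecreasing in $u$ for every $t$'' is preserved under Skorokhod convergence in $D([0,1],C(0,T])$: if $\lambda_n\to\mathrm{id}$ are the associated time changes of $[0,1]$, then $u<v$ gives $y_n(\lambda_n(u),t)\le y_n(\lambda_n(v),t)$, and letting $n\to\infty$ yields $y(u,t)\le y(v,t)$ for $t\in(0,T]$. For the convergence of the occupation integrals, fix $\varphi\in C_b^2(\R)$; by Lemma~\ref{lemma_prop_y_in_0} the process $\xi(t)=\int_0^1\varphi(y(u,t))du$, $t\in[0,T]$ (with $\xi(0)=\int_0^1\varphi(u)du$ since $y(u,0)=u$), is a.s.\ continuous, so a.s.\ $\int_0^1\varphi(y(u,t))du\to\int_0^1\varphi(u)du$ as $t\to0$. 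Taking $\h$ to be a countable dense subset of $C_K(\R)$ consisting of smooth functions and intersecting the corresponding almost sure events, \eqref{f_converg_of_int} holds a.s.\ for all $\varphi\in\h$; Lemma~\ref{lemma_expectation_of_z} then gives $\lim_{t\to0}y(u,t)=u$ for every $u\in(0,1)$, a.s.

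\emph{Martingale property on $(0,T]$ for interior $u$.} Since $y_n\to y$ in $D([0,1],C(0,T])$ a.s., a Fubini argument (the discontinuity set of $u'\mapsto y(u',\cdot)$ being a.s.\ Lebesgue--null) shows that for a.e.\ $u\in[0,1]$, and also for $u\in\{0,1\}$, one has $y_n(u,\cdot)\to y(u,\cdot)$ in $C(0,T]$ a.s. If moreover $u$ is bounded away from $0$ and $1$, Lemma~\ref{lemma_expectation}$(ii)$ gives $\sup_n\E\max_t y_n(u,t)^2<\infty$, so $\{y_n(u,t)\}_n$ is uniformly integrable for each $t$; passing to the limit in $\E[(y_n(u,t)-y_n(u,s))h]=0$ — where $h$ is a bounded continuous function of finitely many $y_n(u_i,s_i)$ with the $u_i$ admissible and $0<s_i\le s<t$, which is valid because $y_n(u,\cdot)$ is an $\F_t^n$-martingale — and then invoking a monotone-class argument (using right continuity of $u'\mapsto y(u',s)$ and continuity of $r\mapsto y(u',r)$ to identify the conditioning $\sigma$-field with $\F_s$), we get that $y(u,\cdot)$ is an $(\F_t)$-martingale on $(0,T]$. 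The $L^2$-bound also makes $\{y(u,s):s\in(0,T]\}$ uniformly integrable, so $\E y(u,t)$ is constant, equal to $\lim_{s\to0}\E y(u,s)=u$ by the first step. A general $u\in(0,1)$ is then squeezed, $y(u^-,\cdot)\le y(u,\cdot)\le y(u^+,\cdot)$, between admissible points $u^\pm\to u$, and passing to the limit with the $L^1$-domination $|y(u,t)|\le|y(u^-,t)|+|y(u^+,t)|$ shows $y(u,\cdot)$ is an $(\F_t)$-martingale with $\E y(u,t)=u$ for every $u\in(0,1)$.

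\emph{The endpoints, and membership in $D([0,1],C[0,T])$.} Monotonicity together with the interior results gives $\limsup_{t\to0}y(0,t)\le0$, $\liminf_{t\to0}y(1,t)\ge1$, as well as $\E y(1,t)\ge\sup_{u<1}u=1$ and $\E y(0,t)\le\inf_{u>0}u=0$; moreover, since $x_n^n(t)-x_1^n(t)\ge0$ and $x_n^n(t)-x_1^n(t)\to y(1,t)-y(0,t)$ a.s., Fatou's lemma and $(F1)$--$(F2)$ give $\E(y(1,t)-y(0,t))\le\liminf_n\E(x_n^n(t)-x_1^n(t))=1$, whence $\E y(1,t)=1$ and $\E y(0,t)=0$. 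Now $\bar y(1,\cdot):=\lim_{v\uparrow1}y(v,\cdot)$, a monotone limit of the interior $(\F_t)$-martingales, is itself an $(\F_t)$-martingale (monotone convergence, with the domination above), $\E\bar y(1,t)=\lim_{v\uparrow1}v=1=\E y(1,t)$, and $\bar y(1,t)\le y(1,t)$, so $y(1,\cdot)=\bar y(1,\cdot)$ a.s.\ is an $(\F_t)$-martingale on $(0,T]$; likewise $y(0,\cdot)$. By Lemma~\ref{lemma_expectation}$(i)$ and Fatou $\sup_t\E|y(1,t)|<\infty$, and an $L^1$-bounded martingale on $(0,T]$ converges a.s.\ and in $L^1$ as $t\downarrow0$ (the reverse martingale $(y(1,1/k))_k$); its limit $Y_1$ satisfies $\E Y_1=\lim_{t\to0}\E y(1,t)=1$ and $Y_1\ge\liminf_{t\to0}y(1,t)\ge1$, so $Y_1=1$ a.s., and symmetrically $\lim_{t\to0}y(0,t)=0$ a.s. All three hypotheses of Lemma~\ref{lemma_expectation_of_z} now hold, so $\{y(u,t),\ u\in[0,1],\ t\in[0,T]\}$ is a random element of $D([0,1],C[0,T])$.

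\emph{Conclusion.} For every $u\in[0,1]$, $y(u,\cdot)$ is an $(\F_t)$-martingale on $(0,T]$ with $\E y(u,t)=u$; since $\F_0$ is trivial, $\E[y(u,t)\mid\F_0]=\E y(u,t)=u=y(u,0)$, so the martingale property extends to $[0,T]$. Continuity of $y(u,\cdot)$ on $[0,T]$ is then immediate: it is continuous on $(0,T]$ as a coordinate of an element of $D([0,1],C(0,T])$, and $y(u,0^+)=u=y(u,0)$ by the preceding steps. I expect the main obstacle to be precisely the endpoint analysis: there the uniform $L^2$ control of Lemma~\ref{lemma_expectation}$(ii)$ is unavailable, and one must instead realise $y(0,\cdot),y(1,\cdot)$ as monotone limits of the interior processes, match their expectations through Fatou, and use $L^1$-martingale convergence — alongside the somewhat delicate bookkeeping required to identify the filtration $(\F_t)$ when passing to the limit from the approximating systems.
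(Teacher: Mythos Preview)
Your proof is correct and reaches the same conclusion, but by a genuinely different route than the paper's. The key divergence is that the paper first proves Lemma~\ref{lemma_cont}---that for every fixed $u$, $\p\{y(u,\eps\vee\cdot)\neq y(u-,\eps\vee\cdot)\}=0$---which via Corollary~\ref{coroll_mart_prop} yields $y_n(u,\eps\vee\cdot)\to y(u,\eps\vee\cdot)$ a.s.\ for \emph{every} $u\in[0,1]$, not merely a.e.\ $u$. With convergence for all $u$ in hand, the paper invokes a standard local-martingale-limit result (Jacod--Shiryaev) to get that $y(u,\cdot\vee\eps)$ is an $(\F_t^\eps)$-local martingale, upgrades to martingale via Lemma~\ref{lemma_expectation}, and then passes from the restricted filtration $\F_t^\eps$ to $\F_t$ by L\'evy's upward theorem; for the endpoints it again uses Lemma~\ref{lemma_cont} together with backward martingale convergence to identify $\E(y(0,T)\mid\F_{0+})$ and $\E(y(1,T)\mid\F_{0+})$. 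You bypass Lemma~\ref{lemma_cont} entirely: Fubini gives a.e.-$u$ convergence, the martingale property for those $u$ is passed to the limit directly against test functionals with a monotone-class argument to identify $\F_s$, general $u$ is recovered by a monotone squeeze, and the endpoints are handled by realising $y(1,\cdot)$ as $y(1-,\cdot)$ via expectation matching through Fatou. Your approach is more self-contained but demands more bookkeeping (the filtration identification and the separate endpoint analysis); the paper's route is cleaner once Lemma~\ref{lemma_cont} is available, and that lemma is in any case reused elsewhere in the argument.
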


To prove the proposition, we first show that $y(u,\cdot\vee\eps)$ is a martingale, for all $u\in[0,1]$. Then using Lemmas~\ref{lemma_prop_y_in_0} and~\ref{lemma_expectation_of_z} we extend $y$ to $D([0,1],C[0,T])$. To prove that $y(u,\cdot\vee\eps)$ is a martingale we are going to use the fact that $\{y_n(u,\cdot\vee\eps)\}_{n\geq 1}$ is also a martingale. It should be noted that in general, property $2')$ does not imply the convergence of $\{y_n(u,\cdot\vee\eps)\}_{n\geq 1}$ to $y(u,\cdot\vee\eps)$. So, we need {\col the following result, which by a standard property of the Skorohod topology then does give this convergence (see Corollary~\ref{coroll_mart_prop} below).}

\begin{lem}\label{lemma_cont}
For all $\eps>0$ and $u\in[0,1]$ one has
$$
\p\{y(u,\eps\vee\cdot)\neq y(u-,\eps\vee\cdot)\}=0.
$$
\end{lem}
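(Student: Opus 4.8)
The plan is to show that, almost surely, $u$ is not a jump point of the (left-continuous modification of the) random function $v\mapsto y(v,\eps\vee\cdot)\in C[0,T]$. The natural quantitative handle is Lemma~\ref{lemma_three_points}, which bounds the probability of a ``double jump'' straddling a fixed level $u$, together with the fact that the limiting process $y$ inherits such estimates from the approximations $y_n$. First I would fix $u\in(0,1)$ (the endpoints $u=0,1$ are handled separately, or are trivial since $y(0-,\cdot)$ is interpreted as $y(0,\cdot)$), fix $h>0$ small enough that $u\pm h\in[0,1]$, and look at the event
$$
E_{h,\lambda}=\{\|y(u+h,\cdot)-y(u,\cdot)\|\geq\lambda,\ \|y(u,\cdot)-y(u-h,\cdot)\|\geq\lambda\}.
$$
Passing Lemma~\ref{lemma_three_points} to the limit along the subsequence $\{n'\}$ — using the a.s.\ convergence $y_{n'}(\cdot,\eps\vee\cdot)\to y(\cdot,\eps\vee\cdot)$ in $D([0,1],C[0,T])$ from property $2')$, together with the portmanteau theorem applied to (a slight enlargement of) the closed set defining $E_{h,\lambda}$, and the monotonicity $(C3)$ which makes the relevant functionals well behaved at the continuity points $u\pm h$ of the limit — gives $\p(E_{h,\lambda})\leq 9h^2/\lambda^2$ for all but countably many $h$, hence for a sequence $h=h_m\downarrow 0$.

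Next I would argue that a jump of $y(\cdot,\eps\vee\cdot)$ at $u$ forces such a double-straddling event for all small $h$. Concretely, if $y(u-,\eps\vee\cdot)\neq y(u,\eps\vee\cdot)$, then $\|y(u,\eps\vee\cdot)-y(u-,\eps\vee\cdot)\|=:\lambda_0>0$; by monotonicity, for every $h$ with $u-h$ in a left neighbourhood of $u$ one has $y(u-h,t)\leq y(u-,t)\le y(u,t)$ for all $t$ (using that $u$ is approached from the left), so in fact $\|y(u,\eps\vee\cdot)-y(u-h,\eps\vee\cdot)\|\geq \|y(u,\eps\vee\cdot)-y(u-,\eps\vee\cdot)\|\ge\lambda_0/2$ once $h$ is small. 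Simultaneously, since $y(\cdot,\eps\vee\cdot)$ has a jump at $u$ and $D$-paths have at most countably many jumps, we may choose the sequence $h_m$ so that $u+h_m$ avoids jump points, and then $y(u+h_m,\eps\vee\cdot)\to y(u,\eps\vee\cdot)$ — wait, that goes the wrong way; instead I use that a left jump at $u$ combined with right-continuity at $u$ means $y(u+h,\cdot)$ stays close to $y(u,\cdot)$, so the relevant event is rather the one where the jump is ``seen'' only from the left. This forces a reformulation: I should instead apply the three-point lemma centered at a point $u'$ slightly to the right of $u$, i.e.\ bound $\p\{\|y(u'+h,\cdot)-y(u',\cdot)\|\geq\lambda,\ \|y(u',\cdot)-y(u'-h,\cdot)\|\geq\lambda\}$ and let $u'\downarrow u$; a left jump at $u$ then produces, for $u'$ close enough to $u$ and $h$ small, both $\|y(u',\cdot)-y(u'-h,\cdot)\|\geq\lambda_0/2$ and — because right-continuity at $u$ means values just right of $u$ cluster near $y(u,\cdot)$ while values just right of $u'$ for a genuine limit point must also — hmm. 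The clean way, which I would actually write, is: for the limiting process apply Lemma~\ref{lemma_three_points} as $\p(E_{h,\lambda})\leq 9h^2/\lambda^2$, then note that the set of $u\in[0,1]$ for which $\sup_{h\in(0,h_0]}\inf\{\lambda: E_{h,\lambda}\text{ fails}\}>0$ — i.e.\ the ``bad'' $u$ — has, by Fubini and the bound $\E\int_0^1\I_{E_{h_m,\lambda}}\,du\le 9h_m^2/\lambda^2$, Lebesgue measure zero a.s.; but a $D([0,1],\cdot)$ path can only jump at countably many points, and at a jump point the left and right limits differ, which is incompatible with being a Lebesgue-a.e.-continuity point in the strong sense just described. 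Making this last implication precise is the crux.

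The main obstacle is exactly this final implication: translating the three-point probability estimate (which naturally controls a.e.\ behaviour in $u$) into the statement that a {\emph fixed} $u$ is a.s.\ not a jump point. The resolution is that the statement to be proved is for fixed $u$, so I should not integrate in $u$ at all; rather, I apply Lemma~\ref{lemma_three_points} with this fixed $u$ and the sequence $h_m\downarrow 0$, obtaining $\sum_m \p(E_{h_m,\lambda})<\infty$ if the $h_m$ are chosen summably (e.g.\ $h_m=2^{-m}$, giving $\sum 9\cdot4^{-m}/\lambda^2<\infty$), so by Borel--Cantelli only finitely many $E_{h_m,\lambda}$ occur; since on $\{y(u-,\eps\vee\cdot)\neq y(u,\eps\vee\cdot)\}$ with jump size $\geq\lambda_0$ we showed $E_{h_m,\lambda_0/2}$ holds for {\emph all} small $m$ (along a subsequence of $h_m$'s hitting continuity points, which still has the summable spacing after relabelling), this forces $\p\{y(u-,\eps\vee\cdot)\neq y(u,\eps\vee\cdot)\}=0$ by letting $\lambda_0$ range over a countable dense set and taking a union. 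I would also need the elementary lemma that the limiting $y$ satisfies $\langle y(u,\cdot),y(v,\cdot)\rangle\equiv 0$ up to collision is \emph{not} needed here — only $(C3)$-monotonicity of $y_n$ and of $y$, plus the weak-convergence transfer of the Lemma~\ref{lemma_three_points} bound — so the write-up stays short.
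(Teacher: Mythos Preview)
Your approach has a genuine gap, and you in fact notice it yourself (``wait, that goes the wrong way'') but never repair it. The three-point estimate of Lemma~\ref{lemma_three_points} bounds the probability that \emph{both} increments $\|y(u+h,\cdot)-y(u,\cdot)\|$ and $\|y(u,\cdot)-y(u-h,\cdot)\|$ are large. But a jump of $y(\cdot,\eps\vee\cdot)$ at the fixed point $u$ does \emph{not} force both to be large: by right-continuity in the Skorohod path, $y(u+h,\eps\vee\cdot)\to y(u,\eps\vee\cdot)$ as $h\downarrow 0$, so the right increment is small while only the left increment $\|y(u,\cdot)-y(u-h,\cdot)\|$ is bounded below by the jump size. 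Hence the event $E_{h,\lambda}$ typically \emph{fails} on $\{y(u,\eps\vee\cdot)\neq y(u-,\eps\vee\cdot)\}$, and your final Borel--Cantelli claim that ``$E_{h_m,\lambda_0/2}$ holds for all small $m$'' is simply false. Recentring at some $u'$ near $u$ does not help: a single jump still produces only one large increment, not two consecutive ones, no matter where the center is placed. The three-point lemma is designed to control the Skorohod-type modulus of continuity, which permits isolated jumps; it cannot by itself exclude a jump at a deterministic point.

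The paper's proof uses a different, much more direct estimate: the \emph{two-point} moment bound behind Lemma~\ref{lemma_first_points}. For $u<v$, the process $y_n(v,\cdot)-y_n(u,\cdot)$ is a nonnegative martingale started at $y_n(v,0)-y_n(u,0)\leq v-u+\tfrac{2}{n}$, so stopping at the hitting time of $1$ and applying Doob's $L^\beta$ inequality gives $\E[\|y_n(v,\cdot)-y_n(u,\cdot)\|^\beta\wedge 1]\leq C_\beta(v-u)$. One then bounds, using monotonicity of $y$ and transfer of the estimate from $y_n$ to $y$ via the a.s.\ convergence,
\[
\p\{\|y(u,\eps\vee\cdot)-y(u-\delta,\eps\vee\cdot)\|>\gamma\}\leq \p\{\|y(u+\delta,\eps\vee\cdot)-y(u-\delta,\eps\vee\cdot)\|>\gamma\}\leq \frac{C_\beta\,4\delta}{\gamma^\beta},
\]
and letting $\delta\downarrow 0$ finishes the proof. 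The key point you are missing is that a \emph{single}-increment bound of order $\delta$ is available here (because the difference is a nonnegative martingale with small initial value), and that is exactly what is needed; the three-point lemma is the wrong tool for this particular statement.
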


\begin{proof}
Let $u\in[0,1]$ and $\eps>0$ be fixed. Since for all $n\geq 1$, $y_n(\cdot,\eps\vee\cdot)$ is non-decreasing in the first argument, $y(\cdot,\eps\vee\cdot)$ is non-decreasing too. So, for $\gamma>0$, $\delta>0$ and $\beta>1$ we have
\begin{align*}
\p\{\|y(u,\eps&\vee\cdot)-y(u-\delta,\eps\vee\cdot)\|>\gamma\}\\
&\leq \p\{\|y(u+\delta,\eps\vee\cdot)-y(u-\delta,\eps\vee\cdot)\|>\gamma\}\\ &\leq\p\left\{\bigcup_{n=1}^{\infty}\bigcap_{k=n}^{\infty}\{\|y_k(u+2\delta,\eps\vee\cdot)-y_k(u-2\delta,\eps\vee\cdot)\|>\gamma\}\right\}\\
&\leq\lim_{n\to\infty} \p\left\{\bigcap_{k=n}^{\infty}\{\|y_k(u+2\delta,\eps\vee\cdot)-y_k(u-2\delta,\eps\vee\cdot)\|>\gamma\}\right\}\\
&\leq\varlimsup_{n\to\infty}\p\{\|y_n(u+2\delta,\eps\vee\cdot)-y_n(u-2\delta,\eps\vee\cdot)\|>\gamma\}\\
&\leq\frac{1}{\gamma^{\beta}}\varlimsup_{n\to\infty}\E\left[\|y_n(u+2\delta,\cdot)-y_n(u-2\delta,\cdot)\|^{\beta}\wedge 1\right]\leq\frac{4\delta C_{\beta}}{\gamma^{\beta}}.
\end{align*}

Next, passing to the limit as $\delta$ tends to $0$ and using the monotonicity of $\{\|y(u,\eps\vee\cdot)-y(u-\delta,\eps\vee\cdot)\|>\gamma\}$ in $\delta$ we obtain
$$
\p\{\|y(u,\eps\vee\cdot)-y(u-,\eps\vee\cdot)\|>\gamma\}=0.
$$
This proves the lemma.
\end{proof}

\begin{cor}\label{coroll_mart_prop}
For all $\eps>0$ and $u\in[0,1]$
$$
y_n(u,\eps\vee\cdot)\to y(u,\eps\vee\cdot)\ \ \mbox{in}\ \ C[0,T]\ \ \mbox{a.s.}
$$
\end{cor}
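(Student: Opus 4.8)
The plan is to deduce this from the almost sure Skorohod convergence $y_n(\cdot,\eps\vee\cdot)\to y(\cdot,\eps\vee\cdot)$ in $D([0,1],C[0,T])$ (property $2'$ above) together with Lemma~\ref{lemma_cont}, using the standard fact that convergence in the Skorohod topology forces pointwise convergence at every continuity point of the limit.

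Concretely, I would fix $\eps>0$ and $u\in[0,1]$. On a set of full probability we have the Skorohod convergence of $v\mapsto y_n(v,\eps\vee\cdot)$ to $v\mapsto y(v,\eps\vee\cdot)$, viewed as right-continuous $C[0,T]$-valued paths in the variable $v$; and by Lemma~\ref{lemma_cont} we may also assume, on a further set of full probability, that $y(u,\eps\vee\cdot)=y(u-,\eps\vee\cdot)$, i.e. that this limiting path is continuous in $v$ at the point $v=u$. I would then invoke the elementary property of the Skorohod topology (see e.g. Proposition~3.6.5~\cite{Ethier:1986}): if $g_n\to g$ in $D([0,1],E)$ for a metric space $E$ and $g$ is continuous at $v_0$, then $g_n(v_0)\to g(v_0)$ in $E$. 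This follows at once by writing Skorohod convergence via increasing homeomorphisms $\lambda_n\to\mathrm{id}$ uniformly on $[0,1]$ with $g_n\circ\lambda_n\to g$ uniformly, and estimating $g_n(v_0)$ through $g(\lambda_n^{-1}(v_0))$, using $\lambda_n^{-1}(v_0)\to v_0$ and continuity of $g$ at $v_0$. Applying this with $E=C[0,T]$, $g_n=y_n(\cdot,\eps\vee\cdot)$, $g=y(\cdot,\eps\vee\cdot)$ and $v_0=u$ gives $y_n(u,\eps\vee\cdot)\to y(u,\eps\vee\cdot)$ in $C[0,T]$ almost surely. (For $u\in\{0,1\}$ this is automatic, since the time changes fix the endpoints, so Lemma~\ref{lemma_cont} is genuinely needed only for $u\in(0,1)$.)

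I do not expect any real obstacle here: the substantive input is Lemma~\ref{lemma_cont}, which excludes the limiting flow having a jump precisely at the chosen label $u$, and the remaining argument is the routine translation of Skorohod convergence into pointwise convergence at a continuity point. The only point deserving a word of care is that Lemma~\ref{lemma_cont} carries an exceptional null set depending on $u$; but this is exactly compatible with the pointwise-in-$u$ formulation of the corollary, so it causes no difficulty.
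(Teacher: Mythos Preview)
Your proposal is correct and is exactly the argument the paper has in mind: the text preceding Lemma~\ref{lemma_cont} states explicitly that the corollary follows ``by a standard property of the Skorohod topology'' once one knows the limit has no jump at the given $u$, and the paper supplies no further proof. Your invocation of Proposition~3.6.5~\cite{Ethier:1986} and the remark about the $u$-dependent null set are precisely the right ingredients.
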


\begin{proof}[Proof of Proposition~\ref{prop_prolong}]
Set
$$
\F_t^{\eps}=\sigma(y(u,s),\ u\in[0,1],\ s\in[\eps,t\vee\eps]),\quad t\in[0,T].
$$
Reasoning as in the proof of Proposition~9.1.17~\cite{Jacod:2003}, we can prove, using Corollary~\ref{coroll_mart_prop}, that $y(u,t\vee\eps),\ t\in[0,T]$, is an $(\F_t^{\eps})$-local martingale, for all $u\in[0,1]$. {\col By Lemma~\ref{lemma_expectation}~$(i)$ and Fatou's lemma, there exists a constant $C$, independent on $u,\ t$ and $\eps$, such that
\begin{equation}\label{f_estim_exp_y}
\E|y(u,t)|\leq C,\quad u\in[0,1],\ \ t\in[0,T].
\end{equation}
Using the second part of the Lemma~\ref{lemma_expectation}, it is easy to see that $y(u,t\vee\eps),\ t\in[0,T]$, is an $(\F_t^{\eps})$-martingale for all $u\in(0,1)$. Since for each $t$ and $\eps$, $y(\cdot,t\vee\eps)$ is non-decreasing with respect to the first variable and continuous at $u=0$ and $u=1$, $y(u,t\vee\eps),\ t\in[0,T]$, is an $(\F_t^{\eps})$-martingale for $u=0$ and $u=1$, by the monotone convergence theorem and~\eqref{f_estim_exp_y}.}

Note that
$$
\F_t=\sigma\left(\bigcup_{0<\eps\leq t}\F_t^{\eps}\right),\quad t\in(0,T].
$$
Take $\eps\leq s\leq t$ and consider
$$
\E\left(\left. y(u,t\vee\eps)\right|\F_s^{\eps}\right)=y(u,s\vee\eps),
$$
that is equivalent to
$$
\E\left(\left. y(u,t)\right|\F_s^{\eps}\right)=y(u,s).
$$
{\col By Levy's theorem} (see e.g. Theorem~1.5~\cite{Liptser:2001})
$$
\E\left(\left. y(u,t)\right|\F_s^{\eps}\right)\to\E\left(\left. y(u,t)\right|\F_s\right)\quad\mbox{a.s.},\ \ \eps\to 0
$$
so
\begin{equation}\label{f_mart_prop}
\E\left(\left. y(u,t)\right|\F_s\right)=y(u,s)
\end{equation}
and it means that $y(u,t),\ t\in(0,T]$, is an $(\F_t)$-martingale.

Let $\h\subset C_b^2(\R)$ be a {\col countable dense subset of $C_K(\R)$}. By Lemma~\ref{lemma_prop_y_in_0}, there exists $\Omega'$ such that $\p\{\Omega'\}=1$ and for all $\omega\in\Omega'$ and {\col $\varphi\in\h$, $\int_0^1\varphi(y(u,t,\omega))du$, $t\in[0,T]$, is continuous.} Consequently, $\int_0^1\varphi(y(u,t,\omega))du$ converges to $\int_0^1\varphi(u)du$ as $t\to 0$. Hence Lemma~\ref{lemma_expectation_of_z} implies the continuity of $y(u,t,\omega)$, $t\in[0,T]$, for all $u\in(0,1)$ and $\omega\in\Omega'$.

Next, using Lemma~\ref{lemma_cont} and the martingale property of $y(u,\cdot)$, we will show that $\lim_{t\to 0}y(1,t)=1$ and $\lim_{t\to 0}y(0,t)=0$ a.s. Thus, we will be able to use the second part of Lemma~\ref{lemma_expectation_of_z} in order to state that $\{y(u,t),\ u\in[0,1],\ t\in[0,T]\}\in D([0,1],C[0,T])$ a.s.

Since $y(u,t),\ t\in(0,T]$, is an $(\F_t)$-martingale
$$
y(u,t)=\E\left(\left. y(u,T)\right|\F_t\right),\quad u\in[0,1].
$$
{\col By Levy's theorem},
$$
\E\left(\left. y(u,T)\right|\F_t\right)\to\E\left(\left. y(u,T)\right|\F_{0+}\right)\quad\mbox{a.s.},\ \ t\to 0.
$$
So, $\E\left(\left. y(u,T)\right|\F_{0+}\right)=u$ a.s. for all $u\in(0,1)$. Let us prove that $\E\left(\left. y(0,T)\right|\F_{0+}\right)=0$ and $\E\left(\left. y(1,T)\right|\F_{0+}\right)=1$.

Let $v_n\downarrow 0$ then by Lemma~\ref{lemma_cont} and the monotone convergence theorem for conditional expectations (see e.g. Theorem~1.1~\cite{Liptser:2001})
$$
0=\lim_{n\to\infty}v_n=\lim_{n\to\infty}\E(y(v_n,T)|\F_{0+})=\E(y(0,T)|\F_{0+}),
$$
where we understand $\lim$ as the limit almost surely. Similarly, $$\E\left(\left. y(1,T)\right|\F_{0+}\right)=1.$$ Thus by Lemma~\ref{lemma_expectation_of_z}, $\{y(u,t),\ u\in[0,1],\ t\in[0,T]\}$ belongs to $D([0,1],C[0,T])$ a.s. Moreover, since for each $u\in[0,1]$, $y(u,\cdot)$ is a random element of $C[0,T]$, $\{y(u,t),\ u\in[0,1],\ t\in[0,T]\}$ is a random element of $D([0,1],C[0,T])$, by Proposition~3.7.1~\cite{Ethier:1986}.

Note that the martingale property of $y(u,t),\ t\in[0,T]$, follows from~\eqref{f_mart_prop} and the equality $\E\left(\left. y(u,T)\right|\F_{0+}\right)=u$, $u\in[0,1]$. The proposition is proved.
\end{proof}

\subsection{Verification of conditions $(C4)$ and $(C5)$}\label{section_C4_C5}

In this section we will show that the process $\{y(u,t),\ u\in[0,1],\ t\in[0,T]\}$ satisfies conditions $(C4)$ and $(C5)$ of Theorem~\ref{theorem_main_result}. To check this, first we state some properties of a sequence of stopping times which are defined by a sequence of local martingales, then we will prove that the joint local quadratic variation of $y(u,\cdot)$ and $y(v,\cdot)$ is the limit of the joint local quadratic variation of $y_n(u,\cdot)$ and $y_n(v,\cdot)$.

It should be noted that $\{y(u,t),\ u\in[0,1],\ t\in[0,T]\}$ satisfies conditions $(C2)$ and $(C3)$ (see Proposition~\ref{prop_prolong}). Let us prove an auxiliary lemma.

\begin{lem}\label{lemm_conv}
Let $z_n(t),\ t\in[0,T],\ n\geq 1$, be {\col a set of continuous local martingales such that} {\cob for all $n\geq 1$ and $s,t\in[0,\tau_n]$, $s<t$,
\begin{equation}\label{rest_char}
\langle z_n(\cdot)\rangle_t-\langle z_n(\cdot)\rangle_s\geq p(t-s),
\end{equation}
}
where $\tau_n=\inf\{t:\ z_n(t)=0\}\wedge T$ and $p$ is a non-random positive constant. Let $z(t),\ t\in[0,T],$ be a continuous process such that
$$
z(\cdot\wedge\tau)=\lim_{n\to\infty}z_n(\cdot\wedge\tau_n)\ (\mbox{in}\ \ C([0,T],\R)) \ a.s.,
$$
where $\tau=\inf\{t:\ z(t)=0\}\wedge T$. Then
\begin{equation}\label{len_cen}
\tau=\lim_{n\to\infty}\tau_n\ \mbox{in probability}.
\end{equation}
\end{lem}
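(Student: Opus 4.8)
The plan is to show the two halves of the claim by a squeeze argument: on the one hand $\tau\le\liminf_n\tau_n$ along the a.s.\ convergent subsequence because the zero of the limit can only be reached after (or at) the zeros of the approximants have accumulated; on the other hand $\limsup_n\tau_n\le\tau$ because the uniform lower bound \eqref{rest_char} on the quadratic variation forces $z_n$ to actually cross zero once $z$ has, so the $z_n$ cannot stay strictly away from $0$ much longer than $z$ does. First I would fix $\omega$ in the almost sure event where $z_n(\cdot\wedge\tau_n)\to z(\cdot\wedge\tau)$ uniformly on $[0,T]$, and work pathwise; the convergence in probability in \eqref{len_cen} will then follow from the fact that a.s.\ convergence along every further subsequence would give a.s.\ convergence of $\tau_n$, hence convergence in probability (I would phrase this via the standard subsequence characterisation, so that it suffices to prove $\tau_n\to\tau$ a.s.\ along an arbitrary subsequence, relabelled $n$).

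For the easier inequality $\tau\le\liminf_n\tau_n$: on $\{t<\liminf_n\tau_n\}$ we have $t<\tau_n$ for all large $n$, so $z_n(t\wedge\tau_n)=z_n(t)$, and letting $n\to\infty$ gives $z(t\wedge\tau)=\lim_n z_n(t)$. If moreover $t<\tau$, fine; but if $\liminf_n\tau_n<\tau$ along a subsubsequence, pick $t$ with $\liminf_n\tau_n<t<\tau$; then $z$ does not vanish on $[0,t]$, in particular $|z(\tau_n')|$ is bounded below by $\min_{[0,t]}|z(\cdot\wedge\tau)|>0$ for the relevant $\tau_n'\le t$, contradicting $z_n(\tau_n')=0$ together with uniform convergence. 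I would write this out as: $0=z_n(\tau_n\wedge\tau_n)\to z(\liminf\tau_n\wedge\tau)$ does \emph{not} follow directly since $\tau_n$ moves, so instead extract a subsequence along which $\tau_n\to t_\ast:=\liminf\tau_n<\tau$, use uniform convergence to get $z(t_\ast)=\lim_n z_n(\tau_n)=0$ (using continuity of $z$ and closeness of $\tau_n$ to $t_\ast$ for large $n$), contradicting $t_\ast<\tau$.

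For the harder inequality $\limsup_n\tau_n\le\tau$: suppose, along a subsequence, $\tau_n\to t_\ast>\tau$. Fix $\delta>0$ with $\tau+\delta<t_\ast$ and $\tau+\delta\le T$. On $[\tau,\tau+\delta]$ the limit $z$ takes both signs near $\tau$ (or at least $z(\tau)=0$ with $z$ not identically $0$ thereafter — actually $z(\tau)=0$ is all we get directly), but the key point is different: for $n$ large, $\tau_n>\tau+\delta$, so $z_n$ does not vanish on $[0,\tau+\delta]$, hence by \eqref{rest_char} $\langle z_n\rangle_{\tau+\delta}-\langle z_n\rangle_{0}\ge p(\tau+\delta)$, and more usefully $\langle z_n\rangle$ grows at rate $\ge p$ on all of $[0,\tau+\delta]$. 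This means $z_n$ restricted to $[0,\tau+\delta]$, being a continuous local martingale with quadratic variation bounded below by a linear function, must fluctuate: a time-changed Brownian motion $z_n(t)=w_n(\langle z_n\rangle_t)$ (Dubins--Schwarz, as used in Lemma~\ref{lemma_enequality_for_stop_time}) runs a Brownian clock that advances by at least $p\delta$ on $[\tau,\tau+\delta]$, so starting from $z_n(\tau)\to z(\tau)=0$ it hits $0$ again before time $\tau+\delta$ with probability bounded below — more precisely, I would argue by contradiction on the probability: the event $\{\tau_n>\tau+\delta\}$ forces $\operatorname{sgn} z_n$ to be constant on $[\tau-\eta,\tau+\delta]$ for the stopping-time-perturbed $\tau$, while $z_n(\tau-\eta)\to z(\tau-\eta)$ which, since $\tau$ is the \emph{first} zero of $z$, is nonzero of a definite sign, yet $z_n$ at a slightly later time must be close to $z(\tau)=0$; combining this with the lower bound on $\langle z_n\rangle$ and the reflection-type estimate from Lemma~\ref{lemma_enequality_for_stop_time} (comparing the first hitting time of $0$ by $z_n$ to that of a Brownian motion with clock $pt$) yields $\p\{\tau_n>\tau+\delta\}\to 0$, contradicting $\tau_n\to t_\ast>\tau+\delta$ on a set of positive probability.

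The main obstacle will be this last step: turning the pathwise picture (``$z$ has hit $0$, its quadratic variation keeps growing, so $z_n$ must have hit $0$ too'') into a rigorous estimate, because $z$ itself is only assumed continuous — we are \emph{not} told $z$ is a martingale or that $z$ has nondegenerate quadratic variation — so all the dynamical information lives on the $z_n$ side and must be transferred through the uniform convergence on the stopped processes. The cleanest route is: (1) reduce to a.s.\ convergence along subsequences; (2) get $\liminf\tau_n\ge\tau$ by the soft argument above; (3) for $\limsup\tau_n\le\tau$, work on the event $B=\{\limsup\tau_n>\tau\}$, intersect with the a.s.\ convergence event, pick $\delta,\eta>0$ and a random $n$-subsequence with $\tau_n>\tau+\delta$; on this subsequence apply Dubins--Schwarz to $z_n$ on $[0,\tau+\delta]$ together with \eqref{rest_char} to see the Brownian clock gains at least $p\delta$, while $z_n(\tau)$ stays bounded away from $0$ by constancy of sign is contradicted because $z_n(\tau)\to z(\tau)=0$; quantify ``bounded away'' via a small Brownian non-hitting probability and Lemma~\ref{lemma_enequality_for_stop_time}. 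Apart from this, the argument is bookkeeping with uniform convergence and stopping times.
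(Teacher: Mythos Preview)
Your argument for $\liminf_n \tau_n \ge \tau$ is correct and is essentially the paper's: on the almost-sure convergence event, the compact range $\{z(s):s\in[0,(\tau-\eps)\vee 0]\}$ avoids $0$, so for large $n$ the path $z_n$ stays away from $0$ on $[0,\tau-\eps]$, giving $\tau_n\ge\tau-\eps$ pathwise.

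The second half has a genuine gap. You set it up as a pathwise argument---fix $\omega\in B=\{\limsup\tau_n>\tau\}$ and derive a contradiction---but the ``contradiction'' you reach is a probabilistic estimate: the Dubins--Schwarz Brownian motion, starting near $0$ and running for Brownian time $\ge p\delta$, hits $0$ with high probability. That is a statement about measure, not about individual paths; for each $n$ the non-hitting event has positive probability, so you cannot conclude $\p(B)=0$ this way. A related technical obstruction: to turn ``$|z_n(\tau)|$ small, clock gains $p\delta$, hence hit $0$'' into an honest bound you would need the strong Markov property of $w_n$ at the random time $\langle z_n\rangle_\tau$, but $\tau$ is defined through the \emph{limit} $z$, so $\langle z_n\rangle_\tau$ is not a stopping time for $w_n$. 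Lemma~\ref{lemma_enequality_for_stop_time} does not supply this; it only compares hitting times when the clock runs from a genuine initial point.

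The paper does not attempt almost-sure convergence for this direction. It bounds $\p\{\tau_n-\tau\ge\eps\}$ directly, and the key trick is to look near $\tau_n$ rather than near $\tau$. On $\{\tau_n\ge\tau+\eps\}$ and for $t\in[\tau_n-\eps,\tau_n]$ one has $t\ge\tau$ and $t\le\tau_n$, hence $|z_n(t)|=|z_n(t\wedge\tau_n)-z(t\wedge\tau)|$. So either $\sup_{[\tau_n-\eps,\tau_n]}|z_n|\ge\delta$, which forces the uniform distance to exceed $\delta$ and has vanishing probability, or $\sup_{[\tau_n-\eps,\tau_n]}|z_n|<\delta$. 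In Brownian time the latter says that $z_n(0)+w_n(\cdot)$ stays in $(-\delta,\delta)$ on an interval of length $\ge p\eps$ \emph{ending at} $\sigma_n:=\langle z_n\rangle_{\tau_n}$, the hitting time of $0$; since $\sigma_n$ \emph{is} a $w_n$-stopping time, this is controllable. The paper isolates this as a separate lemma (Lemma~\ref{lemm_sup}): $\sup_x\p\{\sigma_x\ge\tilde\eps,\ \sup_{[\sigma_x-\tilde\eps,\sigma_x]}|w-x|<\delta\}\to 0$ as $\delta\to 0$, proved by strong Markov at the entrance time into the $\delta$-strip. That uniform-in-$x$ estimate, not Lemma~\ref{lemma_enequality_for_stop_time}, is the missing ingredient in your outline.
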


{\cob
\begin{rmk}
If the function $\langle z_n(\cdot)\rangle_t$, $t\in[0,T]$, is absolutely continuous with $\frac{d}{dt}\langle z_n(\cdot)\rangle_t\geq p$, $t\in(0,T]$, then it satisfies~\eqref{rest_char}.
\end{rmk}
}

\begin{proof}
Set $A=\{\omega:\ z(\cdot\wedge\tau(\omega),\omega)=\lim_{n\to\infty}z_n(\cdot\wedge\tau_n(\omega),\omega)\}$. Take $\eps>0$, $\omega\in A$ and suppose that $\tau(\omega)>0$. Denote the subset $\{z(t,\omega):\ t\in[0,(\tau(\omega)-\eps)\vee 0]\}$ of $\R$ by $K_{\eps}(\omega)$.

Since $z$ is a continuous process, $K_{\eps}(\omega)$ is a compact set as image of a compact set and $0\not\in K_{\eps}(\omega)$. Hence there exists $\delta(\omega)>0$ such that $0\not\in K_{\eps}^{\delta}(\omega)$, where
$$
K_{\eps}^{\delta}(\omega)=\{a\in\R:\ \inf\{|a-b|,\ b\in K_{\eps}(\omega)\}<\delta\}.
$$
Since $z(\cdot\wedge\tau(\omega),\omega)=\lim_{n\to\infty}z_n(\cdot\wedge\tau_n(\omega),\omega)$, there exists $N(\omega)$ such that for every $n\geq N(\omega)$ $z_n(t,\omega)\in K_{\eps}^{\delta}(\omega)$, $t\in[0,(\tau(\omega)-\eps)\vee 0]$. It implies the inequality
$$
\tau_n(\omega)\geq\tau(\omega)-\eps.
$$
For the case $\tau(\omega)=0$ the latter inequality is obvious. So, we have
$$
\lim_{n\to\infty}\p\{\tau-\tau_n\geq\eps\}=0.
$$

To prove the equality $\lim_{n\to\infty}\p\{\tau_n-\tau\geq\eps\}=0$ we need the following lemma.

\begin{lem}\label{lemm_sup}
Let $w(t),\ t\geq 0$, be a Wiener process and $\sigma_x=\inf\{t:\ w(t)=x\}$. Then for every $\eps>0$
$$
\sup\limits_{x\in\R}\p\left\{\sigma_x\geq\eps,\ \sup\limits_{t\in[\sigma_x-\eps,\sigma_x]}|w(t)-x|<\delta\right\}\to 0\ \ \mbox{as}\ \delta\to 0.
$$
\end{lem}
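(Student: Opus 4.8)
The plan is to use the strong Markov property at the hitting time $\sigma_x$ together with a scaling argument to reduce everything to a statement that does not depend on $x$. First I would observe that by the strong Markov property the process $v(s):=w(\sigma_x+s)-x$, $s\geq 0$, is a standard Wiener process independent of $\F_{\sigma_x}$, and the time-reversed path $u(s):=w(\sigma_x-s)-x$, $s\in[0,\sigma_x]$, is a Brownian motion run backwards from $0$ and stopped (in the reversed clock) when it first returns to $-x$; in particular, on the event $\{\sigma_x\geq\eps\}$, the quantity $\sup_{t\in[\sigma_x-\eps,\sigma_x]}|w(t)-x|$ equals $\sup_{s\in[0,\eps]}|u(s)|$. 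The key point is that the conditional law of $(u(s))_{s\in[0,\eps]}$ given $\{\sigma_x\geq\eps\}$ is that of a Brownian bridge-type object starting at $0$, and crucially it is \emph{bounded below} in the convex order (or stochastically) by the unconditioned Brownian motion near $0$: conditioning on $\sigma_x\geq\eps$ only forbids the reversed path from hitting $-x$ in $[0,\eps]$, which for small $\delta$ is a negligible constraint once we are on the event $\{\sup_{s\in[0,\eps]}|u(s)|<\delta\}$ with $\delta$ small relative to $|x|$; and for $|x|$ small the unconditioned estimate already does the job.

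More concretely, I would split according to whether $|x|\geq\sqrt\delta$ or $|x|<\sqrt\delta$. In the first regime, on the event $\{\sup_{t\in[\sigma_x-\eps,\sigma_x]}|w(t)-x|<\delta\}$ the path $w$ stays in the interval $(x-\delta,x+\delta)$ throughout $[\sigma_x-\eps,\sigma_x]$, which does not contain $0$; hence on this event $\sigma_x$ is also the first hitting time of $x$ \emph{after} the last visit to, say, $x/2$, and by the strong Markov property applied at that last visit (or, cleaner, by a first-passage decomposition running the Wiener process forward) the conditional probability that a Brownian motion started at distance $\geq|x|/2$ from $x$ reaches $x$ within a further time $\eps$ while oscillating by less than $\delta$ is at most $C\,\p\{\sup_{s\le\eps}|w(s)|<\delta\}$, which tends to $0$ as $\delta\to0$ uniformly in $x$. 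In the second regime $|x|<\sqrt\delta$, I simply bound the probability by $\p\{\sigma_x\geq\eps\}\le \p\{|w(\eps)-x|\text{ small}\}$... no — better: use that for $|x|<\sqrt\delta$ the event forces $|w(\sigma_x-\eps)-x|<\delta$, i.e. $w(\sigma_x-\eps)\in(-2\sqrt\delta,2\sqrt\delta)$ roughly, and then $\sup_{s\in[0,\eps]}|v(s)|$ for the forward Wiener process $v$ must be... this also gives $\p\{\sup_{s\le\eps}|w(s)|<2\sqrt\delta\}\to0$. In both cases the bound is uniform in $x$.

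The main obstacle I anticipate is handling the conditioning on $\{\sigma_x\geq\eps\}$ cleanly: $\sigma_x$ is not independent of the behaviour of $w$ on $[\sigma_x-\eps,\sigma_x]$, so one cannot naively factorize. The cleanest route around this is to avoid time-reversal entirely and instead argue forward: let $\rho$ be the last time before $\sigma_x$ that $w$ equals $x-\delta$ or $x+\delta$ (on the event in question, such a time exists and $\rho<\sigma_x-\eps$ would contradict the oscillation bound, so in fact $\rho\ge\sigma_x-\eps$ is forced to fail — hence on the event, $w$ never touches $x\pm\delta$ in $[\sigma_x-\eps,\sigma_x]$, meaning $w$ enters the strip $(x-\delta,x+\delta)$ no later than $\sigma_x-\eps$ and stays). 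Then apply the strong Markov property at the \emph{first} entry time $\rho'$ into the strip: the post-$\rho'$ process is a Wiener process started somewhere in $[x-\delta,x+\delta]$, and we need it to (a) hit $x$ and (b) not exit the strip for time at least $\eps$ — but a Wiener process started within $\delta$ of a point, conditioned to stay within $2\delta$ of it, cannot survive time $\eps$ with probability more than $\sup_y\p_y\{\text{stay in }(x-2\delta,x+2\delta)\text{ for time }\eps\}\le \p\{\sup_{s\le\eps}|w(s)|<2\delta\}\to0$, uniformly in $x$ and in the starting point. This last estimate is the standard small-ball bound $\p\{\sup_{s\le\eps}|w(s)|<r\}\to0$ as $r\to0$, and it finishes the proof.
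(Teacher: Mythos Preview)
Your final approach---apply the strong Markov property at the first entry time $\rho'$ into the strip $(x-\delta,x+\delta)$---is essentially the paper's, but your step (b) is a genuine gap. The original event does \emph{not} force the post-$\rho'$ process to stay in the strip for time $\eps$: between $\rho'$ and $\sigma_x-\eps$ the path may exit and re-enter the strip arbitrarily often, so the small-ball inequality you write at the end is unjustified. What the event \emph{does} force, for $|x|>\delta$ (so that $\rho'=\sigma_{x-\delta\,\sgn x}$), is merely $\rho'\le\sigma_x-\eps$, i.e.\ the post-$\rho'$ process takes at least time $\eps$ to first hit $x$. By the strong Markov property this has probability $\p\{\max_{s\in[0,\eps]}w(s)<\delta\}$, which tends to $0$ uniformly in $x$---and that is the bound you actually need.

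This is exactly what the paper does. It fixes $\eps_1>0$, chooses $\rho>0$ so that $\p\{\sigma_x\ge\eps\}<\eps_1$ for all $|x|<\rho$ (cleaner than your $|x|<\sqrt\delta$ split, which you never finish), and for $|x|\ge\rho$ and $\delta<\rho$ observes that the event implies $\sigma_{x-\delta\,\sgn x}\le\sigma_x-\eps$; strong Markov at $\sigma_{x-\delta\,\sgn x}$ then gives $F(x,\delta)\le\p\{\max_{s\in[0,\eps]}w(s)<\delta\}$. The time-reversal and last-exit-time detours in your first two paragraphs are unnecessary.
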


\begin{proof}
Take $\eps_1>0$ and consider
$$
F(x,\delta):=\p\left\{\sigma_x\geq\eps,\ \sup\limits_{t\in[\sigma_x-\eps,\sigma_x]}|w(t)-x|<\delta\right\}\leq\p\left\{\sigma_x\geq\eps\right\}
$$
It is clear that there exists $\rho>0$ such that for all $|x|<\rho$ and all $\delta>0$ $F(x,\delta)<\eps_1$. Let $|x|\geq\rho$ and $\delta<\rho$, estimate
\begin{align*}
F(x,\delta)&=\p\left\{\sigma_{x-\delta\cdot\sgn x}\leq\sigma_x-\eps,\ \sup\limits_{t\in[\sigma_x-\eps,\sigma_x]}|w(t)-x|<\delta\right\} \\
&\leq\p\left\{\sigma_{x-\delta\cdot\sgn x}\leq\sigma_x-\eps\right\}\leq\p\left\{\max\limits_{t\in[0,\eps]}w(t)<\delta\right\}.
\end{align*}
Here we used the strong Markov property of a Wiener process. Hence, there exists $\rho_1\leq\rho$ such that for all $\delta<\rho_1$ and $|x|\geq\rho$
$$
F(x,\delta)\leq\p\{\max\limits_{t\in[0,\eps]}w(t)<\delta\}<\eps_1.
$$
This proves Lemma~\ref{lemm_sup}.
\end{proof}

Let us continue the proof of Lemma~\ref{lemm_conv}. Take $\eps_1>0$ and consider for a fixed $\eps>0$
\begin{align}\label{estim}
\begin{split}
\p\{\tau_n-\tau\geq\eps\}&\leq\p\left\{\tau+\eps\leq\tau_n,\ \sup\limits_{[\tau_n-\eps,\tau_n]}|z_n(t)|<\delta\right\}\\
&+\p\left\{\tau+\eps\leq\tau_n,\ \sup\limits_{[\tau_n-\eps,\tau_n]}|z_n(t)|\geq\delta\right\}\\
&\leq\p\left\{\tau+\eps\leq\tau_n,\ \sup\limits_{[\tau_n-\eps,\tau_n]}|z_n(t)|<\delta\right\}\\
&+\p\left\{\sup\limits_{[\tau_n-\eps,\tau_n]}|z_n(t\wedge\tau_n)-z(t\wedge\tau)|\geq\delta\right\}\\
&\leq\p\left\{\tau+\eps\leq\tau_n,\ \sup\limits_{[\tau_n-\eps,\tau_n]}|z_n(0)+w_n(\langle z_n\rangle_t)|<\delta\right\}\\
&+\p\left\{\sup\limits_{[0,T]}|z_n(t\wedge\tau_n)-z(t\wedge\tau)|\geq\delta\right\}.
\end{split}
\end{align}
Here $\{w_n(t),\ t\geq 0,\ n\geq 1\}$ is a system of Wiener processes such that
$$
z_n(t)=z_n(0)+w_n(\langle z_n(\cdot)\rangle_t),\quad t\in[0,T].
$$
Set
$$
\sigma_n=\inf\{t:\ z_n(0)+w_n(t)=0\}
$$
and
$$
\widetilde{\eps}=p\eps,
$$
where $p$ is defined by~\eqref{rest_char}. {\cob It should be noted that $\langle z_n(\cdot)\rangle_{\tau_n}=\sigma_n$. So, since $\langle z_n(\cdot)\rangle_0=0$,~\eqref{rest_char} implies $p\tau_n\leq\sigma_n$. Moreover, by~\eqref{rest_char}, $$\langle z_n(\cdot)\rangle_{\tau_n-\eps}\leq\langle z_n(\cdot)\rangle_{\tau_n}-p(\tau_n-(\tau_n-\eps))=\sigma_n-\widetilde{\eps}.$$ Now we can estimate
\begin{align*}
\p\Bigg\{\tau&+\eps\leq\tau_n,\ \sup\limits_{[\tau_n-\eps,\tau_n]}|z_n(0)+w_n(\langle z_n\rangle_t)|<\delta\Bigg\}\\
&\leq\p\left\{p\eps\leq p\tau_n,\ \sup\limits_{[\langle z_n(\cdot)\rangle_{\tau_n-\eps},\langle z_n(\cdot)\rangle_{\tau_n}]}|z_n(0)+w_n(t)|<\delta\right\}\\
&\leq\p\left\{\widetilde{\eps}\leq\sigma_n,\ \sup\limits_{[\sigma_n-\widetilde{\eps},\sigma_n]}|z_n(0)+w_n(t)|<\delta\right\}\\
&=\int_{\R}F(x,\delta)(\p\circ z_n(0)^{-1})(dx)\leq\sup\limits_{x\in\R}F(x,\delta).
\end{align*}
}
By Lemma~\ref{lemm_sup} there exists $\delta_1>0$ such that
$$
\sup\limits_{x\in\R}F(x,\delta_1)<\frac{\eps_1}{2}.
$$
Next, choosing $N\in\N$ such that for all $n\geq N$
$$
\p\left\{\sup\limits_{[0,T]}|z_n(t\wedge\tau_n)-z(t\wedge\tau)|\geq\delta_1\right\}<\frac{\eps_1}{2}
$$
and using estimate~\eqref{estim} we obtain
$$
\p\{\tau_n-\tau\geq\eps\}<\eps_1,\quad \mbox{for all}\ n\geq N.
$$
Lemma~\ref{lemm_conv} is proved.
\end{proof}

\begin{lem}\label{lemma_conv_of_moment_of_meeting}
Let $u,v\in[0,1]$, the sequence $\{\tau_{u,v}^n\}_{n\geq 1}$ be defined by~\eqref{f_time_of_meeting_n} and
\begin{equation}\label{f_time_of_meeting}
\tau_{u,v}=\inf\{t:\ y(u,t)=y(v,t)\}\wedge T.
\end{equation}
Then
$$
\lim_{n\to\infty}\tau_{u,v}^n=\tau_{u,v}\quad\mbox{in probability}.
$$
\end{lem}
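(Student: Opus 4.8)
\textbf{Proof proposal for Lemma~\ref{lemma_conv_of_moment_of_meeting}.}
The plan is to reduce this to the abstract convergence result in Lemma~\ref{lemm_conv} by taking $z_n(t)=y_n(v,t)-y_n(u,t)$ and $z(t)=y(v,t)-y(u,t)$, $t\in[0,T]$. First I would dispose of the trivial case $u=v$, where $\tau_{u,v}^n=\tau_{u,v}=0$, and assume $u<v$ (the case $u>v$ being symmetric), so that by $(F3)$ and $(C3)$ the processes $z_n$ and $z$ are non-negative. The stopping times $\tau_{u,v}^n$ and $\tau_{u,v}$ are then precisely the first hitting times of $0$ by $z_n$ and $z$ respectively, as required in Lemma~\ref{lemm_conv}.

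Next I would verify the two hypotheses of Lemma~\ref{lemm_conv}. For the lower bound on the quadratic variation: by $(F1)$ each $z_n$ is a continuous (square integrable, hence local) martingale, and by $(F4)$, $(F5)$, for $t\le\tau_{u,v}^n$ the processes $y_n(u,\cdot)$ and $y_n(v,\cdot)$ have not yet met, so $\langle y_n(u,\cdot),y_n(v,\cdot)\rangle_t=0$ there and
$$
\langle z_n(\cdot)\rangle_t=\int_0^t\left(\frac{1}{m_n(u,s)}+\frac{1}{m_n(v,s)}\right)ds,
$$
whose integrand is $\ge 2$ since each mass $m_n(\cdot,s)\le 1$; thus $\frac{d}{dt}\langle z_n(\cdot)\rangle_t\ge 2$ on $[0,\tau_{u,v}^n]$, giving~\eqref{rest_char} with $p=2$ (via the remark following Lemma~\ref{lemm_conv}). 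For the a.s.\ convergence $z_n(\cdot\wedge\tau_{u,v}^n)\to z(\cdot\wedge\tau_{u,v})$ in $C([0,T],\R)$: on the probability space of Lemma~\ref{lemma_converg_a_s} (which we have agreed to work on), fix $\eps>0$; Corollary~\ref{coroll_mart_prop} gives $y_n(u,\eps\vee\cdot)\to y(u,\eps\vee\cdot)$ and $y_n(v,\eps\vee\cdot)\to y(v,\eps\vee\cdot)$ uniformly on $[0,T]$ a.s., hence $z_n(\eps\vee\cdot)\to z(\eps\vee\cdot)$ uniformly a.s.; combined with the continuity of $z_n$ and $z$ on all of $[0,T]$ and the fact that $z_n(0)=v_n-u_n\to v-u=z(0)$ (using $y_n(\cdot,0)=y(\cdot,0)$ up to the discretization), one upgrades this to uniform convergence on $[0,T]$, and then composing with the stopping operation $\cdot\wedge\tau_n$ is handled exactly as in Lemma~\ref{lemm_conv}'s setup. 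Lemma~\ref{lemm_conv} then yields $\tau_{u,v}^n\to\tau_{u,v}$ in probability.

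The main obstacle is the passage from convergence on $[\eps,T]$ (which is all Corollary~\ref{coroll_mart_prop} directly provides) to convergence on $[0,T]$ uniformly, i.e.\ controlling the processes near $t=0$ where the approximating masses are small and the diffusion rates blow up. I would handle this by noting that $z_n\ge 0$ is a supermartingale difference whose value at $0$ converges, and using Lemma~\ref{lemma_first_points}-type estimates (or directly the fact that $z_n(0)\to z(0)$ together with the martingale maximal inequality restricted to the stopped process) to show that $\sup_{t\in[0,\eps]}|z_n(t)-z_n(0)|$ is small uniformly in $n$ with high probability; sending $\eps\to 0$ after $n\to\infty$ then closes the gap. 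An alternative, perhaps cleaner, route is to observe that Lemma~\ref{lemm_conv} only requires the convergence $z_n(\cdot\wedge\tau_n)\to z(\cdot\wedge\tau)$, and on the event $\{\tau\ge\eps\}$ this convergence on $[0,T]$ follows from uniform convergence on $[\eps,T]$ together with the monotone (hence easily controlled) behaviour of the stopped non-negative processes on $[0,\eps]$; one then lets $\eps\to0$ and uses that $\p\{\tau=0\}$ contributes nothing since there $\tau_n\to 0$ as well by the first half of the argument. Either way, the remaining steps are the routine translation of the hypotheses of Lemma~\ref{lemm_conv}, which I would state briefly and then invoke.
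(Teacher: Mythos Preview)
Your overall plan—apply Lemma~\ref{lemm_conv} to $z_n=y_n(v,\cdot)-y_n(u,\cdot)$, verify $\frac{d}{dt}\langle z_n\rangle_t\ge 2$ on $[0,\tau_{u,v}^n]$ via $(F4)$, $(F5)$, and invoke Corollary~\ref{coroll_mart_prop} for the convergence—is the paper's strategy, and you have correctly isolated the only genuine difficulty: Corollary~\ref{coroll_mart_prop} gives $z_n\to z$ uniformly on $[\eps,T]$ only, not on $[0,T]$.

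Your two proposed fixes, however, do not close this gap as written. The first asks for $\sup_{t\in[0,\eps]}|z_n(t)-z_n(0)|$ to be small uniformly in $n$; this would follow from a bound $\E\langle z_n\rangle_\eps\le C\sqrt{\eps}$ uniform in $n$, but that is essentially Lemma~\ref{lemma_estim_expect_char}, which appears only later and for the limit $y$, and in any case is not what Lemma~\ref{lemma_first_points} or the maximal inequality alone provide (Lemma~\ref{lemma_first_points} controls a \emph{spatial} increment, not the temporal oscillation of a fixed difference near $t=0$). The second fix, working on $\{\tau\ge\eps\}$, still needs the stopped processes to agree on $[0,\eps]$ and runs into the same problem.

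The paper sidesteps $[0,\eps]$ entirely by a time shift: set
\[
z_n(t)=y_n(v,\eps+t)-y_n(u,\eps+t),\qquad z(t)=y(v,\eps+t)-y(u,\eps+t),\quad t\in[0,T-\eps].
\]
Corollary~\ref{coroll_mart_prop} now gives $z_n\to z$ in $C[0,T-\eps]$ a.s.\ with no further work; since each $z_n$ (and $z$, being a nonnegative martingale by Proposition~\ref{prop_prolong} and $(C3)$) is absorbed at $0$, the stopped processes coincide with the unstopped ones, and Lemma~\ref{lemm_conv} applies directly. The resulting hitting times satisfy $\tau_n+\eps=\tau_{u,v}^n\vee\eps$ and $\tau+\eps=\tau_{u,v}\vee\eps$, so $\tau_{u,v}^n\vee\eps\to\tau_{u,v}\vee\eps$ in probability; since $|a\vee\eps-a|\le\eps$, sending $\eps\downarrow 0$ gives the claim. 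The missing idea is: shift the time origin to $\eps$ rather than try to estimate on $[0,\eps]$.
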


\begin{proof}
Let $u<v$ and $\eps>0$. Set
\begin{align*}
z_n(t)&=y_n(v,\eps+t)-y_n(u,\eps+t),\quad t\in[0,T-\eps],\\
z(t)&=y(v,\eps+t)-y(u,\eps+t),\quad t\in[0,T-\eps],\\
\tau_n=\inf\{t&:\ z_n(t)=0\}\wedge T,\quad\tau=\inf\{t:\ z(t)=0\}\wedge T.
\end{align*}
Then
$$
\langle z_n(\cdot)\rangle_{t\wedge\tau_n}=\langle y_n(u,\cdot)\rangle_{t\wedge\tau_n}+\langle y_n(v,\cdot)\rangle_{t\wedge\tau_n}\geq 2(t\wedge\tau_n).
$$
By Lemma~\ref{lemm_conv}, $\lim_{n\to\infty}\tau_n=\tau$ in probability. The equations $\tau_n+\eps=\tau_{u,v}^n\vee\eps$ and $\tau+\eps=\tau_{u,v}\vee\eps$ easily imply the assertion of the lemma.
\end{proof}

\begin{lem}\label{lemma_cond_5}
For all $u,v\in[0,1]$ and $t\in[0,T]$, $\langle y(u,\cdot),y(v,\cdot)\rangle_{t\wedge\tau_{u,v}}=0$.
\end{lem}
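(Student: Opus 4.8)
We may assume $u<v$, the case $u=v$ being trivial (then $\tau_{u,v}=0$) and the case $u>v$ symmetric. The plan is to pass to the limit in the finite-system relation $\langle y_n(u,\cdot),y_n(v,\cdot)\rangle_{\cdot\wedge\tau^n_{u,v}}=0$, which is condition $(F5)$ (together with the definitions of $y_n$ and $m_n$). The only convergence of $y_n$ towards $y$ at our disposal is Corollary~\ref{coroll_mart_prop}, which is valid only away from $t=0$ (the diffusion rate of the particles explodes as $t\to0$), so we fix $\eps>0$, work on $[\eps,T]$, and stop the trajectories at $\widehat\sigma^n:=\tau^n_{u,v}\vee\eps$, resp. $\widehat\sigma:=\tau_{u,v}\vee\eps$.

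First step (finite $n$): the processes $B^u_n(t):=y_n(u,t\wedge\widehat\sigma^n)$ and $B^v_n(t):=y_n(v,t\wedge\widehat\sigma^n)$, $t\in[\eps,T]$, are continuous $(\F^{y_n}_t)_{t\ge\eps}$-martingales by $(F1)$, and a short case analysis over $\{\tau^n_{u,v}\ge\eps\}$ and its complement, using $(F5)$, shows that their joint quadratic variation on $[\eps,T]$ vanishes identically. Hence $B^u_nB^v_n$ is a continuous local martingale on $[\eps,T]$; stopping it at $\theta^n_k:=\inf\{t\ge\eps:\ |B^u_n(t)|\vee|B^v_n(t)|\ge k\}\wedge T$ one obtains a process that is bounded by $k^2$ except on the $\F^{y_n}_\eps$-measurable event $\{|y_n(u,\eps)|\vee|y_n(v,\eps)|\ge k\}$, on which it is frozen at its value $y_n(u,\eps)y_n(v,\eps)$. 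Lemma~\ref{lemma_expectation}$(i)$ (uniform $L^1$-bound on $y_n(u,\eps)$) together with this freezing provides enough integrability to let the martingale property of $(B^u_nB^v_n)^{\theta^n_k}$ survive in the limit.

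Second step (limit): by Corollary~\ref{coroll_mart_prop}, Lemma~\ref{lemma_conv_of_moment_of_meeting} ($\tau^n_{u,v}\to\tau_{u,v}$ in probability) and uniform continuity of the limit paths on $[\eps,T]$, one gets $B^u_n\to B^u:=y(u,\cdot\wedge\widehat\sigma)$ and $B^v_n\to B^v:=y(v,\cdot\wedge\widehat\sigma)$ uniformly on $[\eps,T]$ in probability; moreover, by a Fubini argument (for a fixed path the set of levels $k$ at which the hitting time of $|B^u|\vee|B^v|$ is discontinuous is countable, hence Lebesgue-null), for Lebesgue-almost every $k$ one has $\theta^n_k\to\theta_k:=\inf\{t\ge\eps:\ |B^u(t)|\vee|B^v(t)|\ge k\}\wedge T$ in probability. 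Reasoning exactly as in the proof of Proposition~\ref{prop_prolong} (Proposition~9.1.17~\cite{Jacod:2003}) with the uniform integrability noted above, the stopped products converge and $(B^uB^v)^{\theta_k}$ is an $(\F_t)$-martingale for such $k$. Since $B^u,B^v$ are continuous $(\F_t)$-martingales (Proposition~\ref{prop_prolong}), integration by parts on $[\eps,T]$ writes $\langle B^u,B^v\rangle^{\theta_k}$ as the difference of $(B^uB^v)^{\theta_k}$, the constant $B^u(\eps)B^v(\eps)$ and two stopped stochastic integrals, so it is a continuous finite-variation $(\F_t)$-local martingale starting from $0$, hence $\langle B^u,B^v\rangle^{\theta_k}\equiv0$. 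Letting $k\to\infty$ along good levels ($\theta_k\uparrow T$ since the paths are bounded) gives $\langle B^u,B^v\rangle\equiv0$ on $[\eps,T]$, i.e. $\langle y(u,\cdot),y(v,\cdot)\rangle_t=\langle y(u,\cdot),y(v,\cdot)\rangle_\eps$ for all $t\in[\eps,\tau_{u,v}\vee\eps]$.

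Finally, intersecting over rational $\eps\in(0,T]$ and using that $t\mapsto\langle y(u,\cdot),y(v,\cdot)\rangle_t$ is continuous with value $0$ at $t=0$, we let $\eps\downarrow0$ to obtain $\langle y(u,\cdot),y(v,\cdot)\rangle_t=0$ for every $t\in[0,\tau_{u,v}]$, which is the claim. The main obstacle is the second step: one cannot work on all of $[0,T]$ because the particles' diffusion rate blows up near $0$, and on $[\eps,T]$ the only uniform moment control available for $y_n(u,\cdot)$ is the second-moment bound of Lemma~\ref{lemma_expectation} (merely first moments when $u$ or $v$ is an endpoint of $[0,1]$), which by itself is not enough for the uniform integrability needed to pass the quadratic-variation identity to the limit — hence the detour through the level-truncation $\theta^n_k$ and the care with the value at time $\eps$.
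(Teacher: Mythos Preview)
Your proof is correct and follows essentially the paper's approach: show that the stopped product $y(u,(\cdot\wedge\tau_{u,v})\vee\eps)\,y(v,(\cdot\wedge\tau_{u,v})\vee\eps)$ is a local martingale by passing to the limit from $(F5)$ via Corollary~\ref{coroll_mart_prop}, Lemma~\ref{lemma_conv_of_moment_of_meeting} and Proposition~9.1.17~\cite{Jacod:2003}, and then let $\eps\downarrow0$. The paper's argument is shorter because Proposition~9.1.17 already delivers the \emph{local}-martingale property of the limit without any uniform-integrability input, so your level-truncation $\theta^n_k$ (and the accompanying Fubini argument on hitting-time continuity) is unnecessary---once the product is a local martingale the bracket vanishes automatically.
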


\begin{proof}
For fixed $u,v\in[0,1]$ denote
$$
M_n^{\eps}(t)=y_n(u,(t\wedge\tau_{u,v}^n)\vee\eps)y_n(v,(t\wedge\tau_{u,v}^n)\vee\eps),\quad t\in[0,T].
$$
First we will show that $M_n^{\eps}(\cdot)$ is an $(\F_{(t\wedge\tau_{u,v}^n)\vee\eps}^{y_n})$-martingale, where $(\F_t^{y_n})$ is generated by $y_n$ (see~\eqref{f_filtration}).

It is well known that $M_n^{\eps}(\cdot)-\langle y_n(u,\cdot),y_n(v,\cdot)\rangle_{(\cdot\wedge\tau_{u,v}^n)\vee\eps}$ is an $(\F_{(t\wedge\tau_{u,v}^n)\vee\eps}^{y_n})$-martingale. Note that $\langle y_n(u,\cdot),y_n(v,\cdot)\rangle_{(\cdot\wedge\tau_{u,v}^n)\vee\eps}$ is also an $(\F_{(t\wedge\tau_{u,v}^n)\vee\eps}^{y_n})$-mar\-tingale because
$$
\langle y_n(u,\cdot),y_n(v,\cdot)\rangle_{(t\wedge\tau_{u,v}^n)\vee\eps}=\langle y_n(u,\cdot),y_n(v,\cdot)\rangle_{\eps}
$$
and it is measurable with respect to $\F_{(0\wedge\tau_{u,v}^n)\vee\eps}^{y_n}$.
Thus $M_n^{\eps}(\cdot)$ is an $(\F_{(t\wedge\tau_{u,v}^n)\vee\eps}^{y_n})$-martingale. Passing to the limit as $n\to\infty$ and using the continuity of the map $(f,a)\to f(\cdot\wedge a)$ from $C[0,T]\times [0,T]$ to $C[0,T]$ and Lemma~\ref{lemma_conv_of_moment_of_meeting} we have
$$
M_n^{\eps}(\cdot)\to y(u,(\cdot\wedge\tau_{u,v})\vee\eps)y(v,(\cdot\wedge\tau_{u,v})\vee\eps)\quad\mbox{in}\ \ C[0,T],\ \ \mbox{in probability}.
$$
By Proposition~9.1.17~\cite{Jacod:2003}, $y(u,(\cdot\wedge\tau_{u,v})\vee\eps)y(v,(\cdot\wedge\tau_{u,v})\vee\eps)$ is a local martingale. Since
$$
y(u,(\cdot\wedge\tau_{u,v})\vee\eps)y(v,(\cdot\wedge\tau_{u,v})\vee\eps)\to y(u,\cdot\wedge\tau_{u,v})y(v,\cdot\wedge\tau_{u,v})\quad\mbox{a.s.},\ \ \eps\to 0,
$$
again, $y(u,\cdot\wedge\tau_{u,v})y(v,\cdot\wedge\tau_{u,v})$ is a local martingale. It proves the lemma.
\end{proof}

To check Condition $(C4)$ we will verify that for each $t\in(0,T]$ the set $\{y(u,t),\ u\in[0,1]\}$ is finite.

\begin{lem}\label{lemma_expect_of_N}
Let $N(t)$ denote the number of distinct points of $\{y(u,t),\ u\in[0,1]\}$, for each $t\in(0,T]$, i.e.
$$
N(t)=|\{y(u,t),\ u\in[0,1]\}|.
$$
Then there exists a constant $C$ such that $\E N(t)\leq\frac{C}{\sqrt{t}}$, $t\in(0,T]$.
\end{lem}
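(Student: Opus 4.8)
The plan is to bound $\E N(t)$ by comparing the coalescing system $\{y(u,t)\}$ with a countable approximating family and exploiting the lower bound on the quadratic variations together with the known estimate for coalescing Brownian motions cited in Section~7.1 of~\cite{Dorogovtsev:2007:en}. Concretely, for fixed $t\in(0,T]$ and $m\in\N$ I would consider the $m$ particles $y\!\left(\frac{k}{m},\cdot\right)$, $k\in[m]$, and let $N_m(t)$ be the number of distinct values among $y\!\left(\frac{k}{m},t\right)$, $k\in[m]$. By monotonicity (condition $(C3)$, established in Proposition~\ref{prop_prolong}) and the right-continuity of $y(\cdot,t)$, the values $y(u,t)$, $u\in[0,1]$, form a finite or countable set, and $N_m(t)\uparrow N(t)$ as $m\to\infty$; by the monotone convergence theorem it therefore suffices to bound $\sup_m\E N_m(t)$ by $C/\sqrt t$.

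To estimate $\E N_m(t)$, I would write $N_m(t)=1+\sum_{k=2}^m\I_{\{y(k/m,t)>y((k-1)/m,t)\}}=1+\sum_{k=2}^m\I_{\{t<\tau_{(k-1)/m,\,k/m}\}}$, where $\tau_{u,v}$ is the meeting time~\eqref{f_time_of_meeting}. Thus $\E N_m(t)=1+\sum_{k=2}^m\p\{\tau_{(k-1)/m,\,k/m}>t\}$. For each consecutive pair set $\xi_k(s)=y\!\left(\frac{k}{m},s\right)-y\!\left(\frac{k-1}{m},s\right)$; this is a continuous martingale starting from $\frac1m$ with $\langle\xi_k\rangle_s\ge 2s$ on $[0,\tau_{(k-1)/m,k/m}]$, by condition $(C4)$ (monotonicity of $m(\cdot,\cdot)\le 1$) and condition $(C5)$, exactly as in the computation in the proof of Lemma~\ref{lemma_tightness_in_C}. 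Applying Lemma~\ref{lemma_enequality_for_stop_time} with $b=2$ and $a=0$ (after reflecting, or applying it to $a=-1/m$ for the shifted process), I get $\p\{\tau_{(k-1)/m,k/m}>t\}\le\p\{\widehat\tau_k>t\}$, where $\widehat\tau_k$ is the meeting time of two independent Wiener processes started a distance $\frac1m$ apart (time-changed by $b=2$). Summing, $\E N_m(t)\le 1+\sum_{k=2}^m\p\{\widehat\tau_k>t\}$, and the right side is precisely $1+\E\widehat N_m(t)$ where $\widehat N_m(t)$ counts distinct points among $m$ coalescing Brownian particles started at $\frac{k}{m}$, $k\in[m]$. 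By the standard estimate quoted in Section~7.1 of~\cite{Dorogovtsev:2007:en} (already used in the proof of Proposition~\ref{prop_tight_xi}), $\E\widehat N_m(t)\le C\,(\text{length of }[0,1])/\sqrt t=C/\sqrt t$ uniformly in $m$. Letting $m\to\infty$ yields $\E N(t)\le C'/\sqrt t$.

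The main obstacle is making the comparison step fully rigorous: one must be careful that the bound from Lemma~\ref{lemma_enequality_for_stop_time} is applied correctly (it is stated for hitting a level $a$ by a martingale whose bracket dominates $bt$, so the relevant process is $\xi_k$ hitting $0$ from above, equivalently a nonnegative supermartingale-type argument) and that the resulting dominating quantities $\p\{\widehat\tau_k>t\}$ genuinely assemble into the expected number of clusters of a single coalescing Brownian flow on $m$ points — this uses the fact that for the Brownian case the events $\{\widehat\tau_k>t\}$ correspond to disjointness of consecutive trajectories, so their sum is exactly $\E\widehat N_m(t)-1$. A minor technical point is the passage $N_m(t)\uparrow N(t)$, which requires that any two distinct values $y(u,t)<y(v,t)$ are eventually separated by some dyadic rational $k/m$; this follows from right-continuity of $u\mapsto y(u,t)$ together with monotonicity, since the set $\{u:y(u,t)=y(v,t)\}$ is then an interval containing $v$ with nonempty interior unless it is a single point, and in either case a fine enough $m$ detects the jump.
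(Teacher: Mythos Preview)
Your argument contains a circularity: you invoke condition $(C4)$ to obtain the lower bound $\langle\xi_k(\cdot)\rangle_s\ge 2s$ on $[0,\tau_{(k-1)/m,\,k/m}]$, but at this stage of the paper $(C4)$ has not been verified for the limit process $y$. In fact the proof of $(C4)$ (Lemma~\ref{lemma_cond_4}) relies on $y(\cdot,\eps)$ being a step function with $m(u,\eps)>0$, which is Corollary~\ref{coroll_step_function}, and that corollary is itself deduced from the lemma you are trying to prove. So the identity $\langle y(u,\cdot)\rangle_s=\int_0^s\frac{dr}{m(u,r)}$, and hence the bound $\langle y(u,\cdot)\rangle_s\ge s$, is not available here, and Lemma~\ref{lemma_enequality_for_stop_time} cannot be applied to the differences $y(k/m,\cdot)-y((k-1)/m,\cdot)$. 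The computation you cite from the proof of Lemma~\ref{lemma_tightness_in_C} is carried out for $y_n$, where $(F4)$ holds by construction; it does not transfer to $y$ for free.

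The paper avoids this by running the comparison on the finite approximating systems $\{x_k^n\}$ rather than on $y$. There $(F4)$ and $(F5)$ hold by construction, so the quadratic-variation lower bound is immediate, and Lemma~\ref{lemma_enequality_for_stop_time} together with the Dorogovtsev estimate yields $\E\widetilde N_n(t)\le C/\sqrt t$ uniformly in $n$, via the elementary inequality $t\,\widetilde N_n(t)\le\sum_{k}\gamma_k^n(t)$. The passage to the limit is then through $N(t)\le\varliminf_{n\to\infty}N_n(t)$ (a consequence of the a.s.\ convergence $y_n(u,\cdot\vee\eps)\to y(u,\cdot\vee\eps)$ from Corollary~\ref{coroll_mart_prop}) and Fatou's lemma. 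Your comparison idea is the same as the paper's; it simply has to be executed at the level of the prelimit systems.

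A secondary point: your justification of $N_m(t)\uparrow N(t)$ is too brief. Single-point preimages do require an argument; one can show that right-continuity and monotonicity force any interior single-point preimage to be accompanied by infinitely many distinct values nearby, so when $N(t)<\infty$ every preimage (except possibly that of $y(1,t)$, which is always sampled since $1=m/m$) has positive length and is eventually hit.
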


\begin{cor}\label{coroll_step_function}
For all $t\in(0,T]$, $y(\cdot,t)$ is a step function.
\end{cor}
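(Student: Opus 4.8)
The plan is to read the corollary off from Lemma~\ref{lemma_expect_of_N} together with the monotonicity of $u\mapsto y(u,t)$. First I would fix $t\in(0,T]$. Since $N(t)\ge 0$ and, by Lemma~\ref{lemma_expect_of_N}, $\E N(t)\le C/\sqrt t<\infty$, the random variable $N(t)$ is finite almost surely; equivalently, on an event of probability one the range $R_t=\{y(u,t):\ u\in[0,1]\}$ is a finite set, say $R_t=\{v_1<\dots<v_m\}$ with $m=N(t)$.

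Next I would use that $\{y(u,t),\ u\in[0,1],\ t\in[0,T]\}$ is a random element of $D([0,1],C[0,T])$ (Proposition~\ref{prop_prolong}), so that for the fixed $t$ the map $u\mapsto y(u,t)$ is right-continuous with left limits and, by condition $(C3)$ (which holds by Proposition~\ref{prop_prolong}), non-decreasing. For a non-decreasing function each preimage $\{u\in[0,1]:\ y(u,t)=v_i\}$ is convex, hence a non-empty interval; these $m$ intervals are pairwise disjoint with union $[0,1]$, and right-continuity of $y(\cdot,t)$ makes all of them, except possibly the last, half-open of the form $[u_{i-1},u_i)$. Thus $y(\cdot,t)$ is constant on each of finitely many consecutive intervals, i.e.\ a step function, as claimed.

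I do not expect any real obstacle: the probabilistic content is entirely contained in Lemma~\ref{lemma_expect_of_N}, while the implication ``finite range $+$ monotone $+$ right-continuous $\Rightarrow$ step function'' is elementary. The only point to keep in mind is that the null set above a priori depends on $t$; if a single almost sure event valid for all $t\in(0,T]$ is needed, one can run the argument along a countable dense set of times and extend it using that $t\mapsto N(t)$ is non-increasing (coalesced coordinates stay coalesced), but the statement as phrased requires only the fixed-$t$ version.
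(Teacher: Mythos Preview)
Your argument is correct and is exactly what the paper intends: the corollary is stated without proof, as an immediate consequence of Lemma~\ref{lemma_expect_of_N} (finiteness of $N(t)$ a.s.) combined with the monotonicity $(C3)$ and the fact that $y\in D([0,1],C[0,T])$ already established in Proposition~\ref{prop_prolong}. Your remark about the null set depending on $t$ is a valid point but, as you note, the statement only asks for the fixed-$t$ version.
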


\begin{proof}[Proof of Lemma~\ref{lemma_expect_of_N}]
Denote by $N_n(t)$ the number of distinct points of $\{y_n(u,t),\ u\in[0,1]\}$. First we prove that there exists a constant $C$ which does not depend on $n$ such that $\E N_n(t)\leq\frac{C}{\sqrt{t}}$, $t\in(0,T]$. To show this, it is enough to consider the system $\{x_k^n(t),\ t\in[0,T],\ k\in[n]\}$, which was constructed in Section~\ref{section_finite_system}, and check that $\E \widetilde{N}_n(t)\leq\frac{C}{\sqrt{t}}$, $t\in(0,T]$, where
$$
\widetilde{N}_n(t)=|\{x_k^n(t),\ k\in[n]\}|.
$$
Denote
\begin{align*}
\gamma_1^n(t)&=t,\\
\gamma_k^n(t)&=\inf\{s:\ x_{k-1}^n(s)=x_k^n(s)\}\wedge t,\quad k=2,\ldots,n,\\
\gamma^n(t)&=\sum_{k=1}^n\gamma_k^n(t)
\end{align*}
Let $\{z_k^n(t),\ t\in[0,T],\ k\in[n]\}$ be the system of coalescing Brownian particles which start from $\frac{k}{n}$, $k\in[n]$. Define $\widehat{\gamma}^n(t)$, $t\in(0,T]$, similarly as $\gamma^n(t)$, $t\in(0,T]$, replacing $x_k^n(\cdot)$, $k\in[n]$, with $z_k^n(\cdot)$, $k\in[n]$. By Lemma~\ref{lemma_enequality_for_stop_time} and Lemma~7.1.1~\cite{Dorogovtsev:2007:en}, there exists a constant $C$ such that
$$
\E\gamma^n(t)\leq\E\widehat{\gamma}^n(t)\leq C\sqrt{t},\quad t\in(0,T],\ \ n\geq 1.
$$
Note that
$$
t\widetilde{N}_n(t)\leq\gamma^n(t),\quad t\in(0,T],
$$
so
$$
\E \widetilde{N}_n(t)\leq\frac{C}{\sqrt{t}},\quad t\in(0,T].
$$
It is easy to check that
$$
N(t)\leq\varliminf\limits_{n\to\infty}N_n(t).
$$
Hence, by Fatou's lemma, $\E N(t)\leq\frac{C}{\sqrt{t}}$, $t\in(0,T]$. The lemma is proved.
\end{proof}

\begin{lem}\label{lemma_cond_4}
For every $u\in[0,1]$ and $t\in[0,T]$
$$
\langle y(u,\cdot)\rangle_t=\int_0^t\frac{ds}{m(u,s)},
$$
where $m(u,t)=\Leb\{v:\ \exists s\leq t\ y(v,s)=y(u,s)\}$.
\end{lem}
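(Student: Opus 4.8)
The plan is to transfer condition (F4) from the approximating systems to the limit by identifying $\langle y(u,\cdot)\rangle$ through uniqueness of the Doob--Meyer decomposition, working first on an interval $[\eps,T]$ and then letting $\eps\to 0$. Throughout, $y_n$ denotes the renamed approximating systems, which satisfy $\langle y_n(u,\cdot)\rangle_t=\int_0^t ds/m_n(u,s)$ (this is $(F4)$ transported to $y_n$ via~\eqref{f_m_n}) and converge as in Corollary~\ref{coroll_mart_prop}. Fix $u\in[0,1]$ and $\eps>0$. Since $m(u,\cdot)$ is non-decreasing and $m(u,\eps)>0$ a.s. --- the latter follows, e.g., from the estimate $\p\{m_n(u,\eps)<\gamma\}\le C\gamma$ obtained in the proof of Lemma~\ref{lemma_tightness_in_C} together with Fatou's lemma, or from Corollary~\ref{coroll_step_function} when $u\in(0,1)$ --- the function $s\mapsto 1/m(u,s)$ is a.s. bounded on $[\eps,T]$, so $\int_\eps^t ds/m(u,s)$ is a.s. finite and continuous in $t$. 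The aim is to prove
\[
\langle y(u,\cdot)\rangle_t-\langle y(u,\cdot)\rangle_\eps=\int_\eps^t\frac{ds}{m(u,s)},\qquad t\in[\eps,T],
\]
for then, letting $\eps\to 0$ along a sequence, the left side converges to $\langle y(u,\cdot)\rangle_t$ by continuity of $\langle y(u,\cdot)\rangle$ at $0$ and the right side converges to $\int_0^t ds/m(u,s)$ by monotone convergence, which gives the lemma (and incidentally shows $\int_0^t ds/m(u,s)<\infty$ a.s.).

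The central step is to show that $\int_\eps^{\,\cdot}\frac{ds}{m_n(u,s)}\to\int_\eps^{\,\cdot}\frac{ds}{m(u,s)}$ in probability in $C[0,T]$. From the definitions, $m_n(u,s)=\Leb\{v:\tau^n_{u,v}\le s\}$ and $m(u,s)=\Leb\{v:\tau_{u,v}\le s\}$, with $\tau^n_{u,v}$, $\tau_{u,v}$ as in~\eqref{f_time_of_meeting_n} and~\eqref{f_time_of_meeting}. By Lemma~\ref{lemma_conv_of_moment_of_meeting}, $\tau^n_{u,v}\to\tau_{u,v}$ in probability for each $v$, and since $\tau_{u,v}$ has no atom at $s$ for a.e. $(s,v)$, Fubini's theorem yields $\E|m_n(u,s)-m(u,s)|\to 0$ for a.e. $s$, hence $m_n(u,\cdot)\to m(u,\cdot)$ in $L^1([\eps,T]\times\Omega)$. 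To pass to the reciprocals I would truncate at a level $\gamma$: on $\{m_n(u,\eps)\ge\gamma\}$ one has $1/m_n(u,s)\le 1/\gamma$ for $s\in[\eps,T]$, while $\p\{m_n(u,\eps)<\gamma\}\le C\gamma$ and, by Fatou, the analogous bound holds for $m(u,\eps)$; combining this with a subsequence argument, dominated convergence, and the monotonicity in $t$ together with continuity of the limit, one obtains the asserted convergence in $C[0,T]$.

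It then remains to identify the limit. By $(F4)$ for $y_n$ and It\^o's formula, $(y_n(u,\eps\vee t))^2-\int_\eps^{\eps\vee t}\frac{ds}{m_n(u,s)}$ is an $(\F^{y_n}_{\eps\vee t})$-martingale. The squared term converges in $C[0,T]$ a.s. by Corollary~\ref{coroll_mart_prop}, and the integral term converges in probability in $C[0,T]$ by the previous paragraph; reasoning exactly as in the proof of Proposition~\ref{prop_prolong} (via Proposition~9.1.17~\cite{Jacod:2003}), the limiting process $(y(u,\eps\vee t))^2-\int_\eps^{\eps\vee t}\frac{ds}{m(u,s)}$ is a local martingale. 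On the other hand $(y(u,\eps\vee t))^2-\langle y(u,\eps\vee\cdot)\rangle_t$ is a local martingale because $y(u,\cdot)$ is a continuous martingale by Proposition~\ref{prop_prolong}. Subtracting, $\langle y(u,\eps\vee\cdot)\rangle_t-\int_\eps^{\eps\vee t}\frac{ds}{m(u,s)}$ is a continuous local martingale of finite variation starting from $0$, hence identically $0$, which is precisely the displayed identity.

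The hard part will be the middle step. Because $1/m_n(u,s)$ is not uniformly integrable as $s\downarrow 0$, the convergence of the time-integrals cannot be read off directly from the convergence of the masses; it has to be extracted from the a.e.-in-$s$ convergence $m_n(u,s)\to m(u,s)$ combined with the small-mass tail estimate $\p\{m_n(u,\eps)<\gamma\}\le C\gamma$, which is the reason we first localize to $[\eps,T]$ before sending $\eps\to 0$.
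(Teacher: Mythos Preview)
Your overall strategy---localize to $[\eps,T]$, prove the convergence of $\int_\eps^{\,\cdot} ds/m_n(u,s)$ to $\int_\eps^{\,\cdot} ds/m(u,s)$, identify the quadratic variation via uniqueness of the Doob--Meyer decomposition, and then send $\eps\to 0$---coincides with the paper's, and the identification step is exactly what the paper does (the paper phrases the last step as ``it is easy to see'' rather than spelling out the finite-variation-local-martingale argument, but the content is the same).

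The genuine difference is in how the convergence of the time-integrals is obtained. The paper works \emph{pathwise along a subsequence}: it selects $n'$ so that $\tau^{n'}_{u,v}\to\tau_{u,v}$ a.s.\ for every rational $v$, restricts $\eps$ to the co-countable set $\Xi_u$ on which $\tau_{u,v}\ne\eps$ a.s.\ for all rational $v$, and then, for each $\omega$ in an explicit full-measure set $\Omega'$, uses the monotonicity of $v\mapsto\tau_{u,v}(\omega)$ to upgrade convergence at rational $v$ to $m_{n'}(u,t,\omega)\to m(u,t,\omega)$ for all $t\in[\eps,T]$ outside a countable set $R(\omega)$. The domination for the reciprocals is obtained pathwise from $m_{n'}(u,\eps,\omega)\to m(u,\eps,\omega)>0$, yielding a.s.\ convergence in $C[0,T]$ of the integrals along $n'$.

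Your route is probabilistic rather than pathwise: you get $\E|m_n(u,s)-m(u,s)|\to 0$ for a.e.\ $s$ by a Fubini/no-atom argument, then control the reciprocals by truncating at level $\gamma$ and invoking the tail bound $\p\{m_n(u,\eps)<\gamma\}\le C\gamma$ together with $m(u,\eps)>0$ a.s. This is correct and yields convergence in probability in $C[0,T]$, which is all that is needed for the Jacod--Shiryaev local-martingale-limit step you quote. What you call the ``subsequence argument'' is in fact not needed: on $\{m_n(u,\eps)\ge\gamma\}\cap\{m(u,\eps)\ge\gamma\}$ one has $|1/m_n-1/m|\le\gamma^{-2}|m_n-m|$, so $\E\big[\I_{\{\cdots\}}\int_\eps^T|1/m_n-1/m|\,ds\big]\le\gamma^{-2}\int_\eps^T\E|m_n-m|\,ds\to 0$, while the complement has probability $\le C\gamma+\p\{m(u,\eps)<\gamma\}$, and this already gives the convergence in probability directly. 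Both approaches work; the paper's gives a slightly stronger (a.s.) statement at the cost of the bookkeeping with $\Xi_u$, $R(\omega)$ and the rational skeleton, whereas yours is shorter once the truncation step is written out in full.
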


\begin{proof}
Let $u\in[0,1]$ be fixed and let $\tau_{u,v}^n$ and $\tau_{u,v}$ be defined by~\eqref{f_time_of_meeting_n} and~\eqref{f_time_of_meeting} respectively. Denote
$$
\Xi_u=\left\{\eps\in(0,T):\ \p\{\tau_{u,v}\neq\eps,\ \mbox{for all}\ v\in\Q\cap[0,1]\}=1\right\}.
$$
Note that $(0,T)\setminus\Xi_u$ is countable because it is a subset of the countable set $\left\{\eps\in(0,T):\ \exists\ v\in\Q\cap[0,1]\ \  \p\{\tau_{u,v}=\eps\}>0\right\}.$ By the definition of $m(u,t)$ it is clear that
\begin{equation}\label{f_m_int}
m(u,t)=\int_0^1\I_{\{\tau_{u,v}\leq t\}}dv.
\end{equation}
Similarly,
\begin{equation}\label{f_m_n_int}
m_n(u,t)=\int_0^1\I_{\{\tau_{u,v}^n\leq t\}}dv,
\end{equation}
where $m_n(u,t)$ is defined by~\eqref{f_m_n}. Using Lemma~\ref{lemma_conv_of_moment_of_meeting} and Lemma~4.2~\cite{Kallenberg:2002}, we choose a sequence $\{n'\}$ such that for each $v\in\Q\cap[0,1]$
$$
\tau_{u,v}^{n'}\to\tau_{u,v}\quad\mbox{a.s.},\ \ n'\to\infty.
$$
Fix $\eps\in\Xi_u$ and set
\begin{align*}
\Omega'&=\{\forall v\in\Q\cap[0,1]\ \ \tau_{u,v}^{n'}\to\tau_{u,v}\}\cap\{\forall v\in\Q\cap[0,1]\ \ \tau_{u,v}\neq\eps\}\\
&\cap\{\tau_{u,\cdot},\tau_{u,\cdot}^{n'}\ \mbox{is non-decreasing on}\ [u,1]\\
&\quad\quad\quad\quad\quad\quad\quad\quad\mbox{and non-increasing on}\ [0,u],\ \mbox{for all}\ n'\}\\
&\cap\{y_{n'},y\in D([0,1],C[0,T]),\ \mbox{for all}\ n'\}\cap\{N(\eps)<\infty\}.
\end{align*}
It is clear that $\p\{\Omega'\}=1$. For every $\omega\in\Omega'$ set
$$
R(\omega)=\{\tau_{u,v}(\omega):\ v\in\Q\cap[0,1]\}\cap[\eps,T].
$$
It should be noted that $\eps\not\in R(\omega)$ and $R(\omega)$ is countable. Take $\omega\in\Omega'$ and $t\in[\eps,T]\setminus R(\omega)$ and show that
\begin{equation}\label{f_conv_m}
m_{n'}(u,t,\omega)\to m(u,t,\omega),\quad n'\to\infty.
\end{equation}
To prove it, observe that, since $t\not\in R(\omega)$, for all $v\in\Q\cap[0,1]$
$$
\I_{\{\tau_{u,v}^{n'}(\omega)\leq t\}}\to\I_{\{\tau_{u,v}(\omega)\leq t\}},\quad n'\to\infty.
$$
Next, denote
\begin{align*}
v^+(\omega)&=\sup\{v:\ \tau_{u,v}(\omega)<t\},\\
v^-(\omega)&=\inf\{v:\ \tau_{u,v}(\omega)<t\}.
\end{align*}
and take $v\in(v^-(\omega),v^+(\omega))$. Then $\tau_{u,v}(\omega)<t$. Let $\widetilde{v}$ be a rational point from $(v^-(\omega),v^+(\omega))$ such that $\widetilde{v}>v>u$ or $\widetilde{v}<v<u$. Then $\tau_{u,\widetilde{v}}(\omega)<t$. Since
\begin{align*}
\tau_{u,\widetilde{v}}^{n'}(\omega)&\to\tau_{u,\widetilde{v}}(\omega)\neq t,\\
\I_{\{\tau_{u,v}^{n'}(\omega)\leq t\}}\geq\I_{\{\tau_{u,\widetilde{v}}^{n'}(\omega)\leq t\}}&\to\I_{\{\tau_{u,\widetilde{v}}(\omega)\leq t\}}= \I_{\{\tau_{u,v}(\omega)\leq t\}}=1.
\end{align*}
So, we obtain that for all $v\in(v^-(\omega),v^+(\omega))$
\begin{equation}\label{f_conv_ind}
\I_{\{\tau_{u,v}^{n'}(\omega)\leq t\}}\to\I_{\{\tau_{u,v}(\omega)\leq t\}},\quad n'\to\infty.
\end{equation}
Now consider $v\in[0,1]\setminus[v^-(\omega),v^+(\omega)]$. Note that, by the choice of the set $R(\omega)$ there is no interval $[a,b]\subset[0,1]$ $(a<b)$, such that $\tau_{u,v}(\omega)=t$, $v\in[a,b]$, because the interval contains rational points and we know that for these points the equality is false. So, by the monotonicity of $\tau_{u,\cdot}(\omega)$, we have that
$$
\tau_{u,v}(\omega)>t.
$$
Again, we can show that~\eqref{f_conv_ind} holds for all $v\in[0,1]\setminus[v^-(\omega),v^+(\omega)]$. Thus, by the dominated convergence theorem, \eqref{f_m_int} and~\eqref{f_m_n_int}, we can conclude that~\eqref{f_conv_m} is valid.

Next, note that $\eps\not\in R(\omega)$ so
$$
m_{n'}(u,\eps,\omega)\to m(u,\eps,\omega),\quad n'\to\infty.
$$
Since $y(\cdot,\eps,\omega)$ belongs to $D([0,1],\R)$ and it is a step function (see Corollary~\ref{coroll_step_function}), $m(u,\eps,\omega)>0$.
So,
$$
\frac{1}{m_{n'}(u,\eps,\omega)}\to\frac{1}{m(u,\eps,\omega)}<\infty,\quad n'\to\infty.
$$
Hence there exists a constant $C(u,\eps,\omega)$ such that
$$
\frac{1}{m_{n'}(u,t,\omega)}\leq\frac{1}{m_{n'}(u,\eps,\omega)}\leq C(u,\eps,\omega),\quad t\in[\eps,T]\setminus R(\omega),\ \ n'\geq 1.
$$
{\col Noting that the Lebesgue measure} of $R(\omega)$ equals 0 and using the dominated convergence theorem, we get
\begin{align*}
\sup\limits_{t\in[\eps,T]}&\left|\int_{\eps}^t\frac{ds}{m_{n'}(u,s,\omega)}-\int_{\eps}^t\frac{ds}{m(u,s,\omega)}\right|\\
&\leq\int_{\eps}^T\left|\frac{1}{m_{n'}(u,s,\omega)}-\frac{1}{m(u,s,\omega)}\right|ds\to 0,\quad n'\to\infty.
\end{align*}
Thus
$$
\langle y_{n'}(u,\cdot\vee\eps)\rangle_{\cdot}=\int_{\eps}^{\cdot\vee\eps}\frac{ds}{m_{n'}(u,s,\omega)}\to \int_{\eps}^{\cdot\vee\eps}\frac{ds}{m(u,s,\omega)}\quad\mbox{a.s.}\ \ \mbox{in}\ C[0,T].
$$
Since
$$
y_{n'}(u,\cdot\vee\eps)\to y(u,\cdot\vee\eps)\quad\mbox{a.s.}\ \ \mbox{in}\ C[0,T],
$$
it is easy to see that
$$
\langle y(u,\cdot\vee\eps)\rangle_t=\int_{\eps}^{t\vee\eps}\frac{ds}{m(u,s,\omega)},\quad t\in[0,T].
$$
But on the other hand,
$$
\langle y(u,\cdot\vee\eps)\rangle_t=\langle y(u,\cdot)\rangle_{t\vee\eps}-\langle y(u,\cdot)\rangle_{\eps}
$$
so
$$
\int_{\eps}^{t\vee\eps}\frac{ds}{m(u,s,\omega)}=\langle y(u,\cdot)\rangle_{t\vee\eps}-\langle y(u,\cdot)\rangle_{\eps}, \quad t\in[0,T].
$$
{\cob Since $\langle y(u,\cdot)\rangle_{\cdot}$ is the quadratic variation of a continuous martingale, it is continuous and nondecreasing. Consequently, passing to the limit as $\Xi_u\ni\eps\to 0$, we obtain the proof of the lemma.}
\end{proof}

\subsection{Verification of $(C1)$}\label{section_C1}

In this section we will show that $\{y(u,t),\\ u\in[0,1],\ t\in[0,T]\}$ satisfies Condition $(C1)$, i.e we will prove that $y(u,t)$ has {\col a second moment} for all $u,t$. Since $y(u,\cdot)$ is a local square integrable martingale, it is enough to establish $\E\langle y(u,\cdot)\rangle_t<\infty$. To check this we will prove an estimation of $\p\left\{\frac{1}{m(u,t)}>r\right\}$ and using it we will estimate $\E\frac{1}{m(u,t)}$. The following lemma holds.

\begin{lem}\label{lemma_estim_exspect_m}
For each $\beta\in\left(0,\frac{3}{2}\right)$ the exists a constant $C$ such that for all $u\in[0,1]$
$$
\E\frac{1}{m^{\beta}(u,t)}\leq\frac{C}{\sqrt{t}},\quad t\in(0,T].
$$
\end{lem}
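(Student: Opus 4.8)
The plan is to derive the estimate from the single tail bound
\[
\p\{m(u,t)<\gamma\}\le\sqrt{\tfrac{2}{\pi}}\,\frac{\gamma^{3/2}}{\sqrt t},\qquad u\in[0,1],\ t\in(0,T],\ \gamma\in\left(0,\tfrac12\right],
\]
and then integrate it. Note that the cruder bound $\p\{m(u,t)<\gamma\}\le C\gamma/\sqrt t$ — essentially the one obtained inside the proof of Lemma~\ref{lemma_tightness_in_C} — only yields $\E m^{-\beta}(u,t)<\infty$ for $\beta<1$; the extra half power of $\gamma$ above is exactly what raises the integrability threshold to $\beta<\tfrac32$. (Recall $m(u,t)>0$ a.s. for $t>0$ by Corollary~\ref{coroll_step_function}, so $m^{-\beta}(u,t)$ makes sense.)

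To prove the tail bound, fix $u$ and $\gamma\le\tfrac12$. By left--right symmetry we may assume $u+\gamma\le 1$ (otherwise use the pair $(u-\gamma,u)$ instead of $(u,u+\gamma)$ below). Set $\xi(s)=y(u+\gamma,s)-y(u,s)$, a nonnegative continuous local martingale with $\xi(0)=\gamma$. The key point is that the event $\{m(u,t)<\gamma\}$ forces two things at once. First, $\tau_{u,u+\gamma}>t$: if $u$ and $u+\gamma$ had already met by time $t$, then by the monotonicity $(C3)$ the whole interval $[u,u+\gamma]$ would have coalesced to $y(u,\cdot)$ by that time, giving $m(u,t)\ge\gamma$; in particular $\xi(s)>0$ for every $s\le t$. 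Second, since $m(u,\cdot)$ is nondecreasing, $m(u,s)\le m(u,t)<\gamma$ for all $s\le t$, so using $(C4)$, $(C5)$ and $t<\tau_{u,u+\gamma}$ (the latter making the cross bracket vanish),
\[
\langle\xi\rangle_t=\langle y(u,\cdot)\rangle_t+\langle y(u+\gamma,\cdot)\rangle_t\ge\langle y(u,\cdot)\rangle_t=\int_0^t\frac{ds}{m(u,s)}\ge\frac{t}{\gamma}.
\]

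Now I would apply the Dambis--Dubins--Schwarz time change (Theorem~2.7.2'~\cite{Watanabe:1981:en}, exactly as in Lemma~\ref{lemma_enequality_for_stop_time}): on a possibly enlarged probability space there is a Wiener process $\widetilde w$ with $\xi(s)=\gamma+\widetilde w(\langle\xi\rangle_s)$, $s\in[0,T]$. Since $s\mapsto\langle\xi\rangle_s$ maps $[0,t]$ onto $[0,\langle\xi\rangle_t]$, the event $\{\xi(s)>0\text{ for all }s\le t\}$ equals $\{\widetilde w(v)>-\gamma\text{ for all }v\in[0,\langle\xi\rangle_t]\}$, which, intersected with $\{\langle\xi\rangle_t\ge t/\gamma\}$, is contained in $\{\widetilde w(v)>-\gamma\text{ for all }v\in[0,t/\gamma]\}$. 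Hence, by the two observations above and the reflection principle,
\[
\p\{m(u,t)<\gamma\}\le\p\left\{\inf_{v\in[0,t/\gamma]}\widetilde w(v)>-\gamma\right\}=\p\{|\widetilde w(t/\gamma)|<\gamma\}\le\frac{2\gamma}{\sqrt{2\pi t/\gamma}}=\sqrt{\tfrac{2}{\pi}}\,\frac{\gamma^{3/2}}{\sqrt t}.
\]
For the integration, write $\E m^{-\beta}(u,t)=\int_0^\infty\p\{m^{-\beta}(u,t)>r\}\,dr$, bound the integrand by $1$ on $(0,2^\beta)$ and, for $r>2^\beta$ (so that $\gamma:=r^{-1/\beta}<\tfrac12$), by $\sqrt{2/\pi}\,r^{-3/(2\beta)}/\sqrt t$ via the tail bound. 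Since $\beta<\tfrac32$ gives $3/(2\beta)>1$, the integral $\int_{2^\beta}^\infty r^{-3/(2\beta)}\,dr$ converges, and collecting the two contributions (and using $t\le T$ to turn the constant from $(0,2^\beta)$ into a term of order $1/\sqrt t$) yields $\E m^{-\beta}(u,t)\le C(\beta,T)/\sqrt t$.

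The step carrying the real content is the twofold use of the event $\{m(u,t)<\gamma\}$: on it $\xi$ stays strictly positive on $[0,t]$ and, simultaneously, its bracket has grown by at least $t/\gamma$ there — a ``small mass $\Rightarrow$ fast diffusion'' self-improvement of the trivial bound $\langle\xi\rangle_t\ge 2t$ that was enough in Lemma~\ref{lemma_tightness_in_C} but is not enough here. Once both facts are in hand, feeding them through the time change and the reflection principle, and then carrying out the elementary integration, are routine.
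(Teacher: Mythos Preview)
Your proof is correct and follows essentially the same route as the paper's: with $\gamma=1/r$, your $\xi$ is the paper's $M$, your observation that on $\{m(u,t)<\gamma\}$ both $\xi>0$ on $[0,t]$ and $\langle\xi\rangle_t\ge t/\gamma$ is exactly the paper's ``$\tau>t$ and $\langle M\rangle_t\ge rt$'' step, and the DDS time change plus reflection principle yielding the $\gamma^{3/2}/\sqrt t$ tail is identical to the paper's bound $\p\{A_t\}\le\frac{2}{\sqrt{2\pi}}\frac{1}{\sqrt{tr^3}}$. The only cosmetic differences are the parametrisation ($\gamma$ versus $r=1/\gamma$) and that you phrase the Brownian estimate via $\inf\widetilde w>-\gamma$ while the paper uses the equivalent hitting-time formulation $\sigma\ge rt$.
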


\begin{proof}
Let us estimate the probability
$$
\p\left\{\frac{1}{m(u,t)}>r\right\},\quad r\geq 2,\ \ t\in(0,T].
$$
We can assume, without any restriction of generality, that $u\in\left[0,\frac{1}{2}\right]$. Denote
$$
M(t)=y\left(u+\frac{1}{r},t\right)-y(u,t)
$$
and
$$
A_t=\left\{\frac{1}{m(u,t)}>r\right\}.
$$
Since $M(\cdot)$ is a continuous martingale (see Proposition~\ref{prop_prolong}), $M(\cdot)$ is a continuous local square integrable martingale. {\col To calculate the quadratic variation of $M(\cdot)$ we have}
$$
\langle M(\cdot)\rangle_t=\left\langle y\left(u+\frac{1}{r},\cdot\right)\right\rangle_t+\langle y(u,\cdot)\rangle_t- 2\left\langle y\left(u+\frac{1}{r},\cdot\right),y(u,\cdot)\right\rangle_t.
$$
Note that by Lemma~\ref{lemma_cond_5},
$$
\left\langle y\left(u+\frac{1}{r},\cdot\right),y(u,\cdot)\right\rangle_t\I_{\{M(t)>0\}}=0,\quad t\in[0,T].
$$
Take $\omega\in A_t$ then $\omega\in\{M(t)>0\}$ because $y\left(u+\frac{1}{r},\cdot,\omega\right)$ and $y(u,\cdot,\omega)$ do not meet by time $t$. Hence
\begin{align*}
\langle M(\cdot)\rangle_t(\omega)&=\left\langle y\left(u+\frac{1}{r},\cdot\right)\right\rangle_t(\omega)+\langle y(u,\cdot)\rangle_t(\omega)\\
&\geq\langle y(u,\cdot)\rangle_t(\omega)=\int_0^t\frac{ds}{m(u,s,\omega)}\geq rt.
\end{align*}
Next, since $M(\cdot)$ {\col is a continuous martingale,} there exists a Wiener process $w(t),\ t\geq 0$, such that
$$
M(t)=\frac{1}{r}+w\left(\langle M(\cdot)\rangle_t\right).
$$
Set
$$
\tau=\inf\{t:\ M(t)=0\}\wedge T,\quad \sigma=\inf\{t:\ \frac{1}{r}+w(t)=0\}.
$$
As in the proof of Lemma~\ref{lemma_enequality_for_stop_time} we can prove that
$$
\langle M(\cdot)\rangle_{\tau}\leq\sigma.
$$
Note that if $\omega\in A_t$ then $\tau(\omega)>t$ and hence, by the last inequality $\sigma(\omega)\geq\langle M(\cdot)\rangle_{\tau(\omega)}(\omega)\geq \langle M(\cdot)\rangle_{t}(\omega)\geq rt$.
Now we are ready to estimate the probability of $A_t$. So, {\cob
\begin{align*}
\p\{A_t\}&=\p\{A_t,\ M(t)>0\}=\p\{A_t,\ \tau>t\}\leq\p\{A_t,\ \sigma\geq rt\}\\
&\leq\p\{\sigma\geq rt\}=\p\left\{\max\limits_{s\in[0,rt]}w(s)<\frac{1}{r}\right\}\leq \p\left\{\max\limits_{s\in[0,1]}w(s)<\frac{1}{\sqrt{tr^3}}\right\}\\
&\leq\frac{2}{\sqrt{2\pi}}\int_0^{\frac{1}{\sqrt{tr^3}}}e^{-\frac{x^2}{2}}dx\leq\frac{2}{\sqrt{2\pi}}\frac{1}{\sqrt{tr^3}}.
\end{align*}
}
Thus,
{\cob
\begin{align*}
\E\frac{1}{m^{\beta}(u,t)}&=\E\int_0^{\infty}\I_{\left\{\frac{1}{m^{\beta}(u,t)}>r\right\}}dr= \int_0^{\infty}\p\left\{\frac{1}{m(u,t)}>r^{\frac{1}{\beta}}\right\}dr\\
&\leq 2+\int_2^{\infty}\frac{2}{\sqrt{2\pi}}\frac{1}{\sqrt{t}r^{\frac{3}{2\beta}}}dr\leq\frac{C}{\sqrt{t}}.
\end{align*}
}
The lemma is proved.
\end{proof}

\begin{lem}\label{lemma_estim_expect_char}
There exists a constant $C$ such that for all $u\in[0,1]$
$$
\E\int_0^t\frac{ds}{m(u,s)}\leq C\sqrt{t},\quad t\in[0,T].
$$
\end{lem}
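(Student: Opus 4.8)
The plan is to deduce the estimate from Lemma~\ref{lemma_estim_exspect_m} simply by interchanging the expectation with the time integral. First I would observe that the integrand $(s,\omega)\mapsto \frac{1}{m(u,s,\omega)}$ is nonnegative and, for every $s\in(0,T]$, almost surely finite: by Corollary~\ref{coroll_step_function} the map $v\mapsto y(v,s)$ is a step function, so by right continuity $y(v,s)=y(u,s)$ for all $v$ in some nondegenerate interval with left endpoint $u$ (or right endpoint $u$ when $u=1$), and hence $m(u,s)>0$ a.s. Moreover $s\mapsto m(u,s)$ is non-decreasing, and for each fixed $s$ it is $\F_s$-measurable (recall $m(u,s)=\int_0^1\I_{\{\tau_{u,v}\le s\}}\,dv$); a non-decreasing process that is measurable in $\omega$ for each fixed time is jointly measurable in $(s,\omega)$, so Tonelli's theorem applies and gives
$$
\E\int_0^t\frac{ds}{m(u,s)}=\int_0^t\E\frac{1}{m(u,s)}\,ds .
$$

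Next I would apply Lemma~\ref{lemma_estim_exspect_m} with $\beta=1$, which is admissible since $1\in\left(0,\frac32\right)$. This furnishes a constant $C$, independent of $u$, such that $\E\frac{1}{m(u,s)}\le\frac{C}{\sqrt s}$ for all $s\in(0,T]$. Substituting this into the identity above and evaluating the elementary integral $\int_0^t\frac{ds}{\sqrt s}=2\sqrt t$ yields
$$
\E\int_0^t\frac{ds}{m(u,s)}\le C\int_0^t\frac{ds}{\sqrt s}=2C\sqrt t ,
$$
and, after renaming $2C$ as $C$, this is exactly the asserted inequality for all $t\in[0,T]$ (the case $t=0$ being trivial since both sides vanish).

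The argument is short and I do not anticipate any genuine obstacle: the only points that need a word of justification are the almost sure positivity of $m(u,s)$ for $s>0$ and the joint measurability of $1/m(u,\cdot)$ required to legitimately invoke Tonelli, both of which reduce to facts already established (Corollary~\ref{coroll_step_function} together with the monotonicity and $\F_s$-measurability of $m(u,\cdot)$). All the real content of the estimate is contained in Lemma~\ref{lemma_estim_exspect_m}.
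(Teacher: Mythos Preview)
Your proposal is correct and matches the paper's approach exactly: the paper simply writes that the assertion ``immediately follows from Lemma~\ref{lemma_estim_exspect_m}'', and your argument (Tonelli plus the $\beta=1$ case of that lemma and $\int_0^t s^{-1/2}\,ds=2\sqrt t$) is precisely the intended one-line deduction, with the measurability and positivity remarks being harmless added detail.
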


\begin{proof}
The assertion of the lemma immediately follows from Lemma~\ref{lemma_estim_exspect_m}.
\end{proof}

This lemma {\col allows us to obtain} the following result.

\begin{lem}\label{lemma_expect_of_y2}
There exists a constant $C$ such that for all $u\in[0,1]$
$$
\E (y(u,t)-u)^2\leq C\sqrt{t},\quad t\in[0,T].
$$
Moreover, $y(u,\cdot)$ is a continuous square integrable martingale.
\end{lem}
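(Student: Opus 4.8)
The plan is to combine the identity for the quadratic variation obtained in Lemma~\ref{lemma_cond_4} with the moment bound of Lemma~\ref{lemma_estim_expect_char}, and then to upgrade $y(u,\cdot)$ from a continuous local (square integrable) martingale to a genuine square integrable martingale by a standard localization argument.

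First I would fix $u\in[0,1]$ and set $M(t)=y(u,t)-u$, $t\in[0,T]$. By Proposition~\ref{prop_prolong}, $M(\cdot)$ is a continuous $(\F_t)$-local martingale with $M(0)=0$; being continuous it is automatically locally square integrable, so its quadratic variation $\langle M(\cdot)\rangle=\langle y(u,\cdot)\rangle$ is well defined and, by Lemma~\ref{lemma_cond_4}, equals $\int_0^{\cdot}\frac{ds}{m(u,s)}$. Let $\tau_k=\inf\{t:\ |M(t)|\geq k\}\wedge T$. Then $M(\cdot\wedge\tau_k)$ is a bounded, hence square integrable, continuous martingale, so
$$
\E (M(t\wedge\tau_k))^2=\E\langle M(\cdot)\rangle_{t\wedge\tau_k}\leq\E\langle M(\cdot)\rangle_t=\E\int_0^t\frac{ds}{m(u,s)}\leq C\sqrt{t},
$$
where the last inequality is Lemma~\ref{lemma_estim_expect_char} and the constant $C$ is independent of $u$ and $t$.

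Next I would let $k\to\infty$. Since $\tau_k\uparrow T$ a.s. and $M$ is continuous, $M(t\wedge\tau_k)\to M(t)$ a.s.; the uniform $L^2$-bound just obtained makes the family $\{M(t\wedge\tau_k)\}_{k\geq 1}$ uniformly integrable, so the convergence also holds in $L^1$, and Fatou's lemma yields $\E (M(t))^2\leq C\sqrt{t}$, which is exactly the asserted estimate (the case $t=0$ being trivial). Finally, for the martingale property I would fix $s<t$ and pass to the limit in the identity $\E(M(t\wedge\tau_k)\mid\F_s)=M(s\wedge\tau_k)$, using the $L^1$-convergence on both sides, to get $\E(M(t)\mid\F_s)=M(s)$. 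Hence $M(\cdot)$, and therefore $y(u,\cdot)$, is a continuous martingale, and it is square integrable by the moment bound.

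There is no serious obstacle here: the nontrivial input, namely the bound $\E\int_0^t\frac{ds}{m(u,s)}\leq C\sqrt{t}$, has already been established in Lemma~\ref{lemma_estim_expect_char}, and the only point requiring a little care is the passage from a continuous local martingale to a true square integrable martingale, which is handled by the uniform integrability coming from the uniform $L^2$-bound $\E(M(t\wedge\tau_k))^2\leq C\sqrt{t}$.
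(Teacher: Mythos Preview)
Your argument is correct and is exactly the standard way to fill in the details that the paper omits (the paper simply writes ``This lemma allows us to obtain the following result'' after Lemma~\ref{lemma_estim_expect_char} and gives no further proof). One minor simplification: by Proposition~\ref{prop_prolong} (or, in the generality of Remark~\ref{remark_prop_y}, by the standing hypothesis $(C1')$), $y(u,\cdot)$ is already known to be a genuine continuous $(\F_t)$-martingale, so you do not need to re-derive the martingale property via the $L^1$-limit of $\E(M(t\wedge\tau_k)\mid\F_s)=M(s\wedge\tau_k)$; the only thing left to show is square integrability and the moment bound, and for that your localization plus Fatou (or monotone convergence for $\langle M\rangle_{t\wedge\tau_k}\uparrow\langle M\rangle_t$) suffices.
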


\begin{rmk}\label{remark_prop_y}
Since we did not use the fact that $\{y(u,t),\ u\in[0,1],\ t\in[0,T]\}$ was a limit of a finite system of particles, we claim that {\col all the results of this subsection are valid for every random element} $\{y(u,t),\ u\in[0,1],\ t\in[0,T]\}$ of $D([0,1],C[0,T])$ which satisfies conditions $(C2)$-$(C5)$ and
\begin{enumerate}
\item[(C1')] for all $u\in[0,1]$ the process $y(u,\cdot)$ is a continuous martingale with respect to the filtration
$$
\mathcal{F}_t=\sigma(y(u,s),\ u\in[0,1],\ s\leq t),\quad t\in[0,T].
$$
\end{enumerate}

\end{rmk}

\section{Stochastic integral with respect to the constructed flow and an analog of Ito's formula}\label{section_Ito_formula}

Hereafter we will suppose that $\{y(u,t),\ u\in[0,1],\ t\in[0,T]\}$ is a random element of $D([0,1],C[0,T])$ which satisfies conditions $(C1)$--$(C5)$. For such a flow we will construct a stochastic integral
$
\int_0^1\int_0^t\varphi(y(u,s))dy(u,s)du
$
and using the constructed integral we obtain an analog of Ito's formula. First let us establish a property of $\{y(u,t),\ u\in[0,1],\ t\in[0,T]\}$.

\begin{lem}\label{lemma_step_funct}
For all $t\in(0,T]$ the function $y(u,t),\ u\in[0,1]$, is a step function in $D([0,1],\R)$. Moreover $\p\{\mbox{for all}\ u,v\in[0,1],\ \mbox{if}\ y(u,t)=y(v,t)\ \mbox{then}\ y(u,t+\cdot)=y(v,t+\cdot)\}=1$.
\end{lem}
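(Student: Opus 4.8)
The plan is to prove the second (``coalescing'') assertion first, and then to deduce that $y(\cdot,t)$ is a step function from it together with the moment estimate of Section~\ref{section_C1}, which by Remark~\ref{remark_prop_y} applies to any process satisfying $(C1)$--$(C5)$. Throughout $t\in(0,T]$ is fixed.

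\emph{Coalescing.} Fix a pair $u\le v$ in $[0,1]$. By $(C1)$ both $y(u,\cdot)$ and $y(v,\cdot)$ are continuous martingales, and by $(C3)$ the process $D:=y(v,\cdot)-y(u,\cdot)$ is nonnegative, so $D$ is a nonnegative continuous martingale. Such a martingale is absorbed at $0$: if $\sigma=\inf\{s:D(s)=0\}\wedge T$, then for each $s$ the optional sampling theorem gives $\E[D(s)\I_{\{\sigma\le s\}}]=\E[D(\sigma\wedge s)\I_{\{\sigma\le s\}}]=\E[D(\sigma)\I_{\{\sigma\le s\}}]=0$, and nonnegativity of $D$ forces $D(s)=0$ on $\{\sigma\le s\}$; letting $s$ range over the rationals and using continuity of $D$ we obtain that a.s. $D(s)=0$ for all $s\ge\sigma$. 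Hence, for each fixed pair $(u,v)$, a.s.\ the relation $y(u,\tau)=y(v,\tau)$ for some $\tau$ implies $y(u,\tau+\cdot)=y(v,\tau+\cdot)$. Intersecting these full-probability events over all rational pairs $u\le v$, together with $\{y\in D([0,1],C[0,T])\}$ and the event that $(C3)$ holds for all $u<v$ and all $s$, yields one event $\Omega_0$ of full probability on which coalescing holds for all rational pairs.

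\emph{From rational to arbitrary pairs.} On $\Omega_0$, suppose $u<v$ and $y(u,t)=y(v,t)=:c$. By $(C3)$, $y(\cdot,t)\equiv c$ on $[u,v]$, so all rationals $q\in(u,v)$ satisfy $y(q,t)=c$ and, by the rational coalescing, their trajectories all coincide with a common $g(\cdot)$ on $[t,T]$. Right continuity of $u\mapsto y(u,\cdot)$ gives $y(u,t+\cdot)=\lim_{r\downarrow u}y(r,t+\cdot)=g(t+\cdot)$, and the existence of left limits gives $y(v-,t+\cdot)=g(t+\cdot)$. It remains to see $y(v,t+\cdot)=g(t+\cdot)$; since for a rational $q\in(u,v)$ the process $y(v,\cdot)-y(q,\cdot)$ is again a nonnegative continuous martingale (here one uses that $(C1)$ holds for \emph{every} index $v\in[0,1]$) which vanishes at the instant $t$ on the event under consideration, the absorption property makes it vanish at all later times. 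The step I expect to be the main obstacle is making this last point valid for \emph{all} $v$ simultaneously: the pair $(q,v)$ is not rational, so its coalescing is only an a.s.\ statement for each individual $v$. This is handled by observing that the only $v$ for which the argument is not already covered by the squeeze from the right are the right endpoints of the (finitely or countably many, random) level sets of $y(\cdot,t)$, which are $\F_t$-measurable; for such an $\F_t$-measurable endpoint $V$ the process $y(V,t+\cdot)-y(q,t+\cdot)$ is still a nonnegative continuous martingale (its martingale property follows by approximating $V$ from above by simple $\F_t$-measurable random variables, using right continuity in the space variable and the uniform $L^2$-bound of Lemma~\ref{lemma_expect_of_y2}), and absorption applies to it.

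\emph{Step function.} Granted coalescing, the set $\{v:\exists\,s\le t,\ y(v,s)=y(u,s)\}$ coincides with the level set $\{v:y(v,t)=y(u,t)\}$: the inclusion ``$\supseteq$'' is immediate, and ``$\subseteq$'' holds because if $y(v,\cdot)$ met $y(u,\cdot)$ at some $s\le t$ then by coalescing they are still together at $t$. Thus $m(u,t)=\Leb\{v:y(v,t)=y(u,t)\}$. Writing $[0,1]$ as the disjoint union of the level sets of $y(\cdot,t)$, which by $(C3)$ are intervals, say $J_1,J_2,\dots$ of lengths $\ell_1,\ell_2,\dots$, we get
$$
N(t)=\sum_j 1=\sum_j\int_{J_j}\frac{du}{\ell_j}=\int_0^1\frac{du}{m(u,t)}.
$$
By Lemma~\ref{lemma_estim_exspect_m} with $\beta=1$ and Tonelli's theorem, $\E N(t)=\int_0^1\E\frac{1}{m(u,t)}\,du\le\frac{C}{\sqrt t}<\infty$, so $N(t)<\infty$ a.s.; a right continuous nondecreasing function on $[0,1]$ taking finitely many values is a step function, which is the first assertion.
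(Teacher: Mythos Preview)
Your argument is correct and follows the standard route the paper merely cites (nonnegative-martingale absorption for the coalescing claim, a moment bound on $N(t)$ for the step-function claim); since the paper gives no details beyond the reference, your write-up is in fact more complete.

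On the step you flag as the obstacle: the idea is right, but the quantifiers must be arranged so that the countable family of almost-sure events is fixed \emph{before} you range over $(u,v)$. For each rational $q$ set $V_q=\sup\{w\ge q:y(w,t)=y(q,t)\}$, an $\F_t$-measurable variable with $V_q\ge q$; your approximation argument (with domination by $|y(0,\cdot)|+|y(1,\cdot)|\in L^2$, cf.\ Lemma~\ref{lemma_expect_of_y2}) shows that $s\mapsto y(V_q,t+s)-y(q,t+s)$ is a nonnegative continuous $(\F_{t+s})$-martingale, hence a.s.\ identically zero on the event $\{y(V_q,t)=y(q,t)\}$. Add these countably many events to $\Omega_0$. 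Then, pathwise, for $u<v$ with $y(u,t)=y(v,t)$ pick any rational $q\in(u,v)$: one has $V_q\ge v$, and either $V_q>v$, in which case a rational in $(v,V_q)$ lets you squeeze $v$ from the right, or $V_q=v$ and $y(V_q,t)=y(q,t)$, in which case the $(V_q,q)$ absorption gives $y(v,t+\cdot)=y(q,t+\cdot)$ directly. As written, your $V$ and $q$ appear to be chosen after fixing $\omega,u,v$, which would not produce a single exceptional null set.

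One minor point on the step-function part: singleton level sets would contribute to $N(t)$ but not to $\int_0^1 m(u,t)^{-1}\,du$, so the identity $N(t)=\int_0^1 m(u,t)^{-1}\,du$ needs the extra remark that once the integral is a.s.\ finite, the finitely many plateaus of the right-continuous nondecreasing function $y(\cdot,t)$ have total length $1$, and such a function then admits no other values.
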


Note that earlier this property followed from the fact that $\{y(u,t),\ u\in[0,1],\ t\in[0,T]\}$ was approximated by a finite particle system. But in fact this property follows from $(C1)$-$(C5)$.

\begin{proof}[Proof of Lemma~\ref{lemma_step_funct}]
This is the same proof as the one of a similar result for a coalescing Brownian motion (see e.g.~Section~7.1~\cite{Dorogovtsev:2007:en}).
\end{proof}

Let $\M$ denote the space of continuous square integrable {\col martingales on $[0,T]$ with respect to} the filtration defined by~\eqref{f_filtration_y} and let
$$
(M,N)=\E M(T)N(T),\quad M,N\in\M
$$
be an inner product on $\M$. It is well known that $\M$ is a Hilbert space.

Consider for $n\geq 1$ a partition $0=u_0^n<\ldots<u_n^n=1$. Let $v_k^n\in[u_{k-1}^n,u_k^n]$, $k\in[n]$, $\lambda_n=\max\limits_{k\in[n]}\Delta u_k^n$, where $\Delta u_k^n=u_k^n-u_{k-1}^n$, $k\in[n]$, and let $\varphi$ is a bounded piecewise continuous function from $\R$ to $\R$. Set
$$
M_n(t)=\sum_{k=1}^n\int_0^t\varphi(y(v_k^n,s))dy(v_k^n,s)\Delta u_k^n,\quad t\in[0,T].
$$
Note that $M_n(\cdot)$ belongs to $\M$. The following proposition holds.

\begin{prp}\label{prop_fundam}
The sequence $\{M_n(\cdot)\}_{n\geq 1}$ is convergent in $\M$ as $\lambda_n\to 0$ and its limit does not depend on the choice of the partition $u_k^n$, $k\in[n]$, and the points $v_k^n$, $k\in[n]$.
\end{prp}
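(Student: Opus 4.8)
The plan is to identify the limit explicitly: I will show that it is an $\M$-valued Bochner integral of a continuous curve, which yields convergence and independence of the partition simultaneously.

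For $v\in[0,1]$ set $Z^v(t)=\int_0^t\varphi(y(v,s))\,dy(v,s)$, $t\in[0,T]$. Since $y(v,\cdot)$ is a continuous square integrable martingale and $\varphi$ is bounded and Borel, this stochastic integral is well defined, and by $(C4)$ and Lemma~\ref{lemma_estim_expect_char}
$$
\E\langle Z^v\rangle_T=\E\int_0^T\frac{\varphi^2(y(v,s))}{m(v,s)}\,ds\le\|\varphi\|^2\,\E\int_0^T\frac{ds}{m(v,s)}\le C\|\varphi\|^2\sqrt T<\infty,
$$
so $Z^v\in\M$. Letting $\theta_n\colon[0,1]\to[0,1]$ be the step map that sends each $u\in[u_{k-1}^n,u_k^n)$ to the tag $v_k^n$ (and $1$ to $v_n^n$), one has $M_n=\int_0^1 Z^{\theta_n(u)}\,du$, the Bochner integral of a simple $\M$-valued function, and $|\theta_n(u)-u|\le\lambda_n$ for every $u$.

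The core of the argument is a Hölder estimate for $v\mapsto Z^v$. Fix $v<v'$ and put $\delta=v'-v$. By $(C5)$ the martingales $y(v,\cdot)$ and $y(v',\cdot)$ are orthogonal on $[0,\tau_{v,v'}]$, while by Lemma~\ref{lemma_step_funct} they coincide on $[\tau_{v,v'},T]$, so $Z^v-Z^{v'}$ is constant after $\tau_{v,v'}$; hence, using $(C4)$,
$$
\langle Z^v-Z^{v'}\rangle_T=\int_0^{\tau_{v,v'}\wedge T}\Bigl(\tfrac{\varphi^2(y(v,s))}{m(v,s)}+\tfrac{\varphi^2(y(v',s))}{m(v',s)}\Bigr)ds\le\|\varphi\|^2\int_0^{\tau_{v,v'}\wedge T}\Bigl(\tfrac{1}{m(v,s)}+\tfrac{1}{m(v',s)}\Bigr)ds.
$$
To estimate the expectation I will use: (a) the relative motion $y(v',\cdot)-y(v,\cdot)$ starts at $\delta$ and, by $(C5)$ and $m\le 1$, has quadratic variation $\ge 2t$ before $\tau_{v,v'}$, so comparison with a time-changed Brownian motion as in Lemma~\ref{lemma_enequality_for_stop_time} gives $\p\{\tau_{v,v'}>s\}\le\frac{\delta}{\sqrt{\pi s}}\wedge 1$; and (b) the bound $\E\,m(v,s)^{-q}\le C_q s^{-1/2}$ for fixed $q\in(1,\tfrac32)$ from Lemma~\ref{lemma_estim_exspect_m}. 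Applying Hölder's inequality with conjugate exponents $q$ and $p=q/(q-1)>3$ inside $\E\int_0^T\I_{\{\tau_{v,v'}>s\}}\,m(v,s)^{-1}\,ds=\int_0^T\E[\I_{\{\tau_{v,v'}>s\}}m(v,s)^{-1}]\,ds$, and splitting the $s$-integral at $s=\delta^2$, the part over $(0,\delta^2)$ is $O(\delta^{2-1/q})$ and the part over $(\delta^2,T)$ is $O(\delta^{1/p})$, since $\frac1{2p}+\frac1{2q}=\frac12$ makes $\int_{\delta^2}^Ts^{-1/2}\,ds\le 2\sqrt T$. Treating the $m(v',\cdot)^{-1}$ term the same way yields $\E\langle Z^v-Z^{v'}\rangle_T\le C|v-v'|^{\alpha}$ with $\alpha=1-1/q\in(0,\tfrac13)$; thus $\|Z^v-Z^{v'}\|_\M\le C|v-v'|^{\alpha/2}$ and $v\mapsto Z^v$ is continuous from $[0,1]$ into $\M$.

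A continuous map from the compact interval $[0,1]$ into the Banach space $\M$ is Bochner-integrable, so $M:=\int_0^1 Z^u\,du\in\M$ is well defined and depends on neither the partition nor the tags. Finally, by the contraction property of the Bochner integral and the Hölder bound,
$$
\|M_n-M\|_\M=\Bigl\|\int_0^1\bigl(Z^{\theta_n(u)}-Z^u\bigr)\,du\Bigr\|_\M\le\int_0^1\|Z^{\theta_n(u)}-Z^u\|_\M\,du\le C\int_0^1|\theta_n(u)-u|^{\alpha/2}\,du\le C\lambda_n^{\alpha/2}\to 0
$$
as $\lambda_n\to 0$, which is exactly the assertion. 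The main obstacle is the Hölder estimate of the third paragraph: one cannot bound $\langle Z^v-Z^{v'}\rangle_T$ crudely, because $\E\,m(v,s)^{-2}$ may be infinite, so the smallness of $\p\{\tau_{v,v'}>s\}$ as $v\to v'$ must be balanced against the singularity of $m(v,s)^{-1}$ near $s=0$ by using the sub-$\tfrac32$ moment bound of Lemma~\ref{lemma_estim_exspect_m}.
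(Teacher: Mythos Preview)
Your proof is correct and takes a genuinely different route from the paper. The paper argues directly that $\{M_n\}$ is Cauchy: it expands $(M_n,M_p)$ using $(C4)$, $(C5)$, and the coalescence property, observes that the Riemann sum $\sum_l\I_{\{\tau_{u,v_l^p}\le s\}}\Delta u_l^p$ approximates $m(u,s)=\int_0^1\I_{\{\tau_{u,v}\le s\}}dv$ with error at most $2\lambda_p$ (since $\I_{\{\tau_{u,\cdot}\le s\}}$ is an indicator of an interval), and then applies Lemma~\ref{lemma_estim_exspect_m} only with $\beta=1$ to control $\E\,m(u,s)^{-1}$. By contrast, you show that the curve $v\mapsto Z^v$ is H\"older continuous into $\M$ by combining the tail bound for $\tau_{v,v'}$ from Lemma~\ref{lemma_enequality_for_stop_time} with the $\beta>1$ moment bound of Lemma~\ref{lemma_estim_exspect_m} via H\"older's inequality, and then read off the limit as a Bochner integral. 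Your approach yields more: an explicit regularity statement for $v\mapsto Z^v$ and a rate $\|M_n-M\|_\M=O(\lambda_n^{\alpha/2})$, and it makes the independence from partition and tags immediate. The paper's approach is more elementary---no Bochner integration, no need for the full strength $\beta\in(1,\tfrac32)$ of Lemma~\ref{lemma_estim_exspect_m}, and no hitting-time estimate---and its one-line combinatorial bound $|\sum_l\I\Delta u_l^p-m|\le 2\lambda_p$ replaces your entire third paragraph.
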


\begin{proof}
To prove the proposition it is enough to check that $\{M_n(\cdot)\}_{n\geq 1}$ is {\col a Cauchy sequence}. So, consider
\begin{equation}\label{f_inner_prod}
\begin{split}
(M_n(\cdot)&,M_p(\cdot))=\E M_n(T)M_p(T)=\E\langle M_n(\cdot),M_p(\cdot)\rangle_T\\
&=\E\sum_{k=1}^n\sum_{l=1}^p\int_0^T\frac{\varphi(y(v_k^n,s))\varphi(y(v_l^p,s))}{\sqrt{m(v_k^n,s)m(v_l^p,s)}} \I_{\{\tau_{v_k^n,v_l^p}\leq s\}}\Delta u_k^n\Delta u_l^pds\\
&=\E\int_0^T\sum_{k=1}^n\varphi^2(y(v_k^n,s))\Delta u_k^n \frac{\sum_{l=1}^p\I_{\{\tau_{v_k^n,v_l^p}\leq s\}}\Delta u_l^p}{m(v_k^n,s)}ds
\end{split}
\end{equation}
\begin{align*}
&=\E\int_0^T\sum_{k=1}^n\varphi^2(y(v_k^n,s))\Delta u_k^nds\\
&+\E\int_0^T\sum_{k=1}^n\varphi^2(y(v_k^n,s))\Delta u_k^n\left[\frac{\sum_{l=1}^p\I_{\{\tau_{v_k^n,v_l^p}\leq s\}}\Delta u_l^p}{m(v_k^n,s)}-1\right]ds\\
&=I_{n,p}^1+I_{n,p}^2.
\end{align*}
By the dominated convergence theorem
$$
I_{n,p}^1\to\E\int_0^T\int_0^1\varphi^2(y(u,s))duds,\quad \lambda_n,\lambda_p\to 0.
$$
Estimate the second term of the right hand side of~\eqref{f_inner_prod}
\begin{align*}
|I_{n,p}^2|&\leq\E\int_0^T\sum_{k=1}^n\varphi^2(y(v_k^n,s))\Delta u_k^n\left|\frac{\sum_{l=1}^p\I_{\{\tau_{v_k^n,v_l^p}\leq s\}}\Delta u_l^p}{m(v_k^n,s)}-1\right|ds\\
&\leq\|\varphi^2\|\int_0^T\sum_{k=1}^n\Delta u_k^n\E\left|\frac{\sum_{l=1}^p\I_{\{\tau_{v_k^n,v_l^p}\leq s\}}\Delta u_l^p}{m(v_k^n,s)}-1\right|ds\\
&\leq\|\varphi^2\|\int_0^T\sup\limits_{u\in[0,1]}\E\frac{\left|\sum_{l=1}^p\I_{\{\tau_{u,v_l^p}\leq s\}}\Delta u_l^p-m(u,s)\right|}{m(u,s)}ds\\
&\leq\|\varphi^2\|\int_0^T\sup\limits_{u\in[0,1]}\E\frac{2\lambda_p}{m(u,s)}ds\leq 2\lambda_p\|\varphi^2\| C\sqrt{T}.
\end{align*}
Here the latter inequality follows from Lemma~\ref{lemma_estim_exspect_m}. Hence $I_{n,p}\to 0$ as $\lambda_n,\lambda_p\to 0$. {\col Thus  $\{M_n(\cdot)\}_{n\geq 1}$ is a Cauchy sequence} in $\M$ and therefore $\{M_n(\cdot)\}_{n\geq 1}$ is convergent. By the method of mixing of two sequences, it is easy to show that the limit does not depend on the choice of the permutation. The proposition is proved.
\end{proof}

Denote
$$
\int_0^1\int_0^{\cdot}\varphi(y(u,s))dy(u,s)du=\lim_{\lambda_n\to 0}M_n(\cdot)\quad\mbox{in}\ \M.
$$

\begin{prp}
For each bounded piecewise continuous function $\varphi$, $\int_0^1\int_0^{t}\varphi(y(u,s))dy(u,s)du$, $t\in[0,T]$, is a continuous square integrable $(\F_t)$-martingale with the quadratic variation
\begin{equation}\label{f_charact_of_int}
\left\langle\int_0^1\int_0^{\cdot}\varphi(y(u,s))dy(u,s)du\right\rangle_t= \int_0^1\int_0^t\varphi^2(y(u,s))dsdu,\quad t\in[0,T].
\end{equation}
\end{prp}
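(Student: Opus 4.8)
The plan is to exploit that $M(\cdot):=\int_0^1\int_0^{\cdot}\varphi(y(u,s))dy(u,s)du$ is, by construction, the limit in $\M$ of the approximating martingales $M_n(\cdot)$ of Proposition~\ref{prop_fundam}. Since $\M$ is a Hilbert space of continuous square integrable $(\F_t)$-martingales, the limit $M(\cdot)$ is again one of these (and $M(0)=0$, as every $M_n(0)=0$). Hence the only thing left is formula~\eqref{f_charact_of_int}, which I would obtain by identifying $\lim_{\lambda_n\to 0}\langle M_n(\cdot)\rangle_t$ in $L^1$ in two different ways.

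For the first way, I would compute $\langle M_n(\cdot)\rangle_t$ directly. Each $\int_0^{\cdot}\varphi(y(v_k^n,s))dy(v_k^n,s)$ is a continuous square integrable martingale (boundedness of $\varphi$ and $\E\langle y(v_k^n,\cdot)\rangle_T<\infty$ by Lemma~\ref{lemma_estim_expect_char}), so bilinearity of the joint quadratic variation together with $(C5)$, $(C4)$, and the coalescing property of Lemma~\ref{lemma_step_funct} --- after $\tau_{v_k^n,v_l^n}$ one has $y(v_k^n,\cdot)=y(v_l^n,\cdot)$, hence $m(v_k^n,\cdot)=m(v_l^n,\cdot)$ and $\varphi(y(v_k^n,\cdot))=\varphi(y(v_l^n,\cdot))$ there --- produces exactly the integrand of~\eqref{f_inner_prod} with $p=n$:
$$
\langle M_n(\cdot)\rangle_t=\int_0^t\sum_{k=1}^n\varphi^2(y(v_k^n,s))\Delta u_k^n\,\frac{\sum_{l=1}^n\I_{\{\tau_{v_k^n,v_l^n}\leq s\}}\Delta u_l^n}{m(v_k^n,s)}\,ds.
$$
Repeating the estimate from the proof of Proposition~\ref{prop_fundam} --- the inner Riemann sum differs from $m(v_k^n,s)=\int_0^1\I_{\{\tau_{v_k^n,v}\leq s\}}dv$ by at most $2\lambda_n$ by monotonicity of $v\mapsto\I_{\{\tau_{v_k^n,v}\leq s\}}$, and $\E\frac1{m(u,s)}\leq C/\sqrt{s}$ by Lemma~\ref{lemma_estim_exspect_m} (with $\beta=1$), which is integrable on $[0,T]$ --- I would rewrite this as
$$
\langle M_n(\cdot)\rangle_t=\int_0^t\sum_{k=1}^n\varphi^2(y(v_k^n,s))\Delta u_k^n\,ds+R_n(t),\qquad \E|R_n(T)|\to0 .
$$
For each $s>0$ the function $u\mapsto\varphi^2(y(u,s,\omega))$ is, a.s.\ in $\omega$, a step function with finitely many values since $y(\cdot,s,\omega)$ is a step function (Lemma~\ref{lemma_step_funct}); hence it is Riemann integrable, so $\sum_k\varphi^2(y(v_k^n,s,\omega))\Delta u_k^n\to\int_0^1\varphi^2(y(u,s,\omega))du$, and dominated convergence (in $s$, dominated by $\|\varphi^2\|$, then in $\omega$, dominated by $\|\varphi^2\|T$) gives $\int_0^t\sum_k\varphi^2(y(v_k^n,s))\Delta u_k^n\,ds\to\int_0^1\int_0^t\varphi^2(y(u,s))ds\,du$ in $L^1$. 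Thus $\langle M_n(\cdot)\rangle_t\to\int_0^1\int_0^t\varphi^2(y(u,s))ds\,du$ in $L^1$, for every $t$.

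For the second way, $M_n(\cdot)\to M(\cdot)$ in $\M$ forces $\langle M_n(\cdot)\rangle_t\to\langle M(\cdot)\rangle_t$ in $L^1$: indeed $\E\langle M_n-M\rangle_t\leq\E\langle M_n-M\rangle_T=\E(M_n(T)-M(T))^2\to0$, and by the Kunita--Watanabe and Cauchy--Schwarz inequalities $\E|\langle M_n\rangle_t-\langle M\rangle_t|=\E|\langle M_n-M,M_n+M\rangle_t|\leq(\E\langle M_n-M\rangle_T)^{1/2}(\E\langle M_n+M\rangle_T)^{1/2}\to0$, the last factor being bounded because $\sup_n\E M_n(T)^2<\infty$. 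By uniqueness of $L^1$-limits, $\langle M(\cdot)\rangle_t=\int_0^1\int_0^t\varphi^2(y(u,s))ds\,du$ a.s.\ for each $t$; since both sides are continuous in $t$, equality holds for all $t\in[0,T]$ a.s., which is~\eqref{f_charact_of_int}. The step I expect to be the crux is the identification of the $L^1$-limit of $\langle M_n(\cdot)\rangle_t$: turning the pathwise Riemann-sum convergence --- which works only because, by Lemma~\ref{lemma_step_funct}, $y(\cdot,s)$ has finitely many values for $s>0$, so $\varphi^2\circ y(\cdot,s)$ has finitely many discontinuities despite $\varphi$ being merely bounded and piecewise continuous --- together with the error bound $\E|R_n(T)|\to0$ (which relies on the moment estimate $\E\,m(u,s)^{-1}\leq C/\sqrt{s}$ of Lemma~\ref{lemma_estim_exspect_m} and its integrability near $s=0$) into $L^1$-convergence strong enough to be matched against the Kunita--Watanabe limit.
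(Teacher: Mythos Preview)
Your proof is correct and shares its backbone with the paper's: both establish that $\langle M_n(\cdot)\rangle_t\to\int_0^1\int_0^t\varphi^2(y(u,s))\,ds\,du$ in $L^1$ by recycling the computation from Proposition~\ref{prop_fundam} (the paper just writes ``as in the proof of Proposition~\ref{prop_fundam}'', whereas you spell out the Riemann-sum argument and the error control via Lemma~\ref{lemma_estim_exspect_m}). The difference lies in the identification step. The paper proceeds by passing to the limit in the conditional-expectation identity $\E(M_n^2(t)-\langle M_n\rangle_t\mid\F_r)=M_n^2(r)-\langle M_n\rangle_r$: since $M_n\to M$ in $L^2$ and $\langle M_n\rangle_t$ converges in $L^1$, both sides converge and one reads off that $M^2(\cdot)-\int_0^1\int_0^{\cdot}\varphi^2(y(u,s))\,ds\,du$ is a martingale, hence the claimed process is $\langle M\rangle$. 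You instead invoke the Kunita--Watanabe inequality to show directly that $\langle M_n\rangle_t\to\langle M\rangle_t$ in $L^1$, then match limits. Your route is slightly more streamlined and avoids conditional expectations; the paper's route has the virtue of verifying the defining martingale property of the quadratic variation from scratch. Both are standard and of comparable length.
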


\begin{proof}
The martingale property of $\int_0^1\int_0^{\cdot}\varphi(y(u,s))dy(u,s)du$ follows from its construction. Let us check~\eqref{f_charact_of_int}. As in the proof of Proposition~\ref{prop_fundam} we can show that for each $t\in[0,T]$
$$
\langle M_n(\cdot)\rangle_t\to\int_0^1\int_0^t\varphi^2(y(u,s))dsdu\quad\mbox{in}\ L_1
$$
Next, take $r\leq t$ and consider
$$
\E(M_n^2(t)-\langle M_n(\cdot)\rangle_t|\F_r)=M_n^2(r)-\langle M_n(\cdot)\rangle_r.
$$
By the dominated convergence theorem, for the conditional expectations one has
\begin{align*}
&\E\left(\left.\left(\int_0^1\int_0^t\varphi(y(u,s))dy(u,s)du\right)^2- \int_0^1\int_0^t\varphi^2(y(u,s))dsdu\right|\F_r\right)\\
&=\left(\int_0^1\int_0^r\varphi(y(u,s))dy(u,s)du\right)^2-\int_0^1\int_0^r\varphi^2(y(u,s))dsdu.
\end{align*}
Hence~\eqref{f_charact_of_int} is valid. It completes the proof of the proposition.
\end{proof}

Now we are ready to establish an analog of Ito's formula.

\begin{thm}\label{theorem_Ito_formula}
For each twice continuously differentiable function $\varphi:\R\to\R$ having bounded derivatives we have
\begin{align*}
\int_0^1\varphi(y(u,t))du&=\int_0^1\varphi(u)du+\int_0^1\int_0^t\dot{\varphi}(y(u,s))dy(u,s)du\\ &+\frac{1}{2}\int_0^t\int_0^1\frac{\ddot{\varphi}(y(u,s))}{m(u,s)}duds,\quad t\in[0,T].
\end{align*}
\end{thm}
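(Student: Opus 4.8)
The plan is to prove the identity by discretizing in the spatial variable $u$, applying the classical one-dimensional It\^o formula to each particle, and then letting the mesh of the partition tend to zero, invoking Proposition~\ref{prop_fundam} for the martingale term and the moment estimate of Lemma~\ref{lemma_estim_exspect_m} for the drift term.

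First I would fix a sequence of partitions $0=u_0^n<\dots<u_n^n=1$ with $\lambda_n=\max_{k}\Delta u_k^n\to0$ and tags $v_k^n\in[u_{k-1}^n,u_k^n]$. By $(C1)$, $(C2)$ and $(C4)$ each $y(v_k^n,\cdot)$ is a continuous square integrable martingale starting from $v_k^n$ with $\langle y(v_k^n,\cdot)\rangle_t=\int_0^t\frac{ds}{m(v_k^n,s)}$, so the ordinary It\^o formula gives
\begin{align*}
\varphi(y(v_k^n,t))&=\varphi(v_k^n)+\int_0^t\dot\varphi(y(v_k^n,s))\,dy(v_k^n,s)\\
&\quad+\frac12\int_0^t\frac{\ddot\varphi(y(v_k^n,s))}{m(v_k^n,s)}\,ds.
\end{align*}
Multiplying by $\Delta u_k^n$ and summing over $k\in[n]$ yields the exact identity
$$
\sum_{k=1}^n\varphi(y(v_k^n,t))\Delta u_k^n=\sum_{k=1}^n\varphi(v_k^n)\Delta u_k^n+M_n(t)+\frac12\int_0^tg_n(s)\,ds,
$$
where $M_n(\cdot)=\sum_{k=1}^n\Delta u_k^n\int_0^\cdot\dot\varphi(y(v_k^n,s))\,dy(v_k^n,s)$ and $g_n(s)=\sum_{k=1}^n\frac{\ddot\varphi(y(v_k^n,s))}{m(v_k^n,s)}\Delta u_k^n$. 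It then remains to pass to the limit $\lambda_n\to0$ in each of the four terms.

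The left-hand side and the first term on the right are elementary. By Lemma~\ref{lemma_step_funct}, for each $t\in(0,T]$ the map $u\mapsto y(u,t)$ is a.s.\ a bounded step function, hence $u\mapsto\varphi(y(u,t))$ is Riemann integrable and its Riemann sums with tags $v_k^n$ converge a.s.\ to $\int_0^1\varphi(y(u,t))\,du$, while $\sum_{k}\varphi(v_k^n)\Delta u_k^n\to\int_0^1\varphi(u)\,du$ by continuity of $\varphi$. Since $\dot\varphi$ is bounded and continuous, Proposition~\ref{prop_fundam} applied with $\dot\varphi$ in place of $\varphi$ shows $M_n(\cdot)\to\int_0^1\int_0^\cdot\dot\varphi(y(u,s))\,dy(u,s)\,du$ in $\M$; by Doob's $L^2$ inequality this convergence is uniform on $[0,T]$ in the $L^2$ sense, so in particular $M_n(t)\to\int_0^1\int_0^t\dot\varphi(y(u,s))\,dy(u,s)\,du$ in $L^2$ for every fixed $t$.

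The drift term is the crux. For fixed $s\in(0,T]$, Lemma~\ref{lemma_step_funct} (particles that have met stay together) shows that $y(u,s)=y(v,s)$ implies $\{w:\tau_{u,w}\le s\}=\{w:\tau_{v,w}\le s\}$ and hence $m(u,s)=m(v,s)$; together with $(C3)$ this makes $u\mapsto m(u,s)$ a step function, constant on the (interval) level sets of $y(\cdot,s)$, so for a.e.\ $(s,\omega)$ the Riemann sum $g_n(s,\omega)$ converges to $g(s,\omega):=\int_0^1\frac{\ddot\varphi(y(u,s,\omega))}{m(u,s,\omega)}\,du$ as $\lambda_n\to0$. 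To interchange this limit with $\E$ and $\int_0^t\,ds$, I would apply Jensen's inequality, $|g_n(s)|^\beta\le\|\ddot\varphi\|^\beta\sum_{k}\frac{\Delta u_k^n}{m^\beta(v_k^n,s)}$, together with Lemma~\ref{lemma_estim_exspect_m}: for a fixed $\beta\in\left(1,\frac32\right)$ this gives $\sup_n\E|g_n(s)|^\beta\le Cs^{-1/2}<\infty$, so $\{g_n(s)\}_n$ is uniformly integrable and $\E|g_n(s)-g(s)|\to0$ for each $s>0$; moreover $\E|g_n(s)-g(s)|\le 2\|\ddot\varphi\|Cs^{-1/2}$, which is integrable on $[0,t]$, so dominated convergence in $s$ gives $\int_0^tg_n(s)\,ds\to\int_0^t\int_0^1\frac{\ddot\varphi(y(u,s))}{m(u,s)}\,du\,ds$ in $L^1$. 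Extracting a subsequence along which all four convergences hold a.s., the asserted identity follows for each fixed $t$, and, since its right-hand side is a.s.\ continuous in $t$ (the stochastic integral belongs to $\M$ and the drift term is absolutely continuous in $t$), it extends to all $t\in[0,T]$. The main obstacle is exactly this drift term: controlling the spatial Riemann sums of $\ddot\varphi(y(\cdot,s))/m(\cdot,s)$ uniformly in $n$ as $s\downarrow0$, where $1/m$ is singular, which is precisely why the super-linear moment bound with exponent $\beta>1$ in Lemma~\ref{lemma_estim_exspect_m} is indispensable.
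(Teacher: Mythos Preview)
Your argument is correct, and its overall skeleton (discretize in $u$, apply the one-dimensional It\^o formula to each $y(v_k^n,\cdot)$, pass to the limit using Proposition~\ref{prop_fundam} for the martingale part) is the same as the paper's. The genuine difference is in how you handle the singularity of the drift at $s=0$.

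The paper avoids the singularity by a two-step limit: it first applies It\^o's formula on $[\eps,t]$ rather than $[0,t]$, and for fixed $\eps>0$ uses the \emph{almost sure} random bound $S_n(s)\le\|\ddot\varphi\|\sup_{u}\frac{1}{m(u,\eps)}<\infty$ (monotonicity of $m$ in $s$ together with Lemma~\ref{lemma_step_funct}) to run dominated convergence in $s$ pathwise, obtaining $I_n^3\to\int_{\eps}^t\int_0^1\frac{\ddot\varphi(y(u,s))}{m(u,s)}\,du\,ds$ a.s.; only at the very end does it let $\eps\downarrow0$, using merely the first-moment bound Lemma~\ref{lemma_estim_expect_char}. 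By contrast, you work on $[0,t]$ in one shot and trade a.s.\ control for $L^1$ control: Jensen plus the super-linear moment bound (Lemma~\ref{lemma_estim_exspect_m} with $\beta\in(1,\tfrac32)$) gives uniform integrability of $\{g_n(s)\}_n$ for each $s>0$, hence $L^1$ convergence, and the integrable envelope $\E|g_n(s)-g(s)|\le C s^{-1/2}$ lets you apply dominated convergence in $s$. Your route is more streamlined (no auxiliary $\eps$) but uses the full strength of Lemma~\ref{lemma_estim_exspect_m} with $\beta>1$; the paper's route only needs $\beta=1$ but pays for it with the extra limiting step.
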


\begin{proof}
Let $\eps>0$ and $0=u_0^n<\ldots<u_n^n=1$. By Ito's formula,
\begin{align*}
\sum_{k=1}^n\varphi(y(u_k^n,t))\Delta u_k^n&=\sum_{k=1}^n\varphi(y(u_k^n,\eps))\Delta u_k^n\\
&+\sum_{k=1}^n\int_{\eps}^t\dot{\varphi}(y(u_k^n,s))dy(u_k^n,s)\Delta u_k^n\\ &+\frac{1}{2}\sum_{k=1}^n\int_{\eps}^t\frac{\ddot{\varphi}(y(u_k^n,s))}{m(u_k^n,s)}ds\Delta u_k^n=I_n^1+I_n^2+I_n^3.
\end{align*}
Note that
$$
I_n^1\to\int_0^1\varphi(y(u,\eps))du\quad\mbox{a.s.},\ \ \lambda_n\to 0,
$$
and
$$
I_n^2\to M(t)-M(\eps)\quad\mbox{in}\ L_2, \ \ \lambda_n\to 0,
$$
where
$$
M(t)=\int_0^1\int_0^t\dot{\varphi}(y(u,s))dy(u,s)du,\quad t\in[0,T].
$$
Hence these sequences converge in probability. Denote
$$
S_n(s)=\sum_{k=1}^n\frac{\ddot{\varphi}(y(u_k^n,s))}{m(u_k^n,s)}\Delta u_k^n,\quad s\in[\eps,t]
$$
By Lemma~\ref{lemma_step_funct} almost surely $m(\cdot,s)$ is a step function with a finite number of discontinuity points  and $\sup\limits_{u\in[0,1]}\frac{1}{m(u,s)}<\infty$, for all $s\in[\eps,t]$, so
$$
S_n(s)\to\int_0^1\frac{\ddot{\varphi}(y(u,s))}{m(u,s)}du\quad\mbox{a.s.},\ \ \lambda_n\to 0,\ \ s\in[\eps,t].
$$
Next by the monotonicity of $m(u,s),\ s\in[\eps,t]$, for all $u\in[0,1]$, we have
$$
S_n(s)\leq\|\ddot{\varphi}\|\sup\limits_{u\in[0,1]}\frac{1}{m(u,\eps)}<\infty,\quad s\in[\eps,t],\ \ a.s.
$$
Thus by the dominated convergence theorem,
$$
\int_{\eps}^t S_n(s)ds\to\int_{\eps}^t\int_0^1\frac{\ddot{\varphi}(y(u,s))}{m(u,s)}duds\quad\mbox{a.s.},\ \ \lambda_n\to 0.
$$
So,
$$
I_n^3\to\int_{\eps}^t\int_0^1\frac{\ddot{\varphi}(y(u,s))}{m(u,s)}duds\quad\mbox{in probability},\ \ \lambda_n\to 0.
$$
Hence
\begin{align*}
\int_0^1\varphi(y(u,t))du&=\int_0^1\varphi(y(u,\eps))du+M(t)-M(\eps)\\
&+\frac{1}{2}\int_{\eps}^t\int_0^1\frac{\ddot{\varphi}(y(u,s))}{m(u,s)}duds.
\end{align*}
{\col
Note that the integral $\int_0^t\int_0^1\frac{\ddot{\varphi}(y(u,s))}{m(u,s)}duds$ exists a.s., since
$$
\E\int_0^t\int_0^1\frac{|\ddot{\varphi}(y(u,s))|}{m(u,s)}duds\leq \sup_{x\in\R}|\ddot{\varphi}(x)|\E\int_0^t\int_0^1\frac{duds}{m(u,s)}<\infty,
$$
by Lemma~\ref{lemma_estim_expect_char}. So, passing to the limit as $\eps\to 0$ and using the dominated convenience theorem, we obtain
\begin{align*}
\lim_{\eps\to 0}\frac{1}{2}\int_{\eps}^t\int_0^1\frac{\ddot{\varphi}(y(u,s))}{m(u,s)}duds&=-\int_0^1\varphi(u)du\\
&-M(t)+\int_0^1\varphi(y(u,t))du,
\end{align*}
where $\lim$ we understand as the limit almost surely. The theorem is proved.
}
\end{proof}

\section{Total local time}\label{section_local_time}

In this section we define a local time for the constructed flow and prove its existence.

\begin{dfn}\label{definition_local_time}
A random process $\{L(a,t),\ a\in\R,\ t\in[0,T]\}$ is said to be the local time for the process $\{y(u,t),\ u\in[0,1],\ t\in[0,T]\}$ if
\begin{enumerate}
\item[(a)] $(a,t)\to L(a,t)$ is a continuous map a.s.;

\item[(b)] for every continuous function $f$ with compact support
$$
\int_0^1\int_0^{\tau(u)\wedge t}f(y(u,s))ds=2\int_{-\infty}^{+\infty}f(a)L(a,t)da.
$$
where the integral in the left hand side is defined by A.~A.~Dorogovtsev (see e.g.~\cite{Dorogovtsev:2007:en} or Appendix~\ref{app_integral})
\end{enumerate}
\end{dfn}

\begin{thm}\label{theorem_local_time}
The local time for the flow $\{y(u,t),\ u\in[0,1],\ t\in[0,T]\}$ exists. Moreover
\begin{align}\label{f_local_time}
\begin{split}
L(a,t)&=\int_0^1(y(u,t)-a)^+du-\int_0^1(u-a)^+du\\
&-\int_0^1\int_0^t\I_{(a,+\infty)}(y(u,s))dy(u,s)du.
\end{split}
\end{align}
\end{thm}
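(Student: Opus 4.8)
The plan is to take the right--hand side of~\eqref{f_local_time} as the \emph{definition} of $L(a,t)$, to recognise it as the limit of the second--order term in the It\^o formula of Theorem~\ref{theorem_Ito_formula} applied to mollifications of $x\mapsto (x-a)^+$, and then to verify properties (a) and (b) of Definition~\ref{definition_local_time} separately. Throughout I would use that, for each $u$, $y(u,\cdot)$ is a continuous martingale with $d\langle y(u,\cdot)\rangle_s=ds/m(u,s)$, and that for $s>0$ the function $y(\cdot,s)$ is a finite step function whose level set through $u$ has Lebesgue measure exactly $m(u,s)$ (Lemma~\ref{lemma_step_funct} and the coalescing property).

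First I would fix $a\in\R$ and pick $\varphi_{\eps}\in C^2(\R)$ with bounded derivatives, $0\le\dot\varphi_{\eps}\le 1$, $\ddot\varphi_{\eps}\ge 0$, $\varphi_{\eps}\to(\cdot-a)^+$ and $\dot\varphi_{\eps}\to\I_{(a,+\infty)}$ pointwise off $a$, with $\ddot\varphi_{\eps}$ an approximate identity concentrating at $a$. Theorem~\ref{theorem_Ito_formula} gives, for every $\eps$,
\begin{align*}
\int_0^1\varphi_{\eps}(y(u,t))\,du&-\int_0^1\varphi_{\eps}(u)\,du-\int_0^1\int_0^t\dot\varphi_{\eps}(y(u,s))\,dy(u,s)\,du\\
&=\frac12\int_0^t\int_0^1\frac{\ddot\varphi_{\eps}(y(u,s))}{m(u,s)}\,du\,ds .
\end{align*}
On the left, the first term converges a.s.\ to $\int_0^1(y(u,t)-a)^+du$ (dominated convergence in $u$: for each $\omega$, $y(\cdot,t,\omega)$ is a bounded step function), the second is elementary, and the third converges in $\M$ to $\int_0^1\int_0^{\cdot}\I_{(a,+\infty)}(y(u,s))\,dy(u,s)\,du$, which is well defined by Proposition~\ref{prop_fundam} since $\I_{(a,+\infty)}$ is bounded and piecewise continuous, and the quadratic variation of the difference, $\int_0^1\int_0^t(\dot\varphi_{\eps}-\I_{(a,+\infty)})^2(y(u,s))\,ds\,du$, tends to $0$ in $L_1$ by the occupation estimate derived from Lemma~\ref{lemma_estim_exspect_m}. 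Hence the right--hand side converges in probability, and calling its limit $L(a,t)$ yields~\eqref{f_local_time}. Comparing~\eqref{f_local_time} with the classical Tanaka formula for each $y(u,\cdot)$ integrated over $u$ (using that the $\M$--integral of Section~\ref{section_Ito_formula} coincides with the $u$--integral of the pathwise It\^o integrals, cf.~\eqref{f_charact_of_int}) gives the representation $L(a,t)=\tfrac12\int_0^1\ell_t^a(y(u,\cdot))\,du$, where $\ell_t^a(y(u,\cdot))$ is the semimartingale local time of $y(u,\cdot)$; in particular $L\ge 0$ and $L(a,\cdot)$ is non-decreasing.

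For property (a) I would apply the Kolmogorov--Chentsov criterion to the field $(a,t)\mapsto L(a,t)$, estimating the three terms of~\eqref{f_local_time} separately. The $(\cdot-a)^+$ terms are $1$--Lipschitz in $a$, and their $t$--increments are controlled through Burkholder--Davis--Gundy and $\E\langle y(u,\cdot)\rangle_t\le C\sqrt t$ (Lemma~\ref{lemma_estim_expect_char}); the stochastic--integral term is a martingale in $t$, so BDG reduces matters to moments of $\int_0^1\int_0^t\I_{(a\wedge a',\,a\vee a']}(y(u,s))\,ds\,du$, which, using $m(u,s)\le 1$, is dominated by $2\int_{a\wedge a'}^{a\vee a'}L(b,t)\,db$ and hence by $|a-a'|$ times moments of $\sup_bL(b,t)$, finite by Lemma~\ref{lemma_estim_exspect_m}. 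Combining these bounds (this is the scheme of~\cite{Chernega:2012}) yields $\E|L(a,t)-L(a',t')|^{p}\le C(|a-a'|+|t-t'|)^{1+\delta}$ for suitable $p,\delta>0$, whence joint continuity. I expect this part to be routine but the most computation--heavy.

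Finally, for property (b) I would combine the occupation--time formula for each semimartingale $y(u,\cdot)$,
\[
\int_0^t f(y(u,s))\,\frac{ds}{m(u,s)}=\int_{\R}f(a)\,\ell_t^a(y(u,\cdot))\,da ,
\]
with Fubini and the representation $L(a,t)=\tfrac12\int_0^1\ell_t^a(y(u,\cdot))\,du$ to obtain
\[
2\int_{\R}f(a)L(a,t)\,da=\int_0^1\int_0^t\frac{f(y(u,s))}{m(u,s)}\,ds\,du=\int_0^t\sum_{C}f(y_C(s))\,ds ,
\]
the last sum being over the finitely many (Corollary~\ref{coroll_step_function}) coalescing clusters $C$ present at time $s$, since $m(u,s)$ is the Lebesgue measure of the cluster of $u$. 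It then remains to identify this quantity with the Dorogovtsev integral $\int_0^1\int_0^{\tau(u)\wedge t}f(y(u,s))\,ds$ --- that integral is precisely designed to integrate $f$ along one representative trajectory per coalescing cluster --- which I would read off directly from the construction recalled in Appendix~\ref{app_integral}. This reconciliation of the closed--form expression with Dorogovtsev's limit--of--sums definition of the occupation integral is, to my mind, the real content of the theorem and the step most likely to require care.
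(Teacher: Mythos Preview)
Your overall strategy---define $L$ by~\eqref{f_local_time}, recover it as a mollification limit via Theorem~\ref{theorem_Ito_formula}, then verify (a) and (b)---matches the paper's. The divergence, and the gap, is your reliance on the representation $L(a,t)=\tfrac12\int_0^1\ell_t^a(y(u,\cdot))\,du$.

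That identity requires that the $\M$-integral $\int_0^1\int_0^t\varphi(y(u,s))\,dy(u,s)\,du$ coincide with the Lebesgue-in-$u$ integral of the per-particle It\^o integrals $\int_0^t\varphi(y(u,s))\,dy(u,s)$. This is nowhere proved in the paper, and~\eqref{f_charact_of_int}, which you cite, only records the quadratic variation; it says nothing about such a Fubini identity. To even write $\int_0^1\big(\int_0^t\varphi(y(u,s))\,dy(u,s)\big)\,du$ you must first produce a jointly $(u,\omega)$-measurable version of the inner integral (each one is defined only up to a $u$-dependent null set) and then identify the result with the $\M$-limit of Proposition~\ref{prop_fundam}. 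Since both your treatment of (a) (the bound via $\sup_bL(b,t)$, whose finiteness in fact needs Barlow--Yor on $\ell^a(y(u,\cdot))$ together with Lemma~\ref{lemma_estim_expect_char}, not Lemma~\ref{lemma_estim_exspect_m} alone) and your treatment of (b) rest on this representation, the gap is load-bearing.

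The paper never introduces per-particle local times. For (a) it views $\xi(a,\cdot)=\int_0^1\int_0^{\cdot}\I_{(a,\infty)}(y(u,s))\,dy(u,s)\,du$ as a $C[0,T]$-valued process in $a$ and proves $\E\sup_t|\xi(a,t)-\xi(b,t)|^4\le C(b-a)^2$ by a direct device: pick $\psi\in C^2$ with $\ddot\psi\ge 2\I_{(a,b]}$ and $|\dot\psi|\le 2(b-a)$, and bound the occupation time $\int_0^1\int_0^T\I_{(a,b]}(y(u,s))\,ds\,du$ through Theorem~\ref{theorem_Ito_formula} applied to $\psi$. For (b) it applies Theorem~\ref{theorem_Ito_formula} to $F(x)=\int f(a)(x-a)^+da$, which produces $\frac12\int_0^1\int_0^t f(y(u,s))/m(u,s)\,ds\,du$ on one side (identified with the Dorogovtsev integral via Corollary~\ref{lemma_integral}, exactly as you anticipated), and then justifies the \emph{other} Fubini---interchanging $\int_{\R}da$ with the $\M$-integral---by Riemann sums in $a$. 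That interchange is tractable precisely because $a\mapsto\xi(a,\cdot)$ has just been shown continuous, whereas the $u$-Fubini you need has no such regularity to lean on.
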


\begin{proof}
Let $g_n$ be a continuous positive function on $\R$ such that its support is contained in $\left(-\frac{1}{n}+a,\frac{1}{n}+a\right)$, $g_n(a+u)=g_n(a-u)$, $u\in\R$, and
$$
\int_{-\infty}^{+\infty}g_n(u)du=1.
$$
Set
$$
f_n(u)=\int_{-\infty}^udp\int_{-\infty}^pg_n(q)dq.
$$
By Ito's formula (see Theorem~\ref{theorem_Ito_formula}) and Corollary~\ref{lemma_integral},
\begin{align*}
\int_0^1f_n(&y(u,t))du-\int_0^1f_n(u)du-\int_0^1\int_0^t\dot{f}_n(y(u,s))dy(u,s)du\\
&=\frac{1}{2}\int_0^1\int_0^t\frac{\ddot{f}_n(y(u,s))}{m(u,s)}dsdu=\frac{1}{2}\int_0^1\int_0^{\tau(u)\wedge t}\ddot{f}_n(y(u,s))ds.
\end{align*}
If the local time exists then passing to the limit in previous expression we deduce that it should have the form~\eqref{f_local_time}.

{\cob In what follows we will show that} the family of random variables $\{L(a,t),\ a\in\R,\ t\in[0,T]\}$ satisfies properties $(a)$ and $(b)$ of Definition~\ref{definition_local_time}. Since $(y(u,t)-a)^+-(u-a)^+$ is continuous in $(a,t)$ with probability 1, for all $u\in[0,1]$,
$$
\int_0^1(y(u,t)-a)^+du-\int_0^1(u-a)^+du
$$
is also continuous in $(a,t)$ a.s. Next, let us show that the $C[0,T]$-valued process
$$
\xi(a,\cdot)=\int_0^1\int_0^{\cdot}\I_{(a,+\infty)}(y(u,s))dy(u,s)du,\quad a\in\R,
$$
has a continuous modification. Note that for $a<b$
$$
\xi(a,t)-\xi(b,t)=\int_0^1\int_0^{t}\I_{(a,b]}(y(u,s))dy(u,s)du,\quad t\in[0,T],
$$
is a continuous square integrable martingale with the quadratic variation
$$
\langle\xi(a,\cdot)-\xi(b,\cdot)\rangle_t=\int_0^1\int_0^{t}\I_{(a,b]}(y(u,s))dsdu,\quad t\in[0,T].
$$
By Theorem~3.3.1~\cite{Watanabe:1981:en}
\begin{align*}
\E\sup\limits_{t\in[0,T]}&\left|\xi(a,t)-\xi(b,t)\right|^4\leq C\E\langle\xi(a,\cdot)-\xi(b,\cdot)\rangle_T^2\\
&=C\E\left(\int_0^1\int_0^{T}\I_{(a,b]}(y(u,s))dsdu\right)^2.
\end{align*}

Let $\psi:\R\to\R$ be a twice continuously differentiable function with bounded derivatives such that
$$
\ddot{\psi}(u)\geq2\I_{(a,b]}(u),\quad |\dot{\psi}(u)|\leq 2(b-a),\quad u\in\R.
$$
Then, by Theorem~\ref{theorem_Ito_formula}
\begin{align*}
\int_0^1\int_0^{T}\I_{(a,b]}&(y(u,s))dsdu\leq\frac{1}{2}\int_0^1\int_0^{T}\ddot{\psi}(y(u,s))dsdu\\
&\leq\frac{1}{2}\int_0^1\int_0^{T}\frac{\ddot{\psi}(y(u,s))}{m(u,s)}dsdu=\int_0^1\psi(y(u,T))du-\int_0^1\psi(u)du\\
&-\int_0^1\int_0^T\dot{\psi}(y(u,s))dy(u,s)du\leq\|\dot{\psi}\|\int_0^1|y(u,T)-u|du\\
&+\left|\int_0^1\int_0^T\dot{\psi}(y(u,s))dy(u,s)du\right|.
\end{align*}
Next estimate
\begin{align*}
&\E\left(\int_0^1\int_0^T\dot{\psi}(y(u,s))dy(u,s)du\right)^2=\\
&=\E\int_0^1\int_0^T\dot{\psi}^2(y(u,s))dsdu\leq 4T(b-a)^2.
\end{align*}
Consequently, by Lemma~\ref{lemma_expect_of_y2}
\begin{align*}
\E\sup\limits_{t\in[0,T]}&\left|\xi(a,t)-\xi(b,t)\right|^4\leq2\|\dot{\psi}\|^2\E\left(\int_0^1|y(u,T)-u|du\right)^2\\
&+2\E\left(\int_0^1\int_0^T\dot{\psi}(y(u,s))dy(u,s)du\right)^2\\
&\leq 8(b-a)^2\E\int_0^1(y(u,T)-u)^2du\\
&+8T(b-a)^2\leq C_1(b-a)^2.
\end{align*}
So, $\xi(a,\cdot)$ has a continuous modification. Further we will consider only this modification. Hence $L(a,t),\ a\in\R,\ t\in[0,T]$, is a continuous process. Let us check Condition $(b)$ of Definition~\ref{definition_local_time}. Let $f$ be a continuous function on $\R$ with a compact support. Set
$$
F(x)=\int_{-\infty}^{+\infty}f(a)(x-a)^+da,\quad x\in\R.
$$
It is clear that $F$ is a twice continuously differentiable function with
$$
\dot{F}(x)=\int_{-\infty}^xf(a)da,\quad
\ddot{F}(x)=f(x),\quad x\in\R.
$$
By Ito's formula (see Theorem~\ref{theorem_Ito_formula}),
\begin{align*}
\int_0^1F(&y(u,t))du-\int_0^1F(u)du-\int_0^1\int_0^t\dot{F}(y(u,s))dy(u,s)ds\\
&=\frac{1}{2}\int_0^1\int_0^t\frac{\ddot{F}(y(u,s))}{m(u,s)}dsdu=\frac{1}{2}\int_0^1\int_0^{\tau(u)\wedge t}f(y(u,s))ds.
\end{align*}
Rewrite the left hand side of the previous relation
\begin{align*}
\int_{-\infty}^{+\infty}f(&a)\left[\int_0^1(y(u,t)-a)^+du-\int_0^1(u-a)^+du\right]da\\
&-\int_0^1\int_0^t\int_{-\infty}^{+\infty}f(a)\I_{(a,+\infty)}(y(u,s))dady(u,s)du.
\end{align*}
If we could rearrange the order of integration in the last integral then we would obtain
\begin{align*}
&\int_{-\infty}^{+\infty}f(a)\left[\int_0^1(y(u,t)-a)^+du-\int_0^1(u-a)^+du\right.\\
&\left.-\int_0^1\int_0^t\I_{(a,+\infty)}(y(u,s))dy(u,s)du\right]da=\frac{1}{2}\int_0^1\int_0^{\tau(u)\wedge t}f(y(u,s))ds.
\end{align*}
Let us show that we can rearrange the order of integration. First prove that
$$
I(t)=\int_{-\infty}^{+\infty}\left(f(a)\int_0^1\int_0^t\I_{(a,+\infty)}(y(u,s))dy(u,s)du\right)da,\quad t\in[0,T],
$$
has a continuous modification. To check it we estimate
\begin{align*}
\E\int_{-\infty}^{+\infty}|&f(a)|\max\limits_{t\in[0,T]}\left|\int_0^1\int_0^t \I_{(a,+\infty)}(y(u,s))dy(u,s)du\right|da\\
&\leq\int_{-\infty}^{+\infty}|f(a)|\left(\E\max\limits_{t\in[0,T]}\left|\int_0^1\int_0^t \I_{(a,+\infty)}(y(u,s))dy(u,s)du\right|^2\right)^{\frac{1}{2}}da\\
&\leq C\int_{-\infty}^{+\infty}|f(a)|\left(\E\int_0^1\int_0^T \I_{(a,+\infty)}(y(u,s))dsdu\right)^{\frac{1}{2}}da\\
&\leq CT\int_{-\infty}^{+\infty}|f(a)|da < +\infty.
\end{align*}
Since $\xi(a,t),\ a\in\R,\ t\in[0,T],$ is continuous a.s., $\xi(a,\cdot)$ is continuous with probability 1, for all $a\in\R$. From the last estimation it easily follows that $I(t),\ t\in[0,T]$, is continuous a.s.

Next, let $\supp f\subseteq [-M,M]$,
$$
-M=a_0\leq \widetilde{a}_1\leq a_1\leq\widetilde{a}_2\leq\ldots\leq a_n=M
$$
and $\lambda_n=\max\limits_{k\in[n]}(a_k-a_{k-1})$. Consider for $t\in[0,T]$
\begin{align*}
I_n(t)&=\sum_{k=1}^nf(\widetilde{a}_k)\int_0^1\int_0^t\I_{(\widetilde{a}_k,+\infty)}(y(u,s))dy(u,s)du\Delta a_k\\
&=\int_0^1\int_0^t\sum_{k=1}^nf(\widetilde{a}_k)\I_{(\widetilde{a}_k,+\infty)}(y(u,s))\Delta a_kdy(u,s)du.
\end{align*}
Note that for a fixed $t\in[0,T]$
$$
I_n(t) \to \int_{-\infty}^{+\infty}\left(f(a)\int_0^1\int_0^t\I_{(a,+\infty)}(y(u,s))dy(u,s)du\right)da\quad \lambda_n\to 0,\ \ \mbox{a.s.}
$$

On the other hand,
\begin{align*}
&\left\langle\int_0^1\int_0^{\cdot} \left[\sum_{k=1}^nf(\widetilde{a}_k)\I_{(\widetilde{a}_k,+\infty)}(y(u,s))\Delta a_k\right.\right.\\
&\left.\left.-\int_{-\infty}^{+\infty}f(a)\I_{(a,+\infty)}(y(u,s))da\right]dy(u,s)du\right\rangle_t\\
&=\int_0^1\int_0^t \left[\sum_{k=1}^nf(\widetilde{a}_k)\I_{(\widetilde{a}_k,+\infty)}(y(u,s))\Delta a_k\right.\\
&\left.-\int_{-\infty}^{+\infty}f(a)\I_{(a,+\infty)}(y(u,s))da\right]^2dsdu\to 0,\quad\lambda_n\to 0,\ \ \mbox{a.s.},
\end{align*}
by the dominated convergence theorem. So,
$$
I_n(t)\to\int_0^1\int_0^t\left(\int_{-\infty}^{+\infty}f(a)\I_{(a,+\infty)}(y(u,s))da\right)dy(u,s)du,\quad\lambda_n\to 0,\ \ \mbox{in}\ L_2.
$$
Hence, by the continuity of the integrals in $t$ we have
\begin{align*}
&\int_{-\infty}^{+\infty}\left(\int_0^1\int_0^{\cdot}f(a)\I_{(a,+\infty)}(y(u,s))dy(u,s)du\right)da\\
&=\int_0^1\int_0^{\cdot}\left(\int_{-\infty}^{+\infty}f(a)\I_{(a,+\infty)}(y(u,s))da\right)dy(u,s)du.
\end{align*}
This proves the theorem.
\end{proof}

\appendix

\section{Some results about tightness in space $D([0,1],C(0,T])$}
Here we will show that the tightness of a system of probability measures $\{P_n\}_{n\geq 1}$ in the space $D([0,1],C(0,T])$ is equivalent to the tightness of the set of probability measures $\{P_n\circ\pi_{\eps}^{-1}\}_{n\geq 1}$ in $D([0,1],C[\eps,T])$ for all $\eps>0$, where $\pi_{\eps}$ is a restriction map on $D([0,1],C[\eps,T])$, which will be defined later.

Denote by $C(0,T]$ the space of continuous functions from $(0,T]$ to $\R$ with the metric
$$
\rho(f,g)=\sum_{k=1}^{\infty}\frac{1}{2^k}\left(\sup\limits_{t\in\left[\frac{1}{k},T\right]}|f(t)-g(t)|\wedge 1\right).
$$
For a function $f:[0,1]\times(0,T]\to\R$ and $\eps>0$ set
$$
(\pi_{\eps}f)(u,t)=\pi_{\eps}f(u,t)=f(u,t),\quad u\in[0,1],\ \ t\in[\eps,T].
$$
Note that $\pi_{\eps}f:[0,1]\times[\eps,T]\to\R$.

\begin{lem}\label{lemma_funct_f}
A function $f:[0,1]\times(0,T]\to\R$ belongs to $D([0,1],C(0,T])$ if and only if for every $\eps>0$ $\pi_{\eps}f\in D([0,1],C[\eps,T])$.
\end{lem}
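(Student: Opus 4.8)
The plan is to reduce the statement to the elementary observation that, for the metric $\rho$ on $C(0,T]$, convergence $\rho(f_m,f)\to 0$ is equivalent to $\sup_{t\in[\eps,T]}|f_m(t)-f(t)|\to 0$ for every $\eps>0$ — equivalently, for every $k\ge1$, since $(0,T]=\bigcup_k[1/k,T]$ and each $[\eps,T]$ is contained in some $[1/k,T]$. This equivalence is immediate: if each sup tends to $0$ then $\rho(f_m,f)\to0$ by dominated convergence applied to the defining series, and conversely each nonnegative term of that series is dominated by the whole sum. Once this is in hand, membership of $f$ in $D([0,1],C(0,T])$ and membership of $\pi_\eps f$ in $D([0,1],C[\eps,T])$ (for all $\eps$) become term-by-term restatements of one another, and the lemma will follow by comparing the right-continuity and left-limit conditions.

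For the ``only if'' direction I would argue as follows. Assume $f\in D([0,1],C(0,T])$ and fix $\eps>0$. Each slice $f(u,\cdot)$ lies in $C(0,T]$, so its restriction $\pi_\eps f(u,\cdot)$ lies in $C[\eps,T]$, and $\pi_\eps f$ is a well-defined map $[0,1]\to C[\eps,T]$. For $u\in[0,1)$, right continuity of $f$ at $u$ says $\rho(f(v,\cdot),f(u,\cdot))\to0$ as $v\downarrow u$, which by the observation forces $\|\pi_\eps f(v,\cdot)-\pi_\eps f(u,\cdot)\|_{C[\eps,T]}\to0$; hence $\pi_\eps f$ is right continuous at $u$. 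For $u\in(0,1]$, the left limit $f(u-,\cdot)\in C(0,T]$ exists, and the same observation gives that $\pi_\eps f(v,\cdot)$ converges in $C[\eps,T]$ as $v\uparrow u$ (to the restriction of $f(u-,\cdot)$ to $[\eps,T]$), so $\pi_\eps f$ has a left limit at $u$. Therefore $\pi_\eps f\in D([0,1],C[\eps,T])$.

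For the ``if'' direction, assume $\pi_\eps f\in D([0,1],C[\eps,T])$ for all $\eps>0$. Then each $f(u,\cdot)$ is continuous on every $[\eps,T]$, hence on $(0,T]$, so $f$ maps $[0,1]$ into $C(0,T]$. Right continuity of $f$ at $u\in[0,1)$ is immediate: for each $\eps$, right continuity of $\pi_\eps f$ gives $\sup_{[\eps,T]}|f(v,\cdot)-f(u,\cdot)|\to0$ as $v\downarrow u$, and since this holds for all $\eps$ the observation yields $\rho(f(v,\cdot),f(u,\cdot))\to0$. The real work is in showing $f$ has a left limit at a fixed $u\in(0,1]$. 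For each $k$ set $g_k:=\lim_{v\uparrow u}\pi_{1/k}f(v)$ in $C[1/k,T]$. The family $(g_k)_{k\ge1}$ is consistent, i.e.\ $g_l|_{[1/k,T]}=g_k$ whenever $l\ge k$, as one sees by applying the continuous restriction map $C[1/l,T]\to C[1/k,T]$ to the limit defining $g_l$ and using $\pi_{1/l}f(v)|_{[1/k,T]}=\pi_{1/k}f(v)$. Hence the $g_k$ patch together into a single function $g$ on $(0,T]=\bigcup_k[1/k,T]$ with $g|_{[1/k,T]}=g_k$; being continuous on each $[1/k,T]$, the function $g$ belongs to $C(0,T]$. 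Finally, each term $2^{-k}\bigl(\sup_{[1/k,T]}|f(v,\cdot)-g|\wedge1\bigr)$ of $\rho(f(v,\cdot),g)$ tends to $0$ as $v\uparrow u$ and is bounded by $2^{-k}$, so $\rho(f(v,\cdot),g)\to0$; thus $g$ is the left limit of $f$ at $u$, and $f\in D([0,1],C(0,T])$.

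I expect the main (and only mildly delicate) point to be this reassembly of the left limits: one has a left limit $g_k$ at each level $1/k$, and one must check these are consistent and glue to an element of $C(0,T]$, after which termwise convergence on the intervals $[1/k,T]$ upgrades to convergence in the metric $\rho$ via dominated convergence for the series. Everything else is a routine transcription through the equivalence of the two notions of convergence.
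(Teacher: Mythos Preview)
Your proof is correct and complete; the paper itself gives no argument beyond the sentence ``The proof is a standard technical exercise,'' and what you have written is precisely the standard exercise the author has in mind. The only nontrivial step is the one you flag---patching the level-wise left limits $g_k$ into a single element of $C(0,T]$ and then upgrading to $\rho$-convergence via the summable tail---and you have handled it cleanly.
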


The proof is a standard technical exercise.

\begin{lem}\label{lemma_convergence_in_D}
Let $\{f_n\}_{n\geq 1}$ be a sequence of functions in $D([0,1],C(0,T])$.
\begin{enumerate}
\item[(i)] If the sequence $\{f_n\}_{n\geq 1}$ converges to a function $f$ in $D([0,1],C(0,T])$ then for every $\eps>0$ $\{\pi_{\eps}f_n\}_{n\geq 1}$ converges to $\pi_{\eps}f$ in $D([0,1],C[\eps,T])$.

\item[(ii)] If for every $\eps>0$ $\{\pi_{\eps}f_n\}_{n\geq 1}$ converges to a function $g_{\eps}$ in $D([0,1],C[\eps,T])$ then the function
\begin{equation}\label{f_function_f}
f(u,t)=g_{\eps}(u,t),\quad u\in[0,1],\ \ t\in(0,T], \ \ \eps\leq t,
\end{equation}
is well defined and $\{f_n\}_{n\geq 1}$ converges to $f$ in $D([0,1],C(0,T])$.
\end{enumerate}
\end{lem}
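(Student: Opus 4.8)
The plan is to reduce everything to the standard characterisation of convergence in a Skorohod space: $g_n\to g$ in $D([0,1],E)$ iff there exist increasing homeomorphisms $\lambda_n$ of $[0,1]$ fixing $0$ and $1$ with $\sup_{u}|\lambda_n(u)-u|\to0$ and $\sup_{u}d_E(g_n(\lambda_n(u)),g(u))\to0$; I would also use the elementary remark that, to prove such a convergence, it suffices to produce for every $\delta>0$ an index $N$ and, for $n\ge N$, time changes $\lambda_n$ making both suprema smaller than $\delta$ (one then glues the families obtained along a sequence $\delta_m\downarrow0$). Finally I would record the inequality $\rho(g,h)\le\sup_{t\in[1/K,T]}|g(t)-h(t)|+2^{-K}$, valid for every $K\in\N$ and $g,h\in C(0,T]$, which follows by splitting at $k=K$ the series defining the metric $\rho$ on $C(0,T]$ given just before Lemma~\ref{lemma_funct_f}; this is the bridge between the metric of $C(0,T]$ and the uniform metrics of the spaces $C[\eps,T]$.

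For part (i) I would take the time changes $\lambda_n$ witnessing $f_n\to f$ in $D([0,1],C(0,T])$ and check that the \emph{same} $\lambda_n$ witness $\pi_\eps f_n\to\pi_\eps f$ in $D([0,1],C[\eps,T])$. Setting $r_n=\sup_u\rho(f_n(\lambda_n(u),\cdot),f(u,\cdot))\to0$ and keeping only the $k$-th term of the series for $\rho$, with $k$ chosen so that $1/k\le\eps$, one obtains $\sup_u\bigl(\sup_{t\in[\eps,T]}|f_n(\lambda_n(u),t)-f(u,t)|\wedge1\bigr)\le 2^{k}r_n\to0$; for $n$ large this is $<1$, the truncation is inactive, and hence $\sup_u\|f_n(\lambda_n(u),\cdot)-f(u,\cdot)\|_{C[\eps,T]}\to0$, which together with $\sup_u|\lambda_n(u)-u|\to0$ is exactly the assertion.

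For part (ii) the first step is to show that $f$ in~\eqref{f_function_f} is well defined. Here I would note that, for $\eps'\le\eps$, the restriction map $R_{\eps',\eps}\colon D([0,1],C[\eps',T])\to D([0,1],C[\eps,T])$ that restricts each $C$-valued path to $[\eps,T]$ is continuous --- the same time changes transfer, and restricting to a shorter interval only decreases the relevant suprema. Since $\pi_\eps f_n=R_{\eps',\eps}(\pi_{\eps'}f_n)$ converges both to $g_\eps$ and, by this continuity, to $R_{\eps',\eps}g_{\eps'}$, uniqueness of limits gives $g_\eps=R_{\eps',\eps}g_{\eps'}$, i.e. the $g_\eps$ agree on overlaps; thus $f$ is unambiguous, $\pi_\eps f=g_\eps\in D([0,1],C[\eps,T])$ for every $\eps>0$, and Lemma~\ref{lemma_funct_f} yields $f\in D([0,1],C(0,T])$. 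For the convergence $f_n\to f$ I would fix $\delta>0$, choose $K$ with $2^{-K}<\delta/2$, and invoke the hypothesis $\pi_{1/K}f_n\to g_{1/K}=\pi_{1/K}f$ to get, for $n$ large, time changes $\lambda_n$ with $\sup_u|\lambda_n(u)-u|<\delta$ and $\sup_u\sup_{t\in[1/K,T]}|f_n(\lambda_n(u),t)-f(u,t)|<\delta/2$; the recorded inequality for $\rho$ then gives $\sup_u\rho(f_n(\lambda_n(u),\cdot),f(u,\cdot))<\delta$, and since $\delta$ was arbitrary, $f_n\to f$ by the criterion above.

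The one genuine difficulty, entirely in part (ii), is that the time changes supplied by the different convergences $\pi_\eps f_n\to g_\eps$ depend on $\eps$ and need not be compatible, so one cannot hope to exhibit a single sequence $\lambda_n$ working simultaneously for all $\eps$. The way around this is precisely the two preliminary observations: controlling $\rho$-distance to within $\delta$ involves only finitely many terms of the series, hence only the single compact subinterval $[1/K,T]$, on which one admissible time change is furnished by the given convergence $\pi_{1/K}f_n\to g_{1/K}$, and the ``for each $\delta$'' formulation of Skorohod convergence then suffices to finish.
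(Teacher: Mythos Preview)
Your proof is correct and follows essentially the same route as the paper's. Both argue part (i) by transporting the same time changes from $D([0,1],C(0,T])$ to $D([0,1],C[\eps,T])$, and both handle part (ii) by fixing $\delta$, choosing $K$ with $2^{-K}<\delta/2$, invoking the convergence $\pi_{1/K}f_n\to\pi_{1/K}f$ to obtain time changes good on $[1/K,T]$, and then splitting the series defining $\rho$ at $k=K$. The only cosmetic differences are that the paper works with the complete Skorohod metric via $\gamma(\lambda)=\sup_{s<t}|\ln\frac{\lambda(t)-\lambda(s)}{t-s}|$ rather than your equivalent sequential criterion with $\sup_u|\lambda_n(u)-u|$, and that for well-definedness the paper argues pointwise in $t$ (fixing $t\ge\eps_1\vee\eps_2$ and passing to $D([0,1],\R)$) rather than invoking continuity of the restriction $R_{\eps',\eps}$; your version is cleaner here.
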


\begin{proof}
Let $d_0$ and $d_{\eps}$ be the Skorohod metrics in $D([0,1],C(0,T])$ and $D([0,1],C[\eps,T])$ respectively. The inequality
$$
d_{\eps}(\pi_{\eps}f,\pi_{\eps}g)\leq Cd_0(f,g),\quad f,g\in D([0,1],C(0,T]),
$$
implies the assertion of the first part of the lemma.

To prove the second part, we first recall the definition of the Skorohod metric. Let $\Lambda$ denote a set of strictly increasing Lipschitz continuous functions from $[0,1]$ onto $[0,1]$. Set for $\lambda\in\Lambda$
$$
\gamma(\lambda)=\sup\limits_{0\leq s<t\leq 1}\left|\ln{\frac{\lambda(t)-\lambda(s)}{t-s}}\right|
$$
Then the metric on $D([0,1],C(0,T])$ is defined by
$$
d_0(f,g)=\inf\limits_{\lambda\in\Lambda}\left[\gamma(\lambda)\vee\sup\limits_{u\in[0,1]}\rho(f(\lambda(u),\cdot),g(u,\cdot))\wedge 1\right].
$$
The metric $d_{\eps}$ is defined similarly as $d_0$ replacing $\rho$ with $\rho_{\eps}$, where $\rho_{\eps}$ is uniform distance on $C[\eps,T]$.

Let $f$ be defined by~\eqref{f_function_f}. Prove that $f$ is well defined. Take $t\in[\eps_1\vee\eps_2,T]$. Then for each $n\geq 1$
$$
\pi_{\eps_1}f_n(\cdot,t)=\pi_{\eps_2}f_n(\cdot,t).
$$
Since $\pi_{\eps_i}f_n(\cdot,t)\to g_{\eps_i}(\cdot,t)$ in $D([0,1],\R)$, $i=1,2$, $g_{\eps_1}(\cdot,t)=g_{\eps_2}(\cdot,t)$. So, $f$ is well defined. Note that for each $\eps>0$, $\pi_{\eps}f=g_{\eps}$ so by Lemma~\ref{lemma_funct_f}, $f\in D([0,1],C(0,T])$.

Next, fix $\delta>0$ and take $p\in\N$ such that $\frac{1}{2^p}<\frac{\delta}{2}$. Since
$$
d_{\frac{1}{p}}\left(\pi_{\frac{1}{p}}f_n,\pi_{\frac{1}{p}}f\right)\to 0,\quad\mbox{as}\ n\to\infty,
$$
there exist a sequence $\{\lambda_n\}_{n\geq 1}\subset\Lambda$ and a number $N\in\N$ such that for each $n\geq N$
\begin{equation}\label{f_lambda}
\gamma(\lambda_n)<\frac{\delta}{2}
\end{equation}
and
$$
\sup\limits_{u\in[0,1]}\rho_{\frac{1}{p}}\left(\pi_{\frac{1}{p}}f_n(\lambda_n(u),\cdot),\pi_{\frac{1}{p}}f(u,\cdot)\right)<\frac{\delta}{2}.
$$
Note that
$$
\sup\limits_{u\in[0,1]}\rho_{\frac{1}{k}}\left(\pi_{\frac{1}{k}}f_n(\lambda_n(u),\cdot),\pi_{\frac{1}{k}}f(u,\cdot)\right)<\frac{\delta}{2},\quad k\in[p],\ \ n\geq N.
$$
Hence for all $u\in[0,1]$
$$
\sum_{k=1}^p\frac{1}{2^k}\rho_{\frac{1}{k}}\left(\pi_{\frac{1}{k}}f_n(\lambda_n(u),\cdot),\pi_{\frac{1}{k}}f(u,\cdot)\right)<\frac{\delta}{2},\quad n\geq N.
$$
By the choice of $p$ and the latter inequality, for each $n\geq N$ and $u\in[0,1]$
$$
\rho(f_n(\lambda_n(u),\cdot),f(u,\cdot))<\delta.
$$
This and~\eqref{f_lambda} imply the inequality
$$
d_0(f_n,f)\leq\delta,\quad n\geq N.
$$
The lemma is proved.
\end{proof}

\begin{prp}\label{prop_tightness_apend}
A set $\{P_n\}_{n\geq 1}$ of probability measures in $D([0,1],C(0,T])$ is tight if and only if for every $\eps>0$ the set $\{P_n\circ\pi_{\eps}^{-1}\}_{n\geq 1}$ is tight in $D([0,1],C[\eps,T])$.
\end{prp}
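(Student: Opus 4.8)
The plan is to lean entirely on the two structural lemmas just established --- Lemma~\ref{lemma_funct_f}, which identifies $D([0,1],C(0,T])$ with the family of restrictions $\pi_\eps f$, and Lemma~\ref{lemma_convergence_in_D}, which does the same for convergence --- together with the Lipschitz bound $d_\eps(\pi_\eps f,\pi_\eps g)\le C\,d_0(f,g)$ recorded inside the proof of the latter. That bound says exactly that each restriction $\pi_\eps\colon D([0,1],C(0,T])\to D([0,1],C[\eps,T])$ is continuous, which disposes of the easy direction and supplies most of the machinery for the hard one.

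For the forward implication: given that $\{P_n\}$ is tight, fix $\eps>0$ and $\delta>0$, choose a compact $K\subseteq D([0,1],C(0,T])$ with $\inf_n P_n(K)\ge 1-\delta$, and note that $\pi_\eps(K)$ is compact in $D([0,1],C[\eps,T])$ since $\pi_\eps$ is continuous. Because $K\subseteq\pi_\eps^{-1}(\pi_\eps(K))$, we get $(P_n\circ\pi_\eps^{-1})(\pi_\eps(K))\ge P_n(K)\ge1-\delta$ for all $n$, i.e.\ $\{P_n\circ\pi_\eps^{-1}\}$ is tight.

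The converse is the substantive direction. Fix $\delta>0$ and a sequence $\eps_j\downarrow0$ in $(0,T]$; for each $j$ the hypothesis at $\eps=\eps_j$ yields a compact $K_j\subseteq D([0,1],C[\eps_j,T])$ with $(P_n\circ\pi_{\eps_j}^{-1})(K_j^c)\le\delta\,2^{-j}$ for every $n$. Put $K=\bigcap_{j\ge1}\pi_{\eps_j}^{-1}(K_j)$; this set is closed, and a union bound over $j$ gives $P_n(K)\ge1-\delta$ for all $n$, so everything reduces to showing that $K$ is compact in $D([0,1],C(0,T])$. I would prove this by a diagonal argument: starting from an arbitrary sequence $\{f_m\}\subseteq K$, for each fixed $j$ the sequence $\{\pi_{\eps_j}f_m\}_m$ lies in the compact set $K_j$, so one can successively refine to subsequences along which $\pi_{\eps_j}f_m$ converges in $D([0,1],C[\eps_j,T])$ for $j=1,2,\dots$, and then pass to the diagonal subsequence $\{f_{m_l}\}$. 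Since the further restriction maps $D([0,1],C[\eps_j,T])\to D([0,1],C[\eps,T])$ for $\eps\ge\eps_j$ are also ($1$-Lipschitz, hence continuous), and since $\eps_j\downarrow0$, this produces convergence of $\pi_\eps f_{m_l}$ in $D([0,1],C[\eps,T])$ for \emph{every} $\eps>0$, with mutually consistent limits. Lemma~\ref{lemma_convergence_in_D}(ii) then delivers a limit $f\in D([0,1],C(0,T])$ of $\{f_{m_l}\}$, and since each $K_j$ is closed we get $\pi_{\eps_j}f\in K_j$, i.e.\ $f\in K$. Hence $K$ is sequentially compact, therefore compact, and tightness of $\{P_n\}$ follows.

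The only point needing genuine care is this last step of the converse: converting convergence of the countably many restrictions $\pi_{\eps_j}f_{m_l}$ into convergence of \emph{all} restrictions $\pi_\eps f_{m_l}$ and then into an honest limit inside $D([0,1],C(0,T])$. This is precisely what Lemma~\ref{lemma_convergence_in_D}(ii) is designed to handle once the consistency of the partial limits has been checked, and the remaining ingredients --- continuity of the $\pi_\eps$, closedness of the $K_j$, and the union bound --- are routine.
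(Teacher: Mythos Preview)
Your proposal is correct and follows essentially the same route as the paper: continuity of $\pi_\eps$ for the forward direction, and for the converse the intersection $K=\bigcap_j\pi_{\eps_j}^{-1}(K_j)$ with a diagonal-subsequence argument plus Lemma~\ref{lemma_convergence_in_D}(ii) to establish compactness, followed by a union bound. The only difference is cosmetic: you spell out the passage from convergence at the countable family $\{\eps_j\}$ to all $\eps>0$ and the fact that the limit lies in $K$, whereas the paper leaves these implicit in its appeal to Lemma~\ref{lemma_convergence_in_D}(ii).
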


\begin{proof}
The necessity follows from the continuity of the map $$\pi_{\eps}:D([0,1],C(0,T])\to D([0,1],C[\eps,T])$$ (see Lemma~\ref{lemma_convergence_in_D}~$(i)$). Let us prove the sufficiency. Let $\eps_q=\frac{1}{q}$ and $K_q$ be a compact set in $D([0,1],C[\eps_q,T])$, $q\in\N$. Show that
\begin{equation}\label{f_compact}
K=\bigcap\limits_{q=1}^{\infty}\pi_{\eps_q}^{-1}(K_q)
\end{equation}
is compact in $D([0,1],C(0,T])$. Since $K$ is closed and $D([0,1],C(0,T])$ is a metric space, to prove the compactness of $K$, it suffices to show that every sequence of elements of $K$ has a convergent subsequence. So, let $\{f_n\}_{n\geq 1}\subseteq K$. Note that $\{\pi_{\eps_q}f_n\}_{n\geq 1}\subseteq K_q$, for each $q\in\N$. Hence $\{\pi_{\eps_q}f_n\}_{n\geq 1}$ has a convergent subsequence.
By Cantor's diagonal argument, we can choose a sequence $\{n'\}\subseteq\N$ such that for all $q\in\N$ the sequence $\{\pi_{\eps_q}f_{n'}\}_{n'}$ tends to some function $g_q$ in $D([0,1],C[\eps_q,T])$. By Lemma~\ref{lemma_convergence_in_D}~$(ii)$, the sequence $\{f_{n'}\}_{n'}$ is convergent in $D([0,1],C(0,T])$. Thus, we obtain that $K$ is compact.

Let $\delta>0$ be fixed. By the tightness of $\{P_n\circ\pi_{\eps}^{-1}\}_{n\geq 1}$ we can take a compact set $K_q$ in $D([0,1],C[\eps_q,T])$ such that
$$
\inf\limits_{n\geq 1}P_n(\pi_{\eps}^{-1}(K_q))\geq 1-\frac{\delta}{2^q},\quad q\geq 1.
$$
Defining $K$ by~\eqref{f_compact} and using the standard argument we have the estimation
$$
P_n(K)\geq 1-\delta,\quad n\geq 1.
$$
It completes the proof of the proposition.
\end{proof}

\section{A special integral for a stochastic coalescing flow}\label{app_integral}

In this section we recall the construction of an integral with respect to a stochastic flow that was defined by A.~A.~Dorogovtsev and state the fact used in the proof of Theorem~\ref{theorem_local_time}. Let $\{y(u,t),\ u\in[0,1],\ t\in[0,T]\}$ be a random element of $D([0,1],C[0,T])$ which satisfies conditions $(C1)$--$(C5)$. For a set of points $\{u_k,\ k\in\N\}\subset [0,1]$ denote
\begin{align*}
\tau(u_1)&=T,\\
\tau(u_k)&=\inf\{t:\ \exists l\in[k-1]\ y(u_l,t)=y(u_k,t)\}\wedge T,\quad k=2,3,\ldots.
\end{align*}

\begin{lem}\label{lemma_conv_int_sum}
{\cob Let $\{u_k,\ k\in\N\}$ be dense in $[0,1]$. Then for each continuous bounded function $\varphi:\R\to\R$ and $t\in[0,T]$ the series $\sum_{k=1}^{+\infty}\int_0^{\tau(u_k)\wedge t}\varphi(y(u_k,s))ds$ converges almost surely. Moreover its sum is independent of the choice of $\{u_k,\ k\in\N\}$ and equals $\int_0^1\int_0^t\frac{\varphi(y(u,s))}{m(u,s)}dsdu$.}
\end{lem}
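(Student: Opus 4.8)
The plan is to evaluate the series and the double integral $\int_0^1\int_0^t\frac{\varphi(y(u,s))}{m(u,s)}\,ds\,du$ separately and to see that both equal $\int_0^t\big(\sum_{i=1}^{N(s)}\varphi(a_i(s))\big)\,ds$, where for fixed $s\in(0,T]$ I write $a_1(s)<\dots<a_{N(s)}(s)$ for the distinct values of $y(\cdot,s)$. By Corollary~\ref{coroll_step_function} and Lemma~\ref{lemma_step_funct}, a.s.\ for every $s\in(0,T]$ the map $y(\cdot,s)$ is a step function with finitely many values, its level sets $C_i(s)=\{u:\ y(u,s)=a_i(s)\}$ are (by monotonicity) subintervals partitioning $[0,1]$, and the coalescing property of Lemma~\ref{lemma_step_funct} (once coalesced, always coalesced) gives $\{v:\ \exists r\le s\ y(v,r)=y(u,r)\}=C_i(s)$ whenever $u\in C_i(s)$; hence $m(u,s)=\Leb(C_i(s))$. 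Granting moreover that every $C_i(s)$ has positive length (discussed in the last paragraph), we obtain that a.s.\ for all $s\in(0,T]$
$$
\int_0^1\frac{\varphi(y(u,s))}{m(u,s)}\,du=\sum_{i=1}^{N(s)}\varphi(a_i(s)),\qquad \int_0^1\frac{du}{m(u,s)}=N(s).
$$

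Next I fix a dense set $\{u_k\}\subset[0,1]$ and identify the terms of the series. Using Lemma~\ref{lemma_step_funct} once more, for $s\in(0,T)$ the event $\{\tau(u_k)>s\}$ coincides with the event that the cluster $C_i(s)$ containing $u_k$ contains no $u_l$ with $l<k$. Since $\{u_k\}$ is dense and every cluster has positive length, each $C_i(s)$ contains some $u_j$, and exactly one index in $C_i(s)$ --- the smallest one --- satisfies $\tau(u_k)>s$. Consequently, a.s.\ for all $s\in(0,T)$,
$$
\sum_{k=1}^{\infty}\I_{\{s<\tau(u_k)\}}\varphi(y(u_k,s))=\sum_{i=1}^{N(s)}\varphi(a_i(s)),\qquad \sum_{k=1}^{\infty}\I_{\{s<\tau(u_k)\}}=N(s).
$$

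Now I assemble the pieces. By Lemma~\ref{lemma_estim_expect_char}, $\E\int_0^t\int_0^1\frac{du\,ds}{m(u,s)}<\infty$, so a.s.\ $\int_0^tN(s)\,ds=\int_0^t\int_0^1\frac{du\,ds}{m(u,s)}<\infty$; therefore all the integrals and series below converge absolutely and Fubini--Tonelli applies (Lebesgue measure on $(0,t)$ and, for the series, counting measure on $\N$). Because $\int_0^{\tau(u_k)\wedge t}\varphi(y(u_k,s))\,ds=\int_0^t\I_{\{s<\tau(u_k)\}}\varphi(y(u_k,s))\,ds$ and $\sum_k\int_0^t\I_{\{s<\tau(u_k)\}}|\varphi(y(u_k,s))|\,ds\le\|\varphi\|\int_0^tN(s)\,ds<\infty$, the series in the statement converges absolutely a.s., and
$$
\sum_{k=1}^{\infty}\int_0^{\tau(u_k)\wedge t}\varphi(y(u_k,s))\,ds=\int_0^t\sum_{i=1}^{N(s)}\varphi(a_i(s))\,ds=\int_0^1\int_0^t\frac{\varphi(y(u,s))}{m(u,s)}\,ds\,du,
$$
where the first equality uses the previous paragraph and the second uses the first paragraph together with Fubini. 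Since the last two members do not involve $\{u_k\}$, the sum is also independent of the choice of the dense sequence.

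The step I expect to be the main obstacle is the claim, used twice above, that a.s.\ every level set $C_i(s)$ ($s\in(0,T]$) has positive Lebesgue measure, i.e.\ $m(u,s)>0$ for all $u\in[0,1]$ and $s>0$. A Lebesgue-null level set is necessarily a single point $\{u_0\}$, and the right-continuity of $y(\cdot,s)\in D([0,1],\R)$ forces $u_0=1$; but then $m(1,s)=\Leb(C_{N(s)}(s))$, while $\int_0^T\frac{ds}{m(1,s)}<\infty$ a.s.\ by Lemma~\ref{lemma_estim_expect_char} (this is $\langle y(1,\cdot)\rangle_T$), so $m(1,s)>0$ for a.e.\ $s$ and, as $s\mapsto m(1,s)$ is non-decreasing, for every $s\in(0,T]$ --- a contradiction. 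This is the only place where degeneracy could occur; without it the top cluster $\{1\}$ would contribute a spurious term to the series exactly when $1\in\{u_k\}$, so both the asserted identity and its independence of $\{u_k\}$ would fail.
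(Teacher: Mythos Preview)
Your proof is correct and is organized somewhat differently from the paper's. Both arguments ultimately rest on the same cluster picture: for each $s\in(0,T]$ the level sets $C_i(s)$ partition $[0,1]$, $m(u,s)=\Leb(C_i(s))$ on $C_i(s)$, and in each cluster exactly the $u_k$ of smallest index has $\tau(u_k)>s$. The difference is in how convergence and the identity are established. The paper first proves absolute convergence of the series by reordering $\{u_1,\dots,u_n\}$ increasingly and invoking an \emph{external} estimate (Lemma~7.1.1 of \cite{Dorogovtsev:2007:en}) on $\sum_k\E\widetilde{\tau}(u_k^n)$; it then proves the identity on $[\eps,t]$ (where $N(\eps)<\infty$ suffices to match the finite partial sum with the $du\,ds$-integral) and passes to the limit $\eps\to0$ via monotone convergence for $\varphi\geq0$, followed by splitting a general $\varphi$ into positive and negative parts. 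You instead obtain convergence and the identity in a single stroke: from Lemma~\ref{lemma_estim_expect_char} you get $\int_0^t N(s)\,ds<\infty$ a.s., which simultaneously gives absolute summability of the series and the Fubini--Tonelli hypothesis for both the $du\,ds$-integral and the sum--integral interchange. This is more self-contained (no call to \cite{Dorogovtsev:2007:en}) and shorter; the paper's route, on the other hand, makes the role of the finite truncation at time $\eps$ very explicit.

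Two small remarks. First, in this appendix only properties $(C1)$--$(C5)$ are assumed, so Lemma~\ref{lemma_step_funct} is the relevant reference for ``$y(\cdot,s)$ is a step function''; the citation of Corollary~\ref{coroll_step_function} (which was proved via the finite approximation) is redundant here. Second, your handling of the positivity $m(u,s)>0$ is essentially what the paper also does: it includes $\{m(u,t)>0,\ u\in[0,1],\ t\in(0,T]\}$ in its full-measure set $\Omega'$ and justifies this via the finiteness of $\E\int_0^1\int_0^t\frac{ds\,du}{m(u,s)}$ together with the monotonicity of $m(u,\cdot)$; your reduction to the single point $u_0=1$ via right-continuity is a nice sharpening of that step.
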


\begin{proof}
To prove the lemma first show that the series is absolutely convergent a.s. For each $n\in\N$ let
$$
S_n=\sum_{k=1}^{n}\left|\int_0^{\tau(u_k)\wedge t}\varphi(y(u_k,s))ds\right|
$$
and $\{u_k^n,\ k\in[n]\}$ be the set of $\{u_k,\ k\in[n]\}$ ordered by increasing. Set
\begin{align*}
\widetilde{\tau}(u_1^n)&=T,\\
\widetilde{\tau}(u_k^n)&=\inf\{t:\ y(u_{k-1}^n,t)=y(u_k^n,t)\}\wedge T,\quad k=2,\ldots,n.
\end{align*}
Then
$$
S_n=\sum_{k=1}^{n}\left|\int_0^{\widetilde{\tau}(u_k^n)\wedge t}\varphi(y(u_k^n,s))ds\right|
$$
and hence
$$
\E S_n=\sum_{k=1}^{n}\E\left|\int_0^{\widetilde{\tau}(u_k^n)\wedge t}\varphi(y(u_k^n,s))ds\right|\leq \|\varphi\|\sum_{k=1}^{n}\E(\widetilde{\tau}(u_k^n)\wedge t).
$$
By Lemma~7.1.1~\cite{Dorogovtsev:2007:en}, there exists a constant $C$ such that for all $n\in\N$
$
\E S_n\leq C.
$
So, the sequence $\{S_n\}_{n\geq 1}$ is convergent a.s. and consequently
$$
\sum_{k=1}^{+\infty}\int_0^{\tau(u_k)\wedge t}\varphi(y(u_k,s))ds
$$
converges a.s.

In order to prove the second part of the lemma set
\begin{align*}
\Omega'&=\{y\in D([0,1],C[0,T])\}\cap\{y(u,t)\leq y(v,t),\ u<v,\ t\in[0,T]\}\\
&\cap\{y\ \mbox{is a step function}\}\cap\{m(u,t)>0,\ u\in[0,1], t\in(0,T]\}\\
&\cap\{\mbox{for all}\ u,v\in[0,1]\ \mbox{if}\ y(u,t)=y(v,t)\ \mbox{then}\ y(u,t+\cdot)=y(v,t+\cdot)\}.
\end{align*}
By Theorem~\ref{theorem_main_result}, Lemmas~\ref{lemma_expect_of_y2},~\ref{lemma_step_funct}, Remark~\ref{remark_prop_y}, taking into account the relation
$
\E\int_0^1\int_0^t\frac{1}{m(u,s)}dsdu=\int_0^1\E(y(u,t)-u)^2du
$
and the monotonicity of $m(u,t),\ t\in[0,T]$, for all $u\in[0,1]$, we conclude that $\p\{\Omega'\}=1$.

Next, fix $\omega\in\Omega'$ and denote for $s\in(0,t]$
$$
A(u,s)=\{v\in[0,1]:\ \exists r\leq s\ y(u,r,\omega)=y(v,r,\omega)\}
$$
and
$
B(s)=\{\min A(u,s),\ u\in[0,1]\}.
$
From the choice of $\omega$ it easily {\cob follows that $B(s)$ is finite. Since $\{u_k,\ k\in\N\}$ is dense} in $[0,1]$ and $\inter A(u,s)\neq\emptyset$ (because $m(u,s,\omega)>0$), $u\in[0,1]$, there exists $N(s)\in\N$ such that $A(u,s)\cap\{u_k,\ k\in[N(s)]\}\neq\emptyset$, for all $u\in[0,1]$.

Let $\varepsilon<t$. Then for all $n\geq N(\varepsilon)$,
\begin{align*}
&\int_0^1\int_{\varepsilon}^t\frac{\varphi(y(u,s,\omega))}{m(u,s,\omega)}duds=\int_{\varepsilon}^t\sum_{v\in B(s)} \int_{A(v,s)}\frac{\varphi(y(u,s,\omega))}{m(u,s,\omega)}duds\\
&=\int_{\varepsilon}^t\sum_{v\in B(s)}\varphi(y(v,s,\omega))ds=\sum_{k=1}^n\int_{\varepsilon}^{(\varepsilon\vee\tau(u_k,\omega))\wedge t}\varphi(y(u_k,s,\omega))ds.
\end{align*}

So, we obtain that for all $\omega\in\Omega'$ and $\eps<t$
$$
\int_0^1\int_{\varepsilon}^t\frac{\varphi(y(u,s,\omega))}{m(u,s,\omega)}duds= \sum_{k=1}^{+\infty}\int_{\varepsilon}^{(\varepsilon\vee\tau(u_k,\omega))\wedge t}\varphi(y(u_k,s,\omega))ds.
$$
{\col
Next, if $\varphi$ is non-negative, then by the monotone convergence theorem
$$
\int_0^1\int_0^t\frac{\varphi(y(u,s,\omega))}{m(u,s,\omega)}duds= \sum_{k=1}^{+\infty}\int_0^{\tau(u_k,\omega)\wedge t}\varphi(y(u_k,s,\omega))ds,
$$
where the integral in the left hand side is finite a.s., by Lemma~\ref{lemma_estim_expect_char}. To get the statement for any continuous bounded function $\varphi$, it is needed to apply the obtained result to its positive and negative parts.
}
This completes the proof.
\end{proof}

Lemma~\ref{lemma_conv_int_sum} implies that the sum $\sum_{k=1}^{+\infty}\int_0^{\tau(u_k)\wedge t}\varphi(y(u_k,s))ds$ does not depend on the choice of a dense set $\{u_k,\ k\in\N\}$. So we set
$$
\int_0^1\int_0^{\tau(u)\wedge t}\varphi(y(u,s))ds=\sum_{k=1}^{+\infty}\int_0^{\tau(u_k)\wedge t}\varphi(y(u_k,s))ds
$$

\begin{cor}\label{lemma_integral}
Let $\varphi$ be a bounded continuous function. Then a.s.
$$
\int_0^1\int_0^t\frac{\varphi(y(u,s))}{m(u,s)}dsdu=\int_0^1\int_0^{\tau(u)\wedge t}\varphi(y(u,s))ds.
$$
\end{cor}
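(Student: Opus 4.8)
The plan is to read this off directly from Lemma~\ref{lemma_conv_int_sum} together with the definition introduced immediately before the corollary. Recall that, for a fixed countable dense set $\{u_k,\ k\in\N\}\subset[0,1]$, the object on the right-hand side was \emph{defined} by
$$
\int_0^1\int_0^{\tau(u)\wedge t}\varphi(y(u,s))ds:=\sum_{k=1}^{+\infty}\int_0^{\tau(u_k)\wedge t}\varphi(y(u_k,s))ds,
$$
this series being almost surely absolutely convergent and independent of the choice of the dense set by Lemma~\ref{lemma_conv_int_sum}. The last assertion of that same lemma identifies the series with $\int_0^1\int_0^t\frac{\varphi(y(u,s))}{m(u,s)}dsdu$. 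Hence the claimed equality holds a.s., and the corollary is nothing more than a restatement of Lemma~\ref{lemma_conv_int_sum} in the notation set up afterwards. So the ``proof'' is just: invoke Lemma~\ref{lemma_conv_int_sum}.

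If one wants a self-contained derivation, the steps are precisely those of the proof of Lemma~\ref{lemma_conv_int_sum}. First I would work on the full-measure event $\Omega'$ on which $y\in D([0,1],C[0,T])$, $y(\cdot,s)$ is a nondecreasing step function, $m(u,s)>0$ for $s>0$, and coalesced particles stay together (this event has probability $1$ by Lemma~\ref{lemma_step_funct}, Lemma~\ref{lemma_expect_of_y2} and Remark~\ref{remark_prop_y}). Then, for each $s\in(0,t]$, decompose $[0,1]$ into the finitely many clusters $A(v,s)$, $v\in B(s)$, on which both $y(\cdot,s)$ and $m(\cdot,s)$ are constant, so that $\int_0^1\frac{\varphi(y(u,s))}{m(u,s)}du=\sum_{v\in B(s)}\varphi(y(v,s))$; next match each cluster with the smallest-indexed $u_k$ it contains, noting that $\tau(u_k)\le s$ exactly when that cluster has already formed; finally integrate over $s\in[\varepsilon,t]$ and let $\varepsilon\to0$, using the integrability bound $\E\int_0^1\int_0^t\frac{ds\,du}{m(u,s)}\le C\sqrt t$ from Lemma~\ref{lemma_estim_expect_char} and monotone/dominated convergence. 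The only genuinely delicate point — already handled inside Lemma~\ref{lemma_conv_int_sum} — is reducing a general bounded continuous $\varphi$ to its non-negative parts $\varphi^+$, $\varphi^-$ so that the monotone convergence theorem applies before the two contributions are recombined. Since all of this is contained in Lemma~\ref{lemma_conv_int_sum}, I would write the proof of the corollary in one line, citing that lemma and the definition.
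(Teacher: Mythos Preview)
Your reading is correct: the corollary is an immediate restatement of Lemma~\ref{lemma_conv_int_sum} in the notation introduced right before it, and the paper accordingly states it without proof. Your optional self-contained sketch also matches the argument given for Lemma~\ref{lemma_conv_int_sum}.
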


\textbf{Acknowledgements.} The author is thankful to Prof.~Dr.~A.~A.~Dorogovtsev for the statement of the problem and for help during this work. The author also is grateful to Max von Renesse and Florent Barret for useful discussions and suggestions. {\col Special thanks go to the anonymous referee for helpful suggestions and comments.}


\begin{thebibliography}{42}

\bibitem[\protect\citeauthoryear{Arratia}{1979}]{Arratia:1979}
\begin{barticle}[author]
\bauthor{\bsnm{Arratia},~\bfnm{R.~A.}\binits{R.~A.}}
(\byear{1979}).
\btitle{Brownian motion on the line}.
\bjournal{PhD dissertation, Univ. Wiskonsin, Madison}.
\bmrnumber{2630231}
\end{barticle}
\endbibitem

\bibitem[\protect\citeauthoryear{Arratia}{1981}]{Arratia:1981}
\begin{barticle}[author]
\bauthor{\bsnm{Arratia},~\bfnm{R.~A.}\binits{R.~A.}}
(\byear{1981}).
\btitle{Coalescing Brownian motion and the Voter model on $\mathbb{Z}$}.
\bjournal{Unpublished partial manuscript, Available from
  rarratia@math.usc.edu}.
\end{barticle}
\endbibitem

\bibitem[\protect\citeauthoryear{Billingsley}{1999}]{Billingsley:1999}
\begin{bbook}[author]
\bauthor{\bsnm{Billingsley},~\bfnm{P.}\binits{P.}}
(\byear{1999}).
\btitle{Convergence of probability measures},
\bedition{2nd} ed.
\bpublisher{Wiley}, \baddress{New York}.
\bmrnumber{1700749}
\end{bbook}
\endbibitem

\bibitem[\protect\citeauthoryear{Chernega}{2012}]{Chernega:2012}
\begin{barticle}[author]
\bauthor{\bsnm{Chernega},~\bfnm{P.~P.}\binits{P.~P.}}
(\byear{2012}).
\btitle{Local time at zero for Arratia flow}.
\bjournal{Ukrainian Math. J.}
\bvolume{64}
\bpages{616-633}.
\bmrnumber{3104786}
\end{barticle}
\endbibitem

\bibitem[\protect\citeauthoryear{Dawson}{1993}]{Dawson:1993}
\begin{bbook}[author]
\bauthor{\bsnm{Dawson},~\bfnm{D.~A.}\binits{D.~A.}}
(\byear{1993}).
\btitle{Measure-valued Markov processes}.
\bpublisher{Ecole d'ete de probabilites SaintFlour XXI. Lecture Notes in Math}.
\bmrnumber{1242575}
\end{bbook}
\endbibitem

\bibitem[\protect\citeauthoryear{Dawson, Li and Wang}{2001}]{Dawson:2001}
\begin{barticle}[author]
\bauthor{\bsnm{Dawson},~\bfnm{D.~A.}\binits{D.~A.}},
  \bauthor{\bsnm{Li},~\bfnm{Z.~H.}\binits{Z.~H.}} \AND
  \bauthor{\bsnm{Wang},~\bfnm{H.}\binits{H.}}
(\byear{2001}).
\btitle{Superprocesses with dependent spatial motion and general branching
  densities}.
\bjournal{Electron. J. Probab.}
\bvolume{6}
\bpages{1-33}.
\bmrnumber{1873302}
\end{barticle}
\endbibitem

\bibitem[\protect\citeauthoryear{Dawson, Li and Zhou}{2004}]{Dawson:2004}
\begin{barticle}[author]
\bauthor{\bsnm{Dawson},~\bfnm{D.~A.}\binits{D.~A.}},
  \bauthor{\bsnm{Li},~\bfnm{Z.~H.}\binits{Z.~H.}} \AND
  \bauthor{\bsnm{Zhou},~\bfnm{X.}\binits{X.}}
(\byear{2004}).
\btitle{Superprocesses with coalescing brownian spatial motion as large-scale
  limits}.
\bjournal{J. Theoret. Probab.}
\bvolume{17}
\bpages{673-692}.
\bmrnumber{2091555}
\end{barticle}
\endbibitem

\bibitem[\protect\citeauthoryear{Dorogovtsev}{2002}]{Dorogovtsev:2002}
\begin{barticle}[author]
\bauthor{\bsnm{Dorogovtsev},~\bfnm{A.~A.}\binits{A.~A.}}
(\byear{2002}).
\btitle{Measure-valued Markov processes and stochastic flows on abstract
  spaces}.
\bjournal{Stoch. Stoch. Rep.}
\bvolume{76}
\bpages{395-407}.
\bmrnumber{2096728}
\end{barticle}
\endbibitem

\bibitem[\protect\citeauthoryear{Dorogovtsev}{2003}]{Dorogovtsev:2003}
\begin{barticle}[author]
\bauthor{\bsnm{Dorogovtsev},~\bfnm{A.~A.}\binits{A.~A.}}
(\byear{2003}).
\btitle{Stochastic flows with interaction and measure-valued processes}.
\bjournal{Int. J. Math. Math. Sci.}
\bvolume{63}
\bpages{3963-3977}.
\bmrnumber{2030388}
\end{barticle}
\endbibitem

\bibitem[\protect\citeauthoryear{Dorogovtsev}{2004}]{Dorogovtsev:2004}
\begin{barticle}[author]
\bauthor{\bsnm{Dorogovtsev},~\bfnm{A.~A.}\binits{A.~A.}}
(\byear{2004}).
\btitle{One Brownian stochastic flow}.
\bjournal{Theory Stoch. Process.}
\bvolume{10(26)}
\bpages{21-25}.
\bmrnumber{2329772}
\end{barticle}
\endbibitem

\bibitem[\protect\citeauthoryear{Dorogovtsev}{2005}]{Dorogovtsev:2005}
\begin{barticle}[author]
\bauthor{\bsnm{Dorogovtsev},~\bfnm{A.~A.}\binits{A.~A.}}
(\byear{2005}).
\btitle{Some remarks on a Wiener flow with coalescence}.
\bjournal{Ukrainian Math. J.}
\bvolume{57}
\bpages{1550-1558}.
\bmrnumber{2219765}
\end{barticle}
\endbibitem

\bibitem[\protect\citeauthoryear{Dorogovtsev}{2006}]{Dorogovtsev:2006}
\begin{barticle}[author]
\bauthor{\bsnm{Dorogovtsev},~\bfnm{A.~A.}\binits{A.~A.}}
(\byear{2006}).
\btitle{A stochastic integral with respect to the Arratia flow}.
\bjournal{Dokl. Akad. Nauk}
\bvolume{410}
\bpages{156-157}.
\bnote{(in Russian)}.
\bmrnumber{2455372}
\end{barticle}
\endbibitem

\bibitem[\protect\citeauthoryear{Dorogovtsev}{2007}]{Dorogovtsev:2007:en}
\begin{bbook}[author]
\bauthor{\bsnm{Dorogovtsev},~\bfnm{A.~A.}\binits{A.~A.}}
(\byear{2007}).
\btitle{Measure-valued processes and stochastic flows}.
\bpublisher{Institute of Mathematics of NAS of Ukraine}, \baddress{Kiev}.
\bnote{(in Russion)}.
\bmrnumber{2375817}
\end{bbook}
\endbibitem

\bibitem[\protect\citeauthoryear{Dorogovtsev, Gnedin and
  Vovchanskii}{2012}]{Vovchanskii:2012}
\begin{barticle}[author]
\bauthor{\bsnm{Dorogovtsev},~\bfnm{A.~A.}\binits{A.~A.}},
  \bauthor{\bsnm{Gnedin},~\bfnm{A.~V.}\binits{A.~V.}} \AND
  \bauthor{\bsnm{Vovchanskii},~\bfnm{M.~B.}\binits{M.~B.}}
(\byear{2012}).
\btitle{Iterated logarithm law for size of clusters in Arratia flow}.
\bjournal{Theory Stoch. Process.}
\bvolume{18(34)}
\bpages{1-7}.
\bmrnumber{3124769}
\end{barticle}
\endbibitem

\bibitem[\protect\citeauthoryear{Dorogovtsev and
  Kotelenez}{1997}]{Kotelenez:1997}
\begin{bbook}[author]
\bauthor{\bsnm{Dorogovtsev},~\bfnm{A.~A.}\binits{A.~A.}} \AND
  \bauthor{\bsnm{Kotelenez},~\bfnm{P.}\binits{P.}}
(\byear{1997}).
\btitle{Smooth stationary solutions of quasilinear stochastic differential
  equations. Finite mass. Preprint No. 97}.
\bpublisher{Department of Mathematics Case Western Reserv University
  Cleveland}, \baddress{Ohio}.
\end{bbook}
\endbibitem

\bibitem[\protect\citeauthoryear{E, Rykov and Sinai}{1996}]{Weinan:1996}
\begin{barticle}[author]
\bauthor{\bsnm{E},~\bfnm{W.}\binits{W.}},
  \bauthor{\bsnm{Rykov},~\bfnm{Yu.~G.}\binits{Y.~G.}} \AND
  \bauthor{\bsnm{Sinai},~\bfnm{Ya.~G.}\binits{Y.~G.}}
(\byear{1996}).
\btitle{Generalized Variational Principles, Global Weak Solutions and Behavior
  with Random Initial Data for Systems of Conservation Laws Arising in Adhesion
  Particle Dynamics}.
\bjournal{Comm. Math. Phys.}
\bvolume{177}
\bpages{349-380}.
\bmrnumber{1384139}
\end{barticle}
\endbibitem

\bibitem[\protect\citeauthoryear{Ethier and Kurtz}{1986}]{Ethier:1986}
\begin{bbook}[author]
\bauthor{\bsnm{Ethier},~\bfnm{S.~N.}\binits{S.~N.}} \AND
  \bauthor{\bsnm{Kurtz},~\bfnm{T.~G.}\binits{T.~G.}}
(\byear{1986}).
\btitle{Markov processes: Characterization and convergence}.
\bpublisher{Wiley}, \baddress{New York}.
\bmrnumber{0838085}
\end{bbook}
\endbibitem

\bibitem[\protect\citeauthoryear{Evans, Morris and Sen}{2013}]{Evans:2013}
\begin{barticle}[author]
\bauthor{\bsnm{Evans},~\bfnm{S.~S.}\binits{S.~S.}},
  \bauthor{\bsnm{Morris},~\bfnm{B.}\binits{B.}} \AND
  \bauthor{\bsnm{Sen},~\bfnm{A.}\binits{A.}}
(\byear{2013}).
\btitle{Coalescing systems of non-brownian particles}.
\bjournal{Probab. Theory Related Fields}
\bvolume{156}
\bpages{307-342}.
\bmrnumber{3055261}
\end{barticle}
\endbibitem

\bibitem[\protect\citeauthoryear{Fleming and Viot}{1979}]{Fleming:1979}
\begin{barticle}[author]
\bauthor{\bsnm{Fleming},~\bfnm{W.~H.}\binits{W.~H.}} \AND
  \bauthor{\bsnm{Viot},~\bfnm{M.}\binits{M.}}
(\byear{1979}).
\btitle{Some measure-valued Markov processes in population genetics theory}.
\bjournal{Indiana Univ. Math. J.}
\bvolume{28}
\bpages{817-843}.
\bmrnumber{0542340}
\end{barticle}
\endbibitem

\bibitem[\protect\citeauthoryear{Gorostiza, Roelly and
  Wakolbinger}{1991}]{Gorostiza:1991}
\begin{barticle}[author]
\bauthor{\bsnm{Gorostiza},~\bfnm{L.~G.}\binits{L.~G.}},
  \bauthor{\bsnm{Roelly},~\bfnm{S.}\binits{S.}} \AND
  \bauthor{\bsnm{Wakolbinger},~\bfnm{A.}\binits{A.}}
(\byear{1991}).
\btitle{Persistence criteria for a class of critical branching particle systems
  in continuous time}.
\bjournal{Ann. Probab.}
\bvolume{19}
\bpages{266-288}.
\bmrnumber{1085336}
\end{barticle}
\endbibitem

\bibitem[\protect\citeauthoryear{Gorostiza and
  Roelly-Coppoletta}{1990}]{Gorostiza:1990}
\begin{barticle}[author]
\bauthor{\bsnm{Gorostiza},~\bfnm{L.~G.}\binits{L.~G.}} \AND
  \bauthor{\bsnm{Roelly-Coppoletta},~\bfnm{S.}\binits{S.}}
(\byear{1990}).
\btitle{Some properties of the multitype measure branching process}.
\bjournal{Stochastic Process. Appl.}
\bvolume{37}
\bpages{259-274}.
\bmrnumber{1102873}
\end{barticle}
\endbibitem

\bibitem[\protect\citeauthoryear{Ikeda and Watanabe}{1981}]{Watanabe:1981:en}
\begin{bbook}[author]
\bauthor{\bsnm{Ikeda},~\bfnm{N.}\binits{N.}} \AND
  \bauthor{\bsnm{Watanabe},~\bfnm{Sh.}\binits{S.}}
(\byear{1981}).
\btitle{Stochastic differential equations and diffusion processes}.
\bpublisher{North-Holland Mathematical Library}, \baddress{Amsterdam}.
\bmrnumber{0637061}
\end{bbook}
\endbibitem

\bibitem[\protect\citeauthoryear{Jacod and Shiryaev}{2003}]{Jacod:2003}
\begin{bbook}[author]
\bauthor{\bsnm{Jacod},~\bfnm{J.}\binits{J.}} \AND
  \bauthor{\bsnm{Shiryaev},~\bfnm{A.~N.}\binits{A.~N.}}
(\byear{2003}).
\btitle{Limit theorems for stochastic processes},
\bedition{2nd} ed.
\bpublisher{Springer-Verlag}, \baddress{Berlin}.
\bmrnumber{1943877}
\end{bbook}
\endbibitem

\bibitem[\protect\citeauthoryear{Kallenberg}{2002}]{Kallenberg:2002}
\begin{bbook}[author]
\bauthor{\bsnm{Kallenberg},~\bfnm{O.}\binits{O.}}
(\byear{2002}).
\btitle{Foundations of modern probability},
\bedition{2nd} ed.
\bpublisher{Springer}, \baddress{New York}.
\bmrnumber{1876169}
\end{bbook}
\endbibitem

\bibitem[\protect\citeauthoryear{Karlikova}{2005}]{Karlikova:2005}
\begin{barticle}[author]
\bauthor{\bsnm{Karlikova},~\bfnm{M.~P.}\binits{M.~P.}}
(\byear{2005}).
\btitle{On a weak solution of an equation for an evolution flow with
  interaction}.
\bjournal{Ukrainian Math. J}
\bvolume{57}
\bpages{1055-1065}.
\bmrnumber{2214482}
\end{barticle}
\endbibitem

\bibitem[\protect\citeauthoryear{Konarovskii}{2011}]{Konarovskiy:2011:TSP}
\begin{barticle}[author]
\bauthor{\bsnm{Konarovskii},~\bfnm{V.~V.}\binits{V.~V.}}
(\byear{2011}).
\btitle{The martingale problem for a measure-valued process with heavy
  diffusion particles}.
\bjournal{Theory Stoch. Process.}
\bvolume{17}
\bpages{50-60}.
\bmrnumber{3076876}
\end{barticle}
\endbibitem

\bibitem[\protect\citeauthoryear{Konarovskiy}{2010}]{Konarovskiy:2010:UMJ:en}
\begin{barticle}[author]
\bauthor{\bsnm{Konarovskiy},~\bfnm{V.~V.}\binits{V.~V.}}
(\byear{2010}).
\btitle{System of sticking diffusion particles of variable mass}.
\bjournal{Ukrainian Math. J.}
\bvolume{62}
\bpages{97-113}.
\bmrnumber{2888581}
\end{barticle}
\endbibitem

\bibitem[\protect\citeauthoryear{Konarovskiy}{2011}]{Konarovskiy:2010:TVP:en}
\begin{barticle}[author]
\bauthor{\bsnm{Konarovskiy},~\bfnm{V.~V.}\binits{V.~V.}}
(\byear{2011}).
\btitle{On infinite system of diffusing particles with coalescing}.
\bjournal{Theory Probab. Appl.}
\bvolume{55}
\bpages{134-144}.
\bmrnumber{2768524}
\end{barticle}
\endbibitem

\bibitem[\protect\citeauthoryear{Konarovskyi}{2013}]{Konarovskyi:2013:COSA}
\begin{barticle}[author]
\bauthor{\bsnm{Konarovskyi},~\bfnm{V.~V.}\binits{V.~V.}}
(\byear{2013}).
\btitle{Heavy diffusion particles system with drift}.
\bjournal{Commun. Stoch. Anal.}
\bvolume{7}
\bpages{591-605}.
\bmrnumber{3213281}
\end{barticle}
\endbibitem

\bibitem[\protect\citeauthoryear{Konarovskyi}{2014}]{Konarovskyi:2014:TSP}
\begin{barticle}[author]
\bauthor{\bsnm{Konarovskyi},~\bfnm{V.~V.}\binits{V.~V.}}
(\byear{2014}).
\btitle{Large deviations principle for finite system of heavy diffusion
  particles}.
\bjournal{Theory Stoch. Process.}
\bvolume{19(35)}
\bpages{37-45}.
\bmrnumber{3337132}
\end{barticle}
\endbibitem

\bibitem[\protect\citeauthoryear{Lang and Xanh}{1980}]{Lang:1980}
\begin{barticle}[author]
\bauthor{\bsnm{Lang},~\bfnm{R.}\binits{R.}} \AND
  \bauthor{\bsnm{Xanh},~\bfnm{Ng.~X.}\binits{N.~X.}}
(\byear{1980}).
\btitle{Smoluchowski's theory of coagulation in colloids holds rigorously in
  the Boltzmann-Grad-limit}.
\bjournal{Probab. Theory Related Fields}
\bvolume{54}
\bpages{227-280}.
\bmrnumber{0602510}
\end{barticle}
\endbibitem

\bibitem[\protect\citeauthoryear{Le~Jan and Raimond}{2004}]{Le_Jan:2004}
\begin{barticle}[author]
\bauthor{\bsnm{Le~Jan},~\bfnm{Y.}\binits{Y.}} \AND
  \bauthor{\bsnm{Raimond},~\bfnm{O.}\binits{O.}}
(\byear{2004}).
\btitle{Flows, coalescence and noise}.
\bjournal{Ann. Probab.}
\bvolume{32}
\bpages{1247-1315}.
\bmrnumber{2060298}
\end{barticle}
\endbibitem

\bibitem[\protect\citeauthoryear{Liptser and Shiryaev}{2001}]{Liptser:2001}
\begin{bbook}[author]
\bauthor{\bsnm{Liptser},~\bfnm{R.~S.}\binits{R.~S.}} \AND
  \bauthor{\bsnm{Shiryaev},~\bfnm{A.~N.}\binits{A.~N.}}
(\byear{2001}).
\btitle{Statistics of random processes. I. General theory}.
\bpublisher{Springer-Verlag}, \baddress{Berlin}.
\bmrnumber{1800857}
\end{bbook}
\endbibitem

\bibitem[\protect\citeauthoryear{Malovichko}{2009}]{Malovichko:2009}
\begin{barticle}[author]
\bauthor{\bsnm{Malovichko},~\bfnm{T.~V.}\binits{T.~V.}}
(\byear{2009}).
\btitle{Girsanov theorem for stochastic flows with interaction}.
\bjournal{Ukrainian Math. J.}
\bvolume{61}
\bpages{435-456}.
\bmrnumber{2562218}
\end{barticle}
\endbibitem

\bibitem[\protect\citeauthoryear{Norris}{1999}]{Norris:1999}
\begin{barticle}[author]
\bauthor{\bsnm{Norris},~\bfnm{J.~R}\binits{J.~R.}}
(\byear{1999}).
\btitle{Smoluchowski's coagulation equation: uniqueness, nonuniqueness and a
  hydrodynamical limit for the stochastic coalescent}.
\bjournal{Ann. Appl. Probab.}
\bvolume{9}
\bpages{78-109}.
\bmrnumber{1682596}
\end{barticle}
\endbibitem

\bibitem[\protect\citeauthoryear{Norris and Turner}{2015}]{Norris:2015}
\begin{barticle}[author]
\bauthor{\bsnm{Norris},~\bfnm{James}\binits{J.}} \AND
  \bauthor{\bsnm{Turner},~\bfnm{Amanda}\binits{A.}}
(\byear{2015}).
\btitle{Weak convergence of the localized disturbance flow to the coalescing
  {B}rownian flow}.
\bjournal{Ann. Probab.}
\bvolume{43}
\bpages{935--970}.
\bmrnumber{3342655}
\end{barticle}
\endbibitem

\bibitem[\protect\citeauthoryear{Piterbarg}{1998}]{Piterbarg:1998}
\begin{barticle}[author]
\bauthor{\bsnm{Piterbarg},~\bfnm{V.~V.}\binits{V.~V.}}
(\byear{1998}).
\btitle{Expansions and contractions of isotropic stochastic flows of
  homeomorphisms}.
\bjournal{Ann. Probab.}
\bvolume{26}
\bpages{479-499}.
\bmrnumber{1626162}
\end{barticle}
\endbibitem

\bibitem[\protect\citeauthoryear{Ruelle}{1969}]{Ruelle:1969}
\begin{bbook}[author]
\bauthor{\bsnm{Ruelle},~\bfnm{D.}\binits{D.}}
(\byear{1969}).
\btitle{Statistical mechanics: Rigorous results}.
\bpublisher{W. A. Benjamin, Inc.}, \baddress{New York-Amsterdam}.
\bmrnumber{0289084}
\end{bbook}
\endbibitem

\bibitem[\protect\citeauthoryear{Shamov}{2011}]{Shamov:2011}
\begin{barticle}[author]
\bauthor{\bsnm{Shamov},~\bfnm{A.}\binits{A.}}
(\byear{2011}).
\btitle{Short-time asymptotics of one-dimensional Harris flows}.
\bjournal{Commun. Stoch. Anal.}
\bvolume{5}
\bpages{527-539}.
\bmrnumber{2840266}
\end{barticle}
\endbibitem

\bibitem[\protect\citeauthoryear{Smoluchowski}{1916}]{Smoluchowski:1916}
\begin{barticle}[author]
\bauthor{\bsnm{Smoluchowski},~\bfnm{M.~V.}\binits{M.~V.}}
(\byear{1916}).
\btitle{Drei Vortrage uber Diffusion, Brownsche Bewegung und Koagulation von
  Kolloidteilchen}.
\bjournal{Physik. Zeit.}
\bvolume{17}
\bpages{557-585}.
\end{barticle}
\endbibitem

\bibitem[\protect\citeauthoryear{Wang}{1997}]{Wang:1997}
\begin{barticle}[author]
\bauthor{\bsnm{Wang},~\bfnm{H.}\binits{H.}}
(\byear{1997}).
\btitle{State classification for a class of measure-valued branching diffusions
  in a Brownian medium}.
\bjournal{Probab. Theory Related Fields}
\bvolume{109}
\bpages{39-55}.
\bmrnumber{1469919}
\end{barticle}
\endbibitem

\bibitem[\protect\citeauthoryear{Wang}{1998}]{Wang:1998}
\begin{barticle}[author]
\bauthor{\bsnm{Wang},~\bfnm{H.}\binits{H.}}
(\byear{1998}).
\btitle{A class of measure-valued branching diffusions in a random medium}.
\bjournal{Stochastic Anal. Appl.}
\bvolume{16}
\bpages{753-786}.
\bmrnumber{1632574}
\end{barticle}
\endbibitem

\end{thebibliography}

\end{document}